\numberwithin{equation}{section}
\DeclarePairedDelimiter\abs{\lvert}{\rvert}%
\DeclarePairedDelimiter\norm{\lVert}{\rVert}%
\DeclarePairedDelimiter\babs{\biggl\lvert}{\biggr\rvert}%
\DeclarePairedDelimiter\bnorm{\biggl\lVert}{\biggr\rVert}%
\DeclarePairedDelimiter\bbnorm{\bigl\lVert}{\bigr\rVert}%
\let\oldabs\abs
\def\abs{\@ifstar{\oldabs}{\oldabs*}}
\let\oldnorm\norm
\def\norm{\@ifstar{\oldnorm}{\oldnorm*}}
\theoremstyle{plain}
\newtheorem{thm}{Theorem}[section]
\newtheorem{lem}[thm]{Lemma}
\newtheorem{prop}[thm]{Proposition}
\newtheorem{cor}[thm]{Corollary}
\theoremstyle{definition}
\theoremstyle{remark}
\newtheorem*{remark}{Remark}
\newtheorem*{notation}{Notation}
\newcommand\R{\mathbb{R}}
\newcommand\Rp{\R_+}
\newcommand\Z{\mathbb{Z}}
\newcommand\N{\mathbb{N}}
\newcommand\G{\mathcal{G}}
\newcommand\con{*_d}
\newcommand{\rad}{ _{\operatorname{rad}}}
\newcommand{\s}{\mathcal{S}}
\newcommand{\ben}{\begin{enumerate}[(i)]}
\newcommand{\een}{\end{enumerate}}
\newcommand{\ft}{\mathcal{F}}
\newcommand{\fto}{\mathcal{F}_\mathbb{R}}
\newcommand{\ftd}{\mathcal{F}_{\mathbb{R}^d}}
\newcommand{\ftn}{\mathcal{F}_{\mathbb{R}^n}}
\newcommand{\ift}{\mathcal{F}^{-1}_\mathbb{R}}
\newcommand{\fd}{D^{\vec{\alpha}}}
\newcommand{\supp}{\operatorname{supp}}
\newcommand{\inn}[1]{\langle #1 \rangle}
\newcommand{\h}{\mathcal{H}_{\ve{d}}}
\newcommand{\hh}{\mathcal{H}}
\newcommand{\han}{{\mathcal{H}_d}}
\newcommand{\lp}{L^p(\mu_d,H)}
\newcommand{\lpl}{L^{p,2}(\mu_d,H)}
\newcommand{\lpt}{{L^{p}(\mu_d,L^2_t(H))}}
\newcommand{\pare}[2]{\biggl( #1 \biggr) ^{1/#2} } 
\newcommand{\ve}[1]{\vec{#1}\,}
\newcommand{\les}{\lesssim}
\begin{document}
\title[Square functions associated with Hankel multipliers]{Endpoint bounds of square functions associated with Hankel multipliers}
\subjclass[2000]{Primary: 42B15; Secondary: 42B25}
\keywords{Square functions, Hankel multipliers}
\author{Jongchon Kim}
\thanks{Supported in part by NSF grant 1201314} 
\address{Department of Mathematics, University of Wisconsin-Madison, Madison,
WI,
53706 USA}
\email{jkim@math.wisc.edu}
\begin{abstract} We prove endpoint bounds for the square function associated with radial Fourier multipliers acting on $L^{p}$ radial functions. This is a consequence of endpoint bounds for a corresponding square function for Hankel multipliers. We obtain a sharp Marcinkiewicz-type multiplier theorem for multivariate Hankel multipliers and $L^p$ bounds of maximal operators generated by Hankel multipliers as corollaries. The proof is built on techniques developed by Garrig\'{o}s and Seeger for characterizations of Hankel multipliers.
\end{abstract}
\maketitle

\section{Introduction} \label{sec:intro}
Let $S^\lambda_t$ be the Bochner-Riesz mean of order $\lambda>0$ defined by
\begin{equation*}
\ft[S^{\lambda}_t f] (\xi) = \biggl(1-\frac{|\xi|^2}{t^2}\biggr)^\lambda_+ \ft f(\xi)
\end{equation*}
for $t>0$, where $\ft$ denotes the Fourier transform $\ft f(\xi) = \int_{\R^d} f(x)e^{ix\cdot \xi} dx$. One is interested in the convergence $S^\lambda_t f \to f$ as $t \to \infty$ in various sense. In this regard, $L^p$ estimates of $S^\lambda := S^\lambda_1$ and the maximal operator $S_*^\lambda f(x) = \sup_{t>0} |S_t^\lambda f(x)|$ have been studied extensively. For $\lambda$ below the critical index $\frac{d-1}{2}$, it is conjectured that $S^\lambda$ is bounded on $L^p$ if and only if $\frac{2d}{d+1+2\lambda}<p<\frac{2d}{d-1-2\lambda}$ and
that $S^\lambda_*$ is bounded for the same $p$-range for $p\geq 2$. 

Although the conjectures remain open in the full $p$-range for $d\geq 3$, they are indeed theorems for $d=2$ by Carleson and Sj\"{o}rin \cite{CaSj} and Carbery \cite{CarM}, respectively. In addition, the work by Carbery, Gasper, and Trebels \cite{CGT} and Carbery \cite{CarR} shows that the results for $d=2$ are consequences of more general multiplier theorems which apply to all radial Fourier multipliers. This involves the square function $G^\alpha$,
\begin{equation*}
G^\alpha f(x) = \biggl(\int_0^\infty |R^\alpha_t f(x)|^2 \frac{dt}{t}\biggr)^{1/2},
\end{equation*}
where $\ft[R^\alpha_t f](\xi) = \frac{|\xi|}{t}(1-\frac{|\xi|}{t})_+^{\alpha-1} \ft f(\xi)$. The square function (with $S^\alpha_t f - S^{\alpha-1}_t f$ in place of $R^\alpha_t f$) was introduced by Stein \cite{St} for the $L^2$-bound of $S^\lambda_*$ for $\lambda>0$.

Let $m$ be a bounded function on $\R_+:=(0,\infty)$ and $\mathcal{T}_m$ be the operator defined by,
\begin{equation*}
\ft [\mathcal{T}_m f] (\xi) = m(|\xi|) \ft f(\xi).
\end{equation*}
Then for $\alpha>1/2$ and a fixed non-trivial smooth function $\phi$ supported on $[1,2]$, there is a pointwise estimate 
\begin{equation}\label{eqn:pom}
g[\mathcal{T}_m f](x) \leq C \sup_{t>0} \norm{m(t\cdot) \phi}_{L^2_\alpha(\R)} G^\alpha f(x),
\end{equation}
where $g$ is a standard Littlewood-Paley square function and $L^2_\alpha(\R)$ is the $L^2$-Sobolev space. Since $\norm{g(\mathcal{T}_mf)}_{L^p(\R^d)}$ is comparable to $\norm{\mathcal{T}_mf}_{L^p(\R^d)}$ for $1<p<\infty$, \eqref{eqn:pom} reduces $L^p$ estimates of $\mathcal{T}_m$ to the study of $G^\alpha$, which is independent of a specific multiplier $m$. Moreover, it was shown by \cite{CarR} that $G^\alpha$ controls maximal operator generated by $\mathcal{T}_m$ by a pointwise estimate, which gives effective $L^p$ bounds for maximal functions $S^\lambda_*$ when $p\geq 2$. We refer the reader to \cite{LRS} for an excellent overview of various square functions. 

For $1<p\leq 2$, it is known that $G^\alpha$ is bounded on $L^p$ if and only if $\alpha > d(\frac{1}{p}-\frac{1}{2})+\frac{1}{2}$ (see \cite{Su}). On the other hand, in order for $G^\alpha$ to be bounded on $L^p(\R^d)$ for $p>2$, the condition $\alpha > \max(d(\frac{1}{2}-\frac{1}{p}),\frac{1}{2})$ is necessary, and is conjectured to be sufficient. The conjecture for $d=2$ was verified by Carbery \cite{CarM, CarW}, yielding the $L^4$ bound for $S^\alpha_*$ as a corollary. For higher dimensions, the conjecture has been verified for $p>\frac{2(d+2)}{d}$ in \cite{LRSi} (See also \cite{C, Se}). Furthermore, $L^{p,2} \to L^{p}$ endpoint estimates for the critical $\alpha = d(\frac{1}{2}-\frac{1}{p})$ for a smaller $p$-range, $p>\frac{2(d+1)}{d-1}$, were obtained in \cite{LRS}, where $L^{p,q}$ denotes the Lorentz space.

We show that, on the subspace of radial functions, the endpoint estimate is valid for the conjectured $p$-range.
\begin{thm} \label{thm:Euc} Let $d\geq2$, $\frac{2d}{d-1} < p<\infty$ and $\alpha = d(\frac{1}{2}-\frac{1}{p})>\frac{1}{2}$. Then 
\begin{equation*}
\norm{G^\alpha f}_{L^p \rad (\R^d) } \leq C \norm{f}_{L^{p,2} \rad (\R^d)}.
\end{equation*}
\end{thm}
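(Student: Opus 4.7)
The plan is to exploit radial symmetry to recast the estimate as an endpoint $L^{p,2}\to L^{p}$ bound for a square function associated with Hankel multipliers, and to establish the latter along the lines of Garrig\'os and Seeger.

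\emph{Reduction to the Hankel setting.} On radial functions the Fourier transform coincides up to normalization with the Hankel transform of order $\nu=(d-2)/2$, so for $f(x)=f_0(|x|)$ one has $\|f\|_{L^p(\R^d)}\simeq \|f_0\|_{L^p(r^{d-1}dr)}$ and similarly for the Lorentz norms. Conjugating $G^\alpha$ by this identification produces a square function
\[
\mathcal{G}^\alpha f_0(s) = \biggl( \int_0^\infty \abs{\mathcal{H}_\nu[m_t \cdot \mathcal{H}_\nu f_0](s)}^2 \frac{dt}{t} \biggr)^{\!1/2},
\]
where $\mathcal{H}_\nu$ denotes the Hankel transform and $m_t(\rho) = (\rho/t)(1-\rho/t)_+^{\alpha-1}$. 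Theorem~A then reduces to the one-dimensional bound $\|\mathcal{G}^\alpha f_0\|_{L^p(r^{d-1}dr)} \leq C\|f_0\|_{L^{p,2}(r^{d-1}dr)}$, and the remainder of the argument is purely one-dimensional.

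\emph{Dyadic decomposition and kernel estimates.} I would split $m_t=\sum_{k\geq 0} 2^{-k(\alpha-1)} m_t^{(k)}$, where each $m_t^{(k)}$ is smooth and supported in $\{|1-\rho/t|\sim 2^{-k}\}$ (plus a harmless piece near the origin), so that $\mathcal{G}^\alpha$ is controlled by a weighted sum of square functions $\mathcal{G}^\alpha_k$. Using Bessel-function asymptotics, the Hankel kernels of $\mathcal{G}^\alpha_k$ with respect to $r^{d-1}dr$ exhibit essentially Euclidean oscillatory behaviour with spatial localization at scale $t/2^k$; this kind of kernel analysis is the backbone of the Garrig\'os--Seeger characterization of Hankel multipliers and may be imported here.

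\emph{Atomic bound on $L^{p,2}$.} To accommodate the Lorentz domain I would decompose $f_0=\sum_j c_j a_j$ into atoms $a_j$ supported on sets $E_j$ of finite measure with $\|a_j\|_\infty\leq |E_j|^{-1/p}$ and $(\sum_j|c_j|^2)^{1/2}\lesssim \|f_0\|_{L^{p,2}}$. Since $p>2$, the theorem follows from a uniform atomic bound $\|\mathcal{G}^\alpha a\|_{L^p(r^{d-1}dr)}\lesssim 1$, via $\ell^2\hookrightarrow\ell^p$ summation that exploits approximate disjointness of the fattened supports of the outputs. For each piece $\mathcal{G}^\alpha_k$ I would interpolate a trivial $L^2$ bound against an $L^2\to L^p$ bound on the complement of the $(t/2^k)$-neighborhood of $\supp a$, obtained from the kernel decay; at the critical $\alpha=d(1/2-1/p)$ the resulting estimates exactly counterbalance the weight $2^{-k(\alpha-1)}$, and the geometric series in $k$ converges.

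\emph{Main obstacle.} The principal difficulty is establishing the atomic $L^p$ bound at the critical smoothness with constants summable in $k$. The Hankel kernel of $\mathcal{G}^\alpha_k$ has a delicate transition between non-oscillatory behaviour near the origin and oscillatory behaviour elsewhere, and the dyadic pieces do not decouple through a clean Littlewood--Paley orthogonality as in the Euclidean multiplier theory. Extracting the precise gain from the $dt/t$-integration inherent to the square function while handling both kernel regimes uniformly is the decisive technical step, and is precisely where the Garrig\'os--Seeger techniques, adapted from the $L^p\to L^p$ multiplier setting to the square-function endpoint setting, become essential.
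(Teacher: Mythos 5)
Your proposal reduces to the Hankel setting exactly as the paper does, but from there it diverges substantially from the paper's route. The paper first dualizes: it reduces Theorem~\ref{thm:Euc} to an estimate of the form
$\|(\sum_j|\mathcal{R}^\alpha_j g_j|^2_H)^{1/2}\|_{L^{p',2}(\mu_d)} \les \|(\sum_j|g_j|^2)^{1/2}\|_{L^{p'}(\mu_d)}$
with $p'\in(1,\tfrac{2d}{d+1})$, so that the Lorentz norm lands on the \emph{output} side, where it can be handled by the Minkowski-type lemma for $L^{p,2}$ from \cite{GaSe}. It then performs a \emph{spatial} dyadic decomposition of $\mathcal{R}^\alpha_j$ into near-field ($S$), origin-side ($E$), and far-field ($V$) pieces, proving separate $L^{p'}\to L^{p'}$, $L^{p',\infty}\to L^{p',2}$, and weak $(1,1)$ estimates; crucially the multiplier $(1-\rho/t)^{\alpha-1}_+\chi(\rho/t)$ is kept intact, and the power $|u|^{-\alpha}$ in the asymptotic \eqref{eqn:kappa} supplies the full critical decay at once. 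Your proposal instead works undualized, replaces the spatial kernel split by a frequency-side Bochner--Riesz decomposition $m_t=\sum_k 2^{-k(\alpha-1)}m_t^{(k)}$, and invokes an atomic decomposition of the $L^{p,2}$ datum. That is closer in spirit to the Euclidean square-function endpoint strategy in \cite{LRS} than to \cite{GaSe}. Neither route is wrong in principle, but the paper's dualization neatly sidesteps two problems you run into.

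First, your summation claim contains an internal contradiction: if the atomic estimates for the pieces $\mathcal{G}^\alpha_k$ ``exactly counterbalance the weight $2^{-k(\alpha-1)}$,'' the $k$-sum is $\sum_k O(1)$ and \emph{does not} converge. You are correct that the extra gain must come from the $dt/t$-integration and cross-scale orthogonality, but that gain is precisely what you have not extracted, and it is not a minor detail -- it is the content of the theorem at the critical $\alpha$. Second, the step ``$\ell^2\hookrightarrow\ell^p$ summation exploiting approximate disjointness of the fattened supports of the outputs'' is not available as stated: $\mathcal{G}^\alpha a_j$ has unbounded support, and the far-field tails of different atoms overlap everywhere. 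To salvage this you would have to split $\mathcal{G}^\alpha a_j$ into a local part (where disjointness of the $E_j$ can be used) and a global tail controlled by kernel decay, effectively re-deriving the paper's spatial decomposition in disguise. Finally, a small but consequential slip: a multiplier localized at frequency scale $t2^{-k}$ produces a kernel spread over spatial scale $\sim 2^k/t$, not $t/2^k$, so the ``$(t/2^k)$-neighborhood'' in your interpolation scheme is on the wrong side of the uncertainty principle; this affects the exponent bookkeeping in the far-field estimate.
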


This implies a radial version of the conjecture for $G^\alpha$ by real interpolation. As a consequence, one may obtain a new proof of the sharp estimate for radial Fourier multipliers acting on radial functions in terms of Sobolev spaces (see \cite{GaTr}). A much stronger result is known. Garrig\'{o}s and Seeger \cite{GaSe} obtained a necessary and sufficient condition for $L^p_{rad}(\R^d)$ boundedness of $\mathcal{T}_m$ for $1<p<\frac{2d}{d+1}$. We note that our proof of Theorem \ref{thm:Euc} is based on \cite{GaSe}. 

Let $M_m f := \sup_{t>0} |\mathcal{T}_{m(|t\cdot|)} f|$, where we additionally assume that $m$ is compactly supported in $(0,\infty)$. For the range $1<p<\frac{2d}{d+1}$, a necessary and sufficient condition for $L^p_{rad}(\R^d)$ boundedness of $M_m$ is known (see \cite{GaSeM}). By Theorem \ref{thm:Euc}, we may obtain a sharp sufficient condition for $L^p_{rad}$ boundedness of $M_m$ in terms of Sobolev spaces for $2 \leq p < \infty$ (see Corollary \ref{cor:max}). 

Our primary motivation for Theorem \ref{thm:Euc} comes from a more general situation when multipliers and functions are assumed to be multi-radial. Let $n\in \N$ and $\ve{d}=(d_1,\cdots,d_n)\in \N^n$. We say that $f$ is $\ve{d}$-radial if there is a function $f_0$ on $(0,\infty)^n$ such that $f(x_1,\cdots,x_n) = f_0(|x_1|, \cdots,|x_n|)$, where $x_j \in \R^{d_j}$. In this case, we say that $f$ is the $\ve{d}$-radial extension of $f_0$.

In this paper, we are interested in the Fourier multiplier transformation given by a $\ve{d}$-radial multiplier $m$ acting on $\ve{d}$-radial functions. A typical $m$ would be a tensor product of radial multipliers. In that case, one may easily obtain $L^p$ bounds by iteration. Unfortunately, this argument fails for general $m$. Nevertheless, it is easy to iterate Theorem \ref{thm:Euc} to obtain estimates for product square functions. As a consequence, we obtain sharp Marcinkiewicz type multiplier theorems for the $\ve{d}$-radial case. This shall be carried out in the multivariate Hankel multiplier setting, which improves a result of Wr\'{o}bel \cite{Wr} (see Theorem \ref{thm:gtm}). 

Here we state a special case of Theorem \ref{thm:gtm}. Let us denote by $\R^{\vec{d}}$ and ${L^p_{rad}(\R^{\vec{d}})}$ the product space $\R^{d_1} \times \cdots \times \R^{d_n}$ and the subspace of $\ve{d}$-radial functions in $L^p(\R^{\vec{d}})$, respectively. Let $\phi$ be a tensor product of $n$ non-trivial smooth functions supported on the interval $[1,2]$. It would be convenient to define the subspace ${L^2_{loc,\ve{\alpha}}(\R^n)}$ of $L^2_{loc}(\R^n)$ equipped with the norm
\begin{equation*}
\norm{m_0}_{L^2_{loc,\ve{\alpha}}}^2 := \sup_{\vec{t}\in (0,\infty)^n} \int_{\R^n} |\ft_{\R^n}[\phi m_0(\vec{t}\cdot)](\xi)|^2 \prod_{j=1}^n (1+|\xi_j|)^{2\alpha_j} d\xi,
\end{equation*}
where ($\vec{t}\cdot)$ denotes the $n$-parameter dilation $(t_1\cdot,\cdots,t_n\cdot)$.
\begin{thm}
Let $1<p<\infty$ and $\vec{d}=(d_1,\cdots,d_n)$ such that $d_j\geq 2$. Assume that $m$ is the $\vec{d}$-radial extension of a bounded function $m_0\in {L^2_{loc,\ve{\alpha}}(\R^n)}$ for some $\vec{\alpha} = (\alpha_1,\cdots,\alpha_n)$ such that $\alpha_j > \max(\frac{1}{2}, d_j|\frac{1}{p}-\frac{1}{2}|)$ for $1\leq j\leq n$.
Then 
\begin{equation*}
\norm{\ft^{-1} [m \ft f]}_{L^p(\R^{\vec{d}})} \leq C \norm{m_0}_{L^2_{loc,\ve{\alpha}}} \norm{f}_{L^p_{rad}(\R^{\vec{d}})}.
\end{equation*}
\end{thm}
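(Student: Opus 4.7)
The plan is to reduce the statement, via a pointwise domination by an iterated square function, to an $L^p$ estimate that I then obtain by iterating the radial endpoint Theorem~\ref{thm:Euc}. I would first set up an $n$-parameter Littlewood--Paley decomposition $g^{\otimes n}$ formed as the tensor product of standard dyadic pieces in each group of variables $x_j \in \R^{d_j}$, which enjoys the two-sided bound $\norm{g^{\otimes n} F}_{L^p(\R^{\vec d})} \sim \norm{F}_{L^p(\R^{\vec d})}$ for $1<p<\infty$, together with the $n$-parameter version $G^{\vec\alpha}$ of $G^\alpha$ obtained by applying $G^{\alpha_j}$ in the $j$-th slot of variables.

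Next, iterating the one-parameter pointwise estimate \eqref{eqn:pom} one slot at a time, with the supremum over $\vec t \in (0,\infty)^n$ in the definition of $\norm{m_0}_{L^2_{loc,\vec\alpha}}$ providing the uniformity needed to pass from one variable to the next, should yield
\begin{equation*}
g^{\otimes n}\bigl[\ft^{-1}(m\ft f)\bigr](x) \leq C \norm{m_0}_{L^2_{loc,\vec\alpha}} \, G^{\vec\alpha} f(x).
\end{equation*}
Taking $L^p$ norms and using the Littlewood--Paley equivalence reduces the theorem to the $n$-parameter square-function bound $\norm{G^{\vec\alpha} f}_{L^p(\R^{\vec d})} \les \norm{f}_{L^p\rad(\R^{\vec d})}$.

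I would establish this bound by iterating Theorem~\ref{thm:Euc} one variable at a time. For $p\geq 2$, real interpolation between the Lorentz-space endpoint of Theorem~\ref{thm:Euc} and a trivial $L^2$ bound gives $G^{\alpha_j}\colon L^p\rad(\R^{d_j}) \to L^p\rad(\R^{d_j})$ whenever $\alpha_j > \max(1/2, d_j(1/2 - 1/p))$. The range $1 < p < 2$ should follow from the $p>2$ case by duality in the vector-valued setting $L^p(\ell^2)$, exchanging $G^{\alpha_j}$ with its reconstruction adjoint. Iteration across the $n$ slots is then a routine Fubini argument, in which the square integrations over the auxiliary parameters $(t_i)_{i\neq j}$ are absorbed into a Hilbert space $H$.

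The main obstacle is therefore to obtain a Hilbert-space-valued extension of Theorem~\ref{thm:Euc}: an estimate $\norm{G^\alpha F}_{L^p\rad(\R^d;H)} \les \norm{F}_{L^{p,2}\rad(\R^d;H)}$ with constant independent of $H$, which is what the iteration requires. Since the Garrig\'os--Seeger machinery underlying Theorem~\ref{thm:Euc} is linear in the input and does not couple the square-integration parameter with auxiliary Hilbert directions, it should transfer to the $H$-valued setting once the orthogonality, atomic decomposition, and Lorentz-space endpoint arguments are rewritten for $H$-valued radial functions; verifying this compatibility is the technical heart of the reduction, after which the iteration through the $n$ variables is routine.
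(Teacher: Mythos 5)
Your reduction for $p\geq 2$ is essentially the paper's: the $n$-parameter pointwise bound \eqref{eqn:pomprod} combined with the product square-function estimate (Theorem~\ref{thm:squareprod}), which is indeed obtained by iterating the $H$-valued endpoint Theorem~\ref{thm:squareend}. On that point, the ``obstacle'' you flag is already resolved in the paper: Theorem~\ref{thm:squareend} is stated for $H$-valued $f$, so no separate Hilbert-space-valued extension of Theorem~\ref{thm:Euc} is needed — the iteration in Theorem~\ref{thm:squareprod} goes through by the Fubini argument you describe.

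The gap is in the range $1<p<2$. You propose to reduce the whole theorem to $\norm{G^{\vec\alpha}f}_{L^p}\les\norm{f}_{L^p_{\mathrm{rad}}}$ for all $1<p<\infty$ and then obtain the sub-$L^2$ case ``by duality in the vector-valued setting $L^p(\ell^2)$, exchanging $G^{\alpha_j}$ with its reconstruction adjoint.'' This cannot work: $\G^{\alpha}$ (equivalently $G^\alpha$ on radial functions) is \emph{not} bounded on $L^p(\mu_d)$ for $1<p<2$ under the hypothesis $\alpha>\alpha(d,p)=\max(\tfrac12,d(\tfrac1p-\tfrac12))$. As recorded in Section~\ref{sec:Hankel}, the sharp threshold for $1<p\le 2$ is $\alpha>d(\tfrac1p-\tfrac12)+\tfrac12$, which is strictly larger than $\alpha(d,p)$ in the entire range. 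Duality does not rescue the square-function bound itself; it only identifies the $L^p\to L^p(L^2)$ bound of the linearized operator $f\mapsto\{R^\alpha_t f\}$ with the $L^{p'}(L^2)\to L^{p'}$ bound of the reconstruction operator, which is a different statement from the $L^{p'}$ bound of $G^\alpha$. The correct place to dualize is the multiplier operator: $T_m$ on $L^p(\mu_{\vec d})$ is adjoint (up to conjugation of $m$) to $T_{\overline m}$ on $L^{p'}(\mu_{\vec d})$, the localized Sobolev norm in the hypothesis is invariant under conjugation, and $\vec\alpha(\vec d,p)=\vec\alpha(\vec d,p')$, so the $p<2$ case follows directly from the $p'>2$ case already proved. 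As written, your reduction to an $L^p$ bound for $G^{\vec\alpha}$ with $1<p<2$ asks for something false, so the argument must be reorganized to apply duality to $T_m$ rather than to the square function.
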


The paper is organized as follows. In Section \ref{sec:Hankel}, we formulate Theorem \ref{thm:Euc} in a slightly more general context in terms of a square function associated with Hankel multipliers. In Section \ref{sec:mul}, we shall extend the results in Section \ref{sec:Hankel} to multivariate Hankel multipliers. We shall include an application to Bochner-Riesz type multipliers. In Section \ref{sec:product}, product square functions are discussed. Pointwise estimates for multivariate Hankel multiplier transformations in terms of the product square functions are obtained, which leads to multiplier theorems. The rest of the paper shall be devoted to the proof of Theorem \ref{thm:squareend}, which is slightly more general than Theorem \ref{thm:Euc}. In Appendix, we give a proof of $L^p$ bounds of a Littlewood-Paley square function considered in this paper.

In what follows, we shall frequently write $A\les B$ if $A \leq C B$ for some universal implicit constant $C$ which may depend on parameters including $n,p,\vec{d}$, and $\vec{\alpha}$.

\section{Hankel multipliers: Single variable case}\label{sec:Hankel}
Consider a radial function $F$ on $\R^d$, such that $F(x) = f(|x|)$ for a function $f$ on $\R_+:=(0,\infty)$. It is well-known that the Fourier transform of $F$ can be expressed by an integral transform of $f$ which involves the Bessel function. Indeed, one has $\ftd[F](\xi) = (2\pi)^d \han f(|\xi|)$. Here, $\han$ is the modified Hankel transform defined by
\begin{equation*}
\han f(s) = \int_0^\infty B_d(sr) f(r) d\mu_d(r),
\end{equation*}
where $B_d(x) = x^{-\frac{d-2}{2}} J_{\frac{d-2}{2}} (x)$, $J_\alpha$ denotes the standard Bessel function of order $\alpha$, and $\mu_d$ is the measure on $\R_+$ given by $d\mu_d(r)=r^{d-1}dr$ (See \cite{StWe}). In what follows, we shall let $d$ be a real number greater than $1$.

The operator $\han$ enjoys many properties analogous to those of the Fourier transform including the inversion formula and Plancherel's theorem. Let $\s(\R_+)$ be the space of (restrictions to $\R_+$ of) even Schwartz functions on $\R$. Then $\han$ is an isomorphism on $\s(\R_+)$ and an isometry of $L^2(\mu_d)$ with $\han^{-1} = \han$. 

We are now ready to define a variant of the square function $G^\alpha$ relevant to Hankel multipliers. We shall work with $H$-valued functions $f$ on $\R_+$, where $H$ is a separable Hilbert space, for an iteration argument to be used later in Section \ref{sec:proofgtm}. We define the square function $\mathcal{G}^\alpha$ by
\begin{equation*} 
\mathcal{G}^\alpha f (r) = \biggl( \int_0^\infty \abs{ R^\alpha_t f(r) }_H^2 \frac{dt}{t} \biggr)^{1/2},
\end{equation*}
where $\han[R^\alpha_t f](\rho) = \frac{\rho}{t}(1-\frac{\rho}{t})^{\alpha-1}_+ \han f(\rho)$ for $\alpha>1/2$. 

For $1<p\leq 2$, $\mathcal{G}^\alpha$ is bounded on $L^p(\mu_d)$ if and only if $\alpha > d(\frac{1}{p}-\frac{1}{2})+\frac{1}{2}$. The proof is essentially contained in the proof of $L^p(\R^d)$ bounds of $G^\alpha$. For $2\leq p<\infty$, one may verify that $\G^\alpha$ is bounded on $L^p(\mu_d)$ only if $\alpha > \alpha(d,p)$, where
\begin{equation*}
\alpha(d,p) := \max \biggl(\frac{1}{2}, d\abs{\frac{1}{p}-\frac{1}{2}}\biggr).
\end{equation*}
This can be done, for instance, by examining its consequences (see e.g. Corollary \ref{cor:gt}). We show that the condition is also sufficient. 
\begin{thm} \label{thm:square} 
Let $d>1$ and $2 \leq p< \infty$. Then
\begin{equation*}
\norm{ \mathcal{G}^\alpha f}_{L^p(\mu_d)} \leq C \norm{f}_{L^p(\mu_d,H)}
\end{equation*}
if and only if $\alpha > \alpha(d,p)$.
\end{thm}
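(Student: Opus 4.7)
The plan is to treat necessity and sufficiency separately. Necessity has already been indicated in the paragraph preceding the theorem (by examining consequences such as Corollary~\ref{cor:gt}), so I focus on sufficiency, which I would deduce from two ingredients plus interpolation.

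The first ingredient is a direct $L^2$ estimate valid for all $\alpha > 1/2$. I would write $\G^\alpha f$ as the $L^2(dt/t;H)$-norm of $R^\alpha_t f$ and apply Fubini together with Plancherel for $\han$ to obtain
\begin{equation*}
\norm{\G^\alpha f}_{L^2(\mu_d)}^2 = \int_0^\infty \abs{\han f(\rho)}_H^2 \biggl( \int_0^\infty (\rho/t)^2 (1-\rho/t)^{2(\alpha-1)}_+ \frac{dt}{t} \biggr) d\mu_d(\rho).
\end{equation*}
The change of variables $s = \rho/t$ reduces the inner integral to the Beta integral $B(2, 2\alpha-1)$, which is finite precisely when $\alpha > 1/2$. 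This yields the $L^2(\mu_d, H) \to L^2(\mu_d)$ bound and also makes transparent why $\alpha(d,2) = 1/2$ is sharp.

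The second ingredient is the Lorentz endpoint estimate $\G^{\alpha_0} \colon L^{p_0,2}(\mu_d,H) \to L^{p_0}(\mu_d)$ at the critical index $\alpha_0 = d(\tfrac{1}{2}-\tfrac{1}{p_0})$ for each $p_0 > \tfrac{2d}{d-1}$, which is the forthcoming Theorem~\ref{thm:squareend} whose proof will occupy the rest of the paper. Since the smoothing parameter itself varies with $p$, I would combine the two estimates via Stein's complex interpolation, applied to the analytic family $\alpha \mapsto R^\alpha_t$ viewed as a linear operator from $L^p(\mu_d, H)$ into the vector-valued space $L^p(\mu_d, L^2(dt/t; H))$. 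The truncated-power multiplier $(1-s)^{\alpha-1}_+$ extends to the classical Gamma-regularized Riesz family, which has at most polynomial growth on vertical strips, so Stein's theorem applies. For any target $(p, \alpha)$ with $p > 2$ and $\alpha > \alpha(d,p)$, taking $p_0$ sufficiently close to $\max(p, \tfrac{2d}{d-1})$ from above and choosing $\alpha' \in (1/2, \alpha)$ suitably places $(1/p, \alpha)$ on the interpolation segment between $(1/p_0, \alpha_0)$ and $(1/2, \alpha')$, yielding the desired bound.

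The main obstacle is Theorem~\ref{thm:squareend} itself, whose proof demands the Garrig\'os--Seeger machinery developed in later sections. Within the present section, the only delicate point is the analytic continuation in $\alpha$, a classical construction that poses no new difficulty.
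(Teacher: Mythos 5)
The key observation you missed is that the interpolation can be carried out for a \emph{fixed} $\alpha$, interpolating a single operator in $p$, rather than deploying an analytic family in $\alpha$. When the paper says ``real interpolation,'' this is meant literally. Fix $1/2<\alpha<d/2$ and let $p_1 = 2d/(d-2\alpha)$, which is precisely the exponent at which $\alpha = d(\tfrac12-\tfrac1{p_1})$; note $p_1>2d/(d-1)$ because $\alpha>1/2$. Theorem~\ref{thm:squareend} then gives $\G^\alpha\colon L^{p_1,2}(\mu_d,H)\to L^{p_1}(\mu_d)$ for this same $\alpha$, and your Plancherel computation gives $\G^\alpha\colon L^2(\mu_d,H)\to L^2(\mu_d)$. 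Viewing $f\mapsto R^\alpha_\cdot f$ as a linear map into $L^p(\mu_d,L^2(dt/t;H))$ and applying real interpolation with second index $p$, one obtains $\G^\alpha\colon L^p\to L^p$ for all $2<p<p_1$, which is exactly the range $\alpha>\alpha(d,p)$. The remaining case $\alpha\ge d/2$ follows by the pointwise subordination $\G^\alpha f\lesssim \G^{\alpha'}f$ for $1/2<\alpha'<\alpha$, which comes from the fractional-integration identity $(1-s)^{\alpha-1}_+ = B(\alpha',\alpha-\alpha')^{-1}\int_0^1(v-s)_+^{\alpha'-1}(1-v)^{\alpha-\alpha'-1}\,dv$ together with Cauchy--Schwarz in $v$.

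Your Stein complex-interpolation route has two genuine obstructions. First, the endpoint bound is a Lorentz-space estimate $L^{p_0,2}\to L^{p_0}$, and the complex method does not produce Lorentz spaces cleanly; the real method does, via the $K$-functional. Second, an analytic family argument requires admissible bounds for the operators with \emph{complex} $\alpha$ along vertical lines (with controlled growth in $\Im\alpha$), but Propositions~\ref{prop:HE}--\ref{prop:S} and Theorem~\ref{thm:squareend} are established only for real $\alpha$; securing the needed estimates in the imaginary direction would require reworking the entire endpoint argument. Both difficulties vanish once you notice that $\alpha$ can be held fixed. Your direct $L^2$ Plancherel computation reducing to the Beta integral $B(2,2\alpha-1)$ is correct and matches the paper's stated $L^2$ bound for $\alpha>1/2$.
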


This result is obtained by real interpolation between the $L^2(\mu_d)$ bound for $\alpha>1/2$ and the following endpoint bounds.
\begin{thm} \label{thm:squareend} 
Let $d>1$, $\frac{2d}{d-1}<p<\infty$ and $\alpha = d(\frac{1}{2}-\frac{1}{p})>\frac{1}{2}$. Then
\begin{equation*}
\norm{ \mathcal{G}^\alpha f}_{L^p(\mu_d)} \leq C \norm{f}_{L^{p,2}(\mu_d,H)}.
\end{equation*}
\end{thm}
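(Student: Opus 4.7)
The plan is to adapt the Garrig\'os--Seeger framework \cite{GaSe} for Hankel multipliers to this square-function endpoint setting. Since $\mathcal{G}^\alpha f(r) = \norm{(R^\alpha_t f(r))_{t>0}}_{L^2(dt/t;\,H)}$, the desired bound is equivalent, by Lorentz duality, to the dual inequality
\begin{equation*}
\bnorm{\int_0^\infty (R^\alpha_t)^* g_t \tfrac{dt}{t}}_{L^{p',2}(\mu_d,H)} \les \norm{g}_{L^{p'}(\mu_d;\, L^2(dt/t;\,H))},
\end{equation*}
with $p' = p/(p-1) \in (1, \tfrac{2d}{d+1})$. This exponent range is precisely the one characterized in \cite{GaSe}, so the plan is to extract from their analysis enough structural information to handle the vector-valued, integral-in-$t$ version above.

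I would next perform a dyadic frequency decomposition at the singular edge $\rho = t$ of the Hankel symbol $m^\alpha_t(\rho) = \tfrac{\rho}{t}(1-\tfrac{\rho}{t})_+^{\alpha-1}$: writing $1 = \sum_{k\geq 0}\chi_k(1-\rho/t)$ with $\chi_k$ a smooth bump of scale $2^{-k}$, decompose $R^\alpha_t = \sum_k R^\alpha_{t,k}$, where $R^\alpha_{t,k}$ is a Hankel convolution with an annular symbol of width $\sim 2^{-k}t$ and amplitude $\sim 2^{-k\alpha}$. Inserting the oscillatory asymptotic $B_d(u) = u^{-(d-1)/2}(c_+ e^{iu} + c_- e^{-iu}) + O(u^{-(d+1)/2})$ into the Hankel inversion formula, the kernel $K^\alpha_{t,k}(r,\rho)$ splits into radial wave-packets with phases $e^{\pm it(r\pm\rho)}$, concentration scale $\sim 2^{-k}/t$, and amplitude bounded by $2^{-k\alpha}(tr\rho)^{-(d-1)/2}$. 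This is precisely the wave-packet structure exploited by Garrig\'os--Seeger.

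For each $k$, after a further dyadic localization in $t$, I would partition $(\Rp,\mu_d)$ into atoms at scale $2^{-k}/t$, decompose $g_t$ accordingly, and apply a Stein--Tomas-type restriction estimate for radial functions on $\R^d$ in its $L^{p'}\to L^{p',2}$ endpoint form at $p' = \tfrac{2d}{d+1}$, with rescaling and interpolation covering smaller $p'$. Combined with the amplitude factor $2^{-k\alpha}$, this should yield a scale-by-scale bound with a summable gain $2^{-k\varepsilon}$, provided $\alpha > 1/2$, which holds at the endpoint $\alpha = d(\tfrac{1}{2}-\tfrac{1}{p})$ for $p > \tfrac{2d}{d-1}$. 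Summing in $k$ then closes the argument. The main obstacle is the single-scale endpoint estimate: controlling the $L^{p',2}$-norm of the corresponding sum of wave-packets requires the restriction-type bound to be of the correct Lorentz type and to interact properly with the spatial atomic decomposition adapted to the Hankel geometry, and this is where the Garrig\'os--Seeger machinery is essential. The Hilbert-valued setting is not a conceptual difficulty: the target $L^2(dt/t;H)$ is compatible with both Lorentz interpolation and $L^{p'}$-valued duality, so the whole argument proceeds $H$-trivially.
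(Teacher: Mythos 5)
Your reductions (Littlewood--Paley localization in $t$ implicitly, then duality to $p'=2d/(d+2\alpha)\in(1,\tfrac{2d}{d+1})$) match the paper, but from there your route diverges and has a genuine gap at its core. You decompose dyadically in frequency at the edge $\rho=t$ and claim each annular piece contributes $O(2^{-k\varepsilon})$ "provided $\alpha>1/2$." At the critical index this is false: the piece at scale $2^{-k}$ carries amplitude $2^{-k(\alpha-1)}$ (not $2^{-k\alpha}$), the $t$-integration over an annulus of width $2^{-k}$ contributes $2^{-k/2}$, and the sharp radial bound for an annular multiplier of width $2^{-k}$ costs $2^{k(d(1/2-1/p)-1/2)}$; with $\alpha=d(\tfrac12-\tfrac1p)$ these factors multiply to exactly $O(1)$. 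That is what "critical" means, and it is consistent with the failure of the $L^p\to L^p$ bound at critical $\alpha$ noted in the Remark after Theorem \ref{thm:squareend}. So the geometric summation you invoke does not close; the entire difficulty of the endpoint is to replace it by a mechanism that exploits the Lorentz exponent $2$ across all scales simultaneously, and you have deferred precisely this to unspecified "Garrig\'os--Seeger machinery," which is not a restriction-theorem machinery at all.

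The paper's actual argument performs no frequency decomposition at the singular edge. After dualizing, it computes the kernel of the full operator $\mathcal{R}^\alpha_j$ at once: the Bessel asymptotics produce the translates $\pm r\pm s$ and the weight $[(1+r)(1+s)]^{-(d-1)/2}$, while the stationary-phase asymptotics of $\kappa=\fto^{-1}[(1-\rho)_+^{\alpha-1}\chi]$ give the crucial decay $|u|^{-\alpha}$ (Propositions \ref{prop:kernel0} and \ref{prop:Wf}). It then decomposes \emph{spatially} into near-diagonal, far-left, and far-right pieces ($S$, $E$, $V$). The endpoint Lorentz structure enters only in the far-field term $V$, through O'Neil's Lorentz--H\"older inequality paired with Plancherel on the main term, and this is the one place where $\alpha=d(\tfrac1p-\tfrac12)$ is used; the diagonal term is handled by a vector-valued Calder\'on--Zygmund weak $(1,1)$ bound plus interpolation, needing only $\alpha>1/2$. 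If you want to salvage your restriction-based scheme, you would have to prove a genuinely Lorentz-improved single-scale estimate and then sum non-geometrically in the spirit of \cite{LRS}, which is a substantially different (and harder) program than what you have outlined.
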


Theorem \ref{thm:Euc} is an immediate consequence of Theorem \ref{thm:squareend}. Indeed, observe that $G^\alpha F (x) = \mathcal{G}^\alpha f (|x|)$ if $F(x) = f(|x|)$ and that we may identify $L^{p,q}\rad(\R^d)$ with $L^{p,q}(\mu_d)$. In fact, all results to be discussed in the paper on Hankel multipliers $m(\rho)$ with $L^{p,q}(\mu_d)$ norm can be similarly translated into statements on radial Fourier multipliers $m(|\cdot|)$ with $L^{p,q}_{rad}$ norm.

\begin{remark} The Lorentz space $L^{p,2}$ in Theorem \ref{thm:squareend} may not be replaced by $L^{p,q}$ for $q>2$ (see \cite{LRS}). We do not know if $\mathcal{G}^\alpha$ is actually bounded from $L^{p,2}(\mu_d,H)$ to $L^{p,q}(\mu_d)$ for some $q<p$, in particular for $q=2$.
\end{remark}

Next, we shall state multiplier theorems which follow from the square function estimates. Let $m$ be a bounded function on $\R_+$ and $T_m$ be the operator defined by,
\begin{equation*}
\han [T_m f] (\rho) = m(\rho) \han f(\rho).
\end{equation*}

Let $\Phi\in \s(\Rp)$ such that $\Phi(0)=0$, and $\Phi_t$ be a Hankel multiplier transformation defined by $\han[\Phi_t f](\rho) = \Phi(\frac{\rho}{t}) \han f(\rho)$. We define a Littlewood-Paley function $g_\Phi$ by
\begin{equation*} 
g_\Phi f(r) = \biggl(\int_0^\infty |\Phi_t f(r)|_H^2 \frac{dt}{t}\biggr)^{1/2}.
\end{equation*}
Then $\norm{g_\Phi (f)}_{L^p(\mu_d)}$ is comparable to $\norm{f}_{L^p(\mu_d)}$ for $1<p<\infty$. See Appendix.

We shall use a specific $\Phi$ given by $\Phi(\rho) =  \rho \phi(\rho)$, where $\phi$ is a non-trivial smooth function supported on the interval $[1,2]$. Then for $\alpha>1/2$, there is a pointwise estimate similar to \eqref{eqn:pom}
\begin{equation}\label{eqn:pomhan}
g_\Phi[T_m f](r) \leq C \sup_{t>0} \norm{m(t\cdot) \phi}_{L^2_\alpha(\R)} \G^\alpha f(r).
\end{equation}
See Section \ref{sec:RL}. Thus, we obtain the following sharp multiplier theorem in terms of localized $L^2$ Sobolev spaces.
\begin{cor}\label{cor:gt} Let $d>1$, $1<p<\infty$, and $\phi$ be a non-trivial smooth function supported on $[1,2]$. Suppose that $\sup_{t>0} \norm{m(t\cdot) \phi}_{L^2_\alpha(\R)} < \infty$ for some $\alpha>\alpha(d,p)$. Then the operator $T_m$ is bounded on $L^p(\mu_d)$.
\end{cor}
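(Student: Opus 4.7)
The plan is to chain together three ingredients already displayed or promised in the paper: the Littlewood--Paley equivalence $\|f\|_{L^p(\mu_d)} \approx \|g_\Phi f\|_{L^p(\mu_d)}$ for $1<p<\infty$ (proved in the Appendix), the pointwise estimate \eqref{eqn:pomhan}, and the square function bound of Theorem \ref{thm:square}. The hypothesis of the corollary is designed to mesh exactly with these three results.

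For $p \geq 2$ the argument is a straight composition. Choosing $\Phi(\rho) = \rho\phi(\rho)$, which satisfies $\Phi(0)=0$, the Littlewood--Paley equivalence applied to $T_m f$ gives $\|T_m f\|_{L^p(\mu_d)} \lesssim \|g_\Phi[T_m f]\|_{L^p(\mu_d)}$. Inserting \eqref{eqn:pomhan} yields $g_\Phi[T_m f](r) \lesssim \sup_{t>0} \|m(t\cdot)\phi\|_{L^2_\alpha(\R)}\, \G^\alpha f(r)$ pointwise, and since the hypothesis $\alpha > \alpha(d,p)$ is precisely what Theorem \ref{thm:square} requires (with $H=\mathbb{C}$), one finishes by bounding $\|\G^\alpha f\|_{L^p(\mu_d)} \lesssim \|f\|_{L^p(\mu_d)}$.

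For $1 < p < 2$ this direct route fails, because the $L^p$ bound for $\G^\alpha$ in that range demands the strictly larger smoothness $\alpha > d(\tfrac{1}{p}-\tfrac{1}{2}) + \tfrac{1}{2}$, which exceeds $\alpha(d,p)$. The natural repair is duality. Using that $\han$ is self-inverse and isometric on $L^2(\mu_d)$, one computes $T_m^* = T_{\bar m}$. Since both $\alpha(d,p)$ and the Sobolev norm $\|m(t\cdot)\phi\|_{L^2_\alpha(\R)}$ are symmetric under $p \leftrightarrow p'$ and $m \leftrightarrow \bar m$, the hypothesis transfers verbatim to $T_{\bar m}$ at the dual exponent $p' \geq 2$. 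The previous paragraph then applies to $T_{\bar m}$, and duality delivers the $L^p$ bound for $T_m$.

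The only conceptual step is recognizing that in the range $p<2$ the square function route loses regularity, so one must dualize rather than apply Theorem \ref{thm:square} directly; everything else is a routine concatenation of results stated in the paper, so I would not expect any real obstacle beyond getting the bookkeeping right.
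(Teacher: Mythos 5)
Your proof is correct and is precisely the argument the paper intends (it leaves the concatenation and the duality step implicit, simply stating the corollary right after \eqref{eqn:pomhan}): for $p\geq 2$ you chain the Littlewood--Paley equivalence of Theorem \ref{thm:gfuncH}, the pointwise bound \eqref{eqn:pomhan}, and Theorem \ref{thm:square}, and for $1<p<2$ you correctly recognize that $\G^\alpha$ loses regularity on $L^p$ (requiring $\alpha>d(\tfrac1p-\tfrac12)+\tfrac12$) and instead pass to the adjoint $T_m^*=T_{\bar m}$ on $L^{p'}$, where the hypotheses are invariant under $p\leftrightarrow p'$ and $m\leftrightarrow\bar m$. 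No gaps.
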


As was discussed in Introduction, Corollary \ref{cor:gt} is not new. See also \cite{GoSt, DzPr} for multiplier theorems on $L^1$ and Hardy spaces.

Next, we turn to the maximal operators $M_{m} f := \sup_{t>0} |T_{m(t\cdot)} f|$ for a multiplier $m$ supported in $[1/2,2]$. From the square function estimate, we have $L^p$ bounds for the maximal operators $M_m$ for the range $p\geq 2$. 
\begin{cor} \label{cor:max} Let $d>1$ and $2\leq  p<\infty$. Suppose that $m$ is supported in $[\frac{1}{2},2]$ and $m\in L^2_\alpha(\R)$ for $\alpha> \alpha(d,p)$. Then
\begin{equation*}
\norm{M_m f}_{L^p(\mu_d)} \leq C \norm{m}_{L^2_\alpha(\R)}
\norm{f}_{L^{p}(\mu_d)}. 
\end{equation*}
\end{cor}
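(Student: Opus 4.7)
The plan is to deduce the corollary from Theorem~\ref{thm:square} by the Stein--Carbery reduction of $M_{m}$ to square functions of $\G^{\alpha}$-type. Since $m$ is supported in $[1/2,2]$, the Hankel support of $T_{m(s\cdot)}f$ lies in $[1/(2s),2/s]$, forcing $T_{m(s\cdot)}f\to 0$ as $s\to 0^{+}$ and as $s\to\infty$ for $f\in\s(\Rp)$. The fundamental theorem of calculus then gives
\[
|T_{m(t\cdot)}f(r)|^{2}\leq 2\int_{0}^{\infty}|T_{m(s\cdot)}f(r)|\,|\partial_{s}T_{m(s\cdot)}f(r)|\,ds;
\]
writing $s\partial_{s}T_{m(s\cdot)}f=T_{\tilde m(s\cdot)}f$ with $\tilde m(\rho):=\rho m'(\rho)$ and applying Cauchy--Schwarz in $s$ yields the pointwise bound
\[
M_{m}f(r)\leq\sqrt{2}\,\bigl(A_{m}f(r)\bigr)^{1/2}\bigl(A_{\tilde m}f(r)\bigr)^{1/2},
\qquad A_{\mu}f(r):=\pare{\int_{0}^{\infty}|T_{\mu(s\cdot)}f(r)|^{2}\,\frac{ds}{s}}{2}.
\]

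Next I would establish the auxiliary $L^{p}$-bound $\norm{A_{\mu}f}_{L^{p}(\mu_{d})}\les\norm{\mu}_{L^{2}_{\beta}(\R)}\norm{f}_{L^{p}(\mu_{d})}$ for $\beta>\alpha(d,p)$ and $\mu$ supported in $[1/2,2]$. Viewing the map $f\mapsto(T_{\mu(s\cdot)}f)_{s>0}$ as a single Hankel multiplier transformation with values in the Hilbert space $H:=L^{2}(\Rp,ds/s)$, this is exactly the $H$-valued analogue of Corollary~\ref{cor:gt}: the pointwise estimate \eqref{eqn:pomhan} passes to the $H$-valued setting with essentially the same proof, and Theorem~\ref{thm:square} is already formulated for $H$-valued $f$. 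Combined they give the desired bound once one checks the routine uniform estimate $\sup_{t>0}\norm{\mu(ts\cdot)\phi}_{L^{2}_{\beta}(\R;H)}\les\norm{\mu}_{L^{2}_{\beta}(\R)}$ for $\mu$ of bounded support.

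Combining the two steps via H\"older in $L^{p/2}$ would give $\norm{M_{m}f}_{L^{p}}^{2}\les\norm{m}_{L^{2}_{\beta}}\norm{\tilde m}_{L^{2}_{\beta}}\norm{f}_{L^{p}}^{2}$. The hard part will be the apparent half-derivative loss: since $\tilde m=\rho\,m'$ costs one more derivative than $m$, the crude bound $\norm{\tilde m}_{L^{2}_{\beta}}\les\norm{m}_{L^{2}_{\beta+1}}$ would only close the argument at $\alpha>\alpha(d,p)+1/2$. To reach the sharp threshold $\alpha>\alpha(d,p)$ in the statement, I would replace the classical Stein identity by its fractional version, with $\partial_{s}$ replaced by a fractional derivative $D^{\gamma}_{s}$ of small order $\gamma\in(0,1)$, so that $\tilde m$ is replaced by a fractional derivative $m^{(\gamma)}$ of $m$ of order $\gamma$ satisfying $\norm{m^{(\gamma)}}_{L^{2}_{\beta}}\les\norm{m}_{L^{2}_{\beta+\gamma}}$; interpolation in the Sobolev scale of $m$ and taking $\gamma\to 0$ then removes the loss.
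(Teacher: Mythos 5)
Your plan has a genuine gap, and the gap is exactly at the point you flag yourself.  After the Stein squaring trick and H\"older you arrive at
\[
\norm{M_m f}_{L^p}^2 \les \norm{A_m f}_{L^p}\,\norm{A_{\tilde m} f}_{L^p}
\les \norm{m}_{L^2_\beta}\,\norm{m}_{L^2_{\beta+1}}\,\norm{f}_{L^p}^2
\]
for $\beta>\alpha(d,p)$, which indeed costs one more derivative than the statement allows.  Your proposed repair --- replace $\partial_s$ by $D^\gamma_s$ and send $\gamma\to 0$ --- does not close this.  Any version of the "fractional Stein" bound that controls $\sup_s|g(s)|$ by $\norm{g}_{L^2(ds/s)}$ and $\norm{D^\gamma_s g}_{L^2(ds/s)}$ is in substance a Sobolev/Gagliardo--Nirenberg embedding $H^\gamma(\R)\hookrightarrow L^\infty(\R)$ on the $\log s$-line; this requires $\gamma>1/2$, and the constant degenerates as $\gamma\to 1/2^+$.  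One is then forced to pay at least $+1/2$ in the Sobolev order for $m$ (with $\gamma=1/2+\epsilon$ the pair of auxiliary bounds needs $m\in L^2_\beta$ and $m^{(\gamma)}\in L^2_\beta$, hence $m\in L^2_{\beta+1/2+\epsilon}$).  Interpolating the two auxiliary square-function bounds cannot help either, because logarithmic convexity of Sobolev norms runs in the wrong direction: $\norm{m}_{L^2_{\beta+1/2}}\leq\norm{m}_{L^2_\beta}^{1/2}\norm{m}_{L^2_{\beta+1}}^{1/2}$ is of no use for bounding the right-hand side by $\norm{m}_{L^2_{\beta+1/2}}^2$.  So the squaring route, with or without fractional derivatives, yields only $\alpha>\alpha(d,p)+1/2$.

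The paper avoids the squaring step entirely and proves the sharp pointwise estimate \eqref{eqn:maxpoint}, namely $M_m f(r)\leq C\norm{m}_{L^2_\alpha(\R)}\G^\alpha f(r)$ for every $\alpha>1/2$.  The mechanism (Section~\ref{sec:RL}, proof of \eqref{eqn:pmaxprod}) is: write $m(\rho)/\rho=m(\rho)\chi(\rho)$ for a suitable cutoff $\chi$, apply the Riemann--Liouville Lemma~\ref{lem:RiLi} to $h=m\chi$ to represent
\[
T_{m(\cdot/t)}f(r)=C_\alpha\int_0^\infty u^{\alpha+1}\,R^\alpha_{ut}f(r)\,D^\alpha[m\chi](u)\,\frac{du}{u},
\]
and then apply Cauchy--Schwarz in $u$ with respect to $du/u$.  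The crucial point, which the squaring approach misses, is that the first factor is $\bigl(\int_0^\infty|R^\alpha_{ut}f(r)|^2\,du/u\bigr)^{1/2}=\G^\alpha f(r)$ after the change of variable $v=ut$, so it is \emph{independent of $t$}; the supremum in $t$ is therefore free, and the second factor is exactly $\norm{m\chi}_{L^2_\alpha}\les\norm{m}_{L^2_\alpha}$.  No derivative is wasted and no iteration or limiting argument in $\gamma$ is needed.  Corollary~\ref{cor:max} then follows immediately from Theorem~\ref{thm:square}.  If you want to push the Stein--Carbery route, the right move is to replace the two-factor squaring bound by this single-factor Riemann--Liouville representation, which is what the Carbery reference \cite{CarR} actually does.
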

This is a consequence of a pointwise estimate
\begin{equation} \label{eqn:maxpoint}
M_m f (r) \leq C \norm{m}_{L^2_{\alpha}(\R)} \mathcal{G}^{\alpha} f (r)
\end{equation}
(see Section \ref{sec:RL}) for $\alpha>1/2$.

Corollary \ref{cor:max} is sharp in the sense that the required number of derivative $\alpha(d,p)$ cannot be decreased. This can be seen by considering the truncated Bochner-Riesz multiplier $m(\rho) = (1-\rho^2)_+^\lambda \chi(\rho)$ where $\chi$ is a smooth function supported near $\rho = 1$, discarding a harmless part near the origin. This would also prove $L^p_{rad}$ bounds of $S^\lambda_*$ for $2 \leq p<\frac{2d}{d-1-2\lambda}$, which was previously obtained by Kanjin \cite{Kan}\footnote{In fact, the optimal $p$-range $\frac{2d}{d+1+2\lambda}<p<\frac{2d}{d-1-2\lambda}$ was obtained in \cite{Kan}.}.

While it is sharp, the $L^2$-Sobolev condition is too restrictive to yield endpoint bounds. However, we recently proved that 
\begin{equation*}
\norm{M_m}_{L^{p,q}(\mu_d) \to L^{p}(\mu_d)} \approx \norm{\han m}_{L^{p',q'}(\mu_d)}.
\end{equation*}
for $\frac{2d}{d-1}<p<\infty$ and $1\leq q \leq p$ (see \cite{KimM}), which covers endpoint bounds. See \cite{LRS} for $L^{p,1}(\R^d) \to L^p(\R^d)$ bounds of $S^\lambda_*$ for a smaller $p$-range.

\section{Hankel multipliers: Multivariate case} \label{sec:mul}
The goal of this section is to extend the results of the previous section in
the multivariate setting. Fix $n \in \N$ and $\ve{d}=(d_1,\cdots,d_n)\in \R^n$
such that $d_j \geq 1$. Hankel transform $\h$ acting on functions on $(\R_+)^n$
is defined by 
$\h f (s):= {\hh}_{d_n} \cdots {\hh}_{d_1} f (s)$, where $\hh_{d_k}$ acts only on the $k$-th variable. 

For $\ve{d}\in \N^n$, $\h$ generalizes the Fourier transform of $\ve{d}$-radial functions. Suppose that $\tilde{m}$ is $\ve{d}$-radial extension of a bounded function $m$ on $(\R_+)^n$. Then the study of $\mathcal{T}_{\tilde{m}}$ acting on $\ve{d}$-radial functions is reduced to the study of $T_m$ defined by $\h[T_mf] = m \h f$. 

The operator $T_m$ have been studied only recently (see e.g. \cite{BeCaNo, Wr, DzPrWr}). In particular, Wr\'{o}bel \cite{Wr} proved a Marcinkiewicz type multiplier theorem, where a smoothness condition was given in terms of a variant of $L^2$ Sobolev space. We introduce several notations in order to simplify the presentation.

\begin{notation} 
Let $\mu_{\ve{d}}$ be the measure on $(\R_+)^n$ given by $d\mu_{\ve{d}}(s) = \prod_{k=1}^n d\mu_{d_k}(s_k)$.
For given $\ve{t},\ve{s} \in \R^n$, we define $\ve{t} \ve{s}$ and $\ve{t}/\ve{s}$ the vectors given by component-wise product and division, respectively. We write $\ve{t}>\ve{s}$ if $t_k > s_k$ for all $1\leq k\leq n$. If $s\in \R$, we write $\ve{t}>s$ if $t_k > s$ for all $k$. For $1\leq p < \infty$, let $\ve{\alpha}(\ve{d},p)\in (\Rp)^n$ be the vector whose $k$-th component is $\max\left(\frac{1}{2},d_k |\frac{1}{p} - \frac{1}{2}|\right)$. For a given $\ve{\alpha}\in (\Rp)^n$, we shall denote by $L^2_{\ve{\alpha}}(\R^n)$ the Sobolev space equipped with the norm 
$\norm{f}_{L^2_{\ve{\alpha}}(\R^n)} = \norm{w_{\ve{\alpha}}\ftn[f]}_{L^2(\R^n)}$, where $w_{\ve{\alpha}}(\xi) = \prod_{k=1}^n (1+|\xi_k|)^{\alpha_k}$.
\end{notation}

Let $\{\phi_k\}_{1\leq k\leq n}$ be a collection of non-trivial smooth functions supported on the interval $[1,2]$, and let $\phi(r) = \prod_{k=1}^n \phi_k(r_k)$. It was shown in \cite{Wr} that $T_m$ is bounded on $L^p(\mu_{\ve{d}})$ for $1<p<\infty$ if
\begin{equation}\label{eqn:condition}
\sup_{\ve{t}>0} \norm{m(\ve{t}\cdot) \phi}_{L^2_{\ve{\alpha}}(\R^n)} < \infty
\end{equation}
for some $\ve{\alpha} > \ve{\alpha}(\ve{d},1)$. Here we have used the notation $(\ve{t}\cdot) = (t_1\cdot,\cdots,t_n\cdot)$ for the $n$-parameter dilation.

By using a product version of Theorem \ref{thm:square} (see Theorem \ref{thm:squareprod}), we may improve the result as follows.
\begin{thm}\label{thm:gtm} Let $\ve{d} > 1$ and $1<p<\infty$. Suppose that \eqref{eqn:condition} holds for some $\ve{\alpha}>\ve{\alpha}(\ve{d},p)$. Then the operator $T_m$ is bounded on $L^p(\mu_{\ve{d}})$.
\end{thm}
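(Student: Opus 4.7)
My plan is to reduce Theorem \ref{thm:gtm} to the product square function bound Theorem \ref{thm:squareprod} (announced in the preceding paragraph) via a pointwise product analogue of \eqref{eqn:pomhan}. Introduce the product Littlewood--Paley function $G_\Phi h := g_{\Phi_1}\cdots g_{\Phi_n} h$, with $\Phi_k(\rho)=\rho\phi_k(\rho)$ and the $k$-th factor acting only on the $k$-th variable. Iterating the Appendix bound over $k$ gives $\norm{G_\Phi h}_{L^p(\mu_{\ve{d}})} \approx \norm{h}_{L^p(\mu_{\ve{d}})}$ for $1<p<\infty$, so it suffices to control $\norm{G_\Phi[T_m f]}_{L^p(\mu_{\ve{d}})}$.

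The crucial step is the pointwise product estimate
\[
G_\Phi[T_m f](r) \;\les\; \sup_{\ve{t}>0}\norm{m(\ve{t}\cdot)\phi}_{L^2_{\ve{\alpha}}(\R^n)}\;\G^{\ve{\alpha}} f(r),
\]
where $\G^{\ve{\alpha}} := \G^{\alpha_1}_{(1)}\cdots\G^{\alpha_n}_{(n)}$ and each $\G^{\alpha_k}_{(k)}$ is the one-variable Hankel square function acting on the $k$-th coordinate. I would obtain this by reproducing, in all $n$ variables at once, the Fourier-integral argument that proves \eqref{eqn:pomhan} in the single-variable case. Expanding
\[
m(\rho)\phi(\rho/\ve{t}) \;=\; \int_{\R^n} \ftn[m(\ve{t}\cdot)\phi](\ve{s})\, e^{i\ve{s}\cdot(\rho/\ve{t})}\, d\ve{s}
\]
gives $T_{m\phi(\cdot/\ve{t})}f(r) = \int_{\R^n} \ftn[m(\ve{t}\cdot)\phi](\ve{s})\, T_{e^{i\ve{s}\cdot(\cdot/\ve{t})}} f(r)\, d\ve{s}$. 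The operator $T_{e^{i\ve{s}\cdot(\cdot/\ve{t})}}$ factors as a tensor product of one-variable Hankel multipliers, each of which is dominated pointwise by the Bochner--Riesz-type kernel defining $R^{\alpha_k}_{t_k}$ in the $k$-th variable. A Cauchy--Schwarz in $\ve{s}$ against the product weight $w_{\ve{\alpha}}$ produces $\norm{m(\ve{t}\cdot)\phi}_{L^2_{\ve{\alpha}}(\R^n)}$ on one side; taking $L^2(d\ve{t}/\ve{t})$ converts the product of one-variable Bochner--Riesz integrals on the other side into $\G^{\ve{\alpha}} f(r)$.

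Taking $L^p(\mu_{\ve{d}})$ norms of this pointwise estimate and invoking Theorem \ref{thm:squareprod}---the product version of Theorem \ref{thm:square}, which supplies $\norm{\G^{\ve{\alpha}} f}_{L^p(\mu_{\ve{d}})} \les \norm{f}_{L^p(\mu_{\ve{d}})}$ precisely when $\alpha_k>\alpha(d_k,p)$ for every $k$---completes the proof for $p \geq 2$. For $1<p<2$, the required smoothness in each variable would exceed $\alpha(d_k,p)$ if one iterated the single-variable $L^p$ bound directly, so I would instead reduce to the already treated range by duality, using $T_m^*=T_{\overline m}$, the invariance of $\norm{m\phi}_{L^2_{\ve{\alpha}}}$ under complex conjugation, and the symmetry $\ve{\alpha}(\ve{d},p)=\ve{\alpha}(\ve{d},p')$.

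The main obstacle I anticipate is the proof of the pointwise product estimate: verifying that the tensor-product kernel of $T_{e^{i\ve{s}\cdot(\cdot/\ve{t})}}$ is dominated variable by variable by the Bochner--Riesz-type kernel requires a careful multivariate repackaging of the Bessel-function identities behind \eqref{eqn:pomhan}, and this is where the product structure of $\phi$ and of $w_{\ve{\alpha}}$ is essential. Everything else---the iterated Littlewood--Paley equivalence, the Plancherel/Fubini comparison of $L^2_{\ve{\alpha}}(\R^n)$ with its iterated single-variable form, and the invocation of Theorem \ref{thm:squareprod}---should be routine.
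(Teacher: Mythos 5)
Your high-level strategy matches the paper's: reduce to the product Littlewood--Paley equivalence, prove a pointwise estimate of the form $g_\Phi[T_m f]\les \sup_{\ve t}\|m(\ve t\cdot)\phi\|_{L^2_{\ve\alpha}}\,\G^{\ve\alpha} f$, invoke the product square function bound (Theorem~\ref{thm:squareprod}), and handle $1<p<2$ by duality with $T_m^*=T_{\bar m}$ and $\ve\alpha(\ve d,p)=\ve\alpha(\ve d,p')$. Those pieces are all correct. However, the mechanism you propose for the crucial pointwise estimate does not work, and the obstacle you yourself flag at the end is not a matter of ``careful repackaging'' --- the claim is actually false.

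Specifically, you expand $m(\ve t\rho)\phi(\rho)$ via Fourier inversion and assert that the modulated Hankel multiplier $T_{e^{i\ve s\cdot(\cdot/\ve t)}}$ is ``dominated pointwise by the Bochner--Riesz-type kernel defining $R^{\alpha_k}_{t_k}$.'' There is no such domination: $e^{is_k\rho_k/t_k}$ is a unimodular oscillatory multiplier (a Hankel analogue of a half-wave propagator), and its action on a test function is not controlled pointwise by $R^{\alpha_k}_{t_k}f$; the operators have completely different kernels. Nor does the Cauchy--Schwarz you propose, against $w_{\ve\alpha}$, produce $\G^{\ve\alpha}f$ on the other side --- you would be left with $\bigl(\int|T_{e^{i\ve s\cdot(\cdot/\ve t)}}f(r)|^2 w_{\ve\alpha}(s)^{-2}\,d\ve s\bigr)^{1/2}$, which is not the square function. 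What actually generates the Bochner--Riesz profile $\frac{\rho_k}{u_kt_k}\bigl(1-\frac{\rho_k}{u_kt_k}\bigr)_+^{\alpha_k-1}$ is the Riemann--Liouville representation: because $m(\ve t\cdot)\phi$ is supported in $[1,2]^n$, one has $h=I^{\ve\alpha}[D^{\ve\alpha}h]$ with $D^{\ve\alpha}h$ supported in $(-\infty,2]^n$, and the one-sided fractional-integral kernel $(u_k-\rho_k/t_k)_+^{\alpha_k-1}$ is exactly the $R^{\alpha_k}_{u_kt_k}$ symbol after rescaling. The paper proves this multivariate Riemann--Liouville lemma (Lemma~\ref{lem:RiLi}) and then derives \eqref{eqn:pomprod} from it; your Fourier-expansion route bypasses this and has no replacement for it. Replacing your expansion step by the fractional-integral identity, keeping the Cauchy--Schwarz against the weight $\prod u_k^{\alpha_k}$ and the final $L^2(d\ve t/\ve t)$ Fubini/change-of-variables, recovers the paper's argument.
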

This is sharp in the sense that $\ve{\alpha}(\ve{d},p)$ cannot be decreased. One may verify this from the sharpness of Corollary \ref{cor:gt} by considering product type multipliers. Theorem \ref{thm:gtm} implies the following.

\begin{cor} Let $1<p<\infty$, $\ve{\alpha}\in \Z^n$, and $\ve{\alpha}>\ve{\alpha}(\ve{d},p)$. Suppose that 
\begin{equation} \label{eqn:dercon}
\sup_{\ve{t}>0} \int_{t_n}^{2t_n} \cdots \int_{t_1}^{2t_1} |\rho^{\ve{\beta}} D^{\ve{\beta}} m(\rho)|^2 \frac{d\rho_1}{t_1} \cdots \frac{d\rho_n}{t_n} < \infty
\end{equation}
for all $\ve{\beta} \in \Z^n$, $0\leq \ve{\beta}\leq \ve{\alpha}$. Then $T_m$ is bounded on $L^p(\mu_{\ve{d}})$. In particular, \eqref{eqn:dercon} holds if $|D^{\ve{\beta}} m (\rho)| \leq C\rho^{-\ve{\beta}}$ for all $ 0 \leq \ve{\beta} \leq \ve{\alpha}$.
\end{cor}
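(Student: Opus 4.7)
The plan is to reduce the corollary to Theorem \ref{thm:gtm} by verifying that the hypothesis \eqref{eqn:dercon} implies the localized Sobolev condition \eqref{eqn:condition} for the same $\vec{\alpha}$. Since $\vec{\alpha}\in\Z^n$, the norm on $L^2_{\vec{\alpha}}(\R^n)$ is equivalent to
\begin{equation*}
\norm{F}_{L^2_{\vec{\alpha}}(\R^n)} \approx \sum_{0\leq \vec{\beta}\leq \vec{\alpha}} \norm{D^{\vec{\beta}} F}_{L^2(\R^n)},
\end{equation*}
so it suffices to bound $\norm{D^{\vec{\beta}}[m(\vec{t}\cdot)\phi]}_{L^2(\R^n)}$ uniformly in $\vec{t}>0$ for each multi-index $\vec{\beta}\leq\vec{\alpha}$.

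First I would apply the Leibniz rule componentwise to write
\begin{equation*}
D^{\vec{\beta}}\bigl[m(\vec{t}\cdot)\phi\bigr](r) = \sum_{\vec{\gamma}+\vec{\delta}=\vec{\beta}} c_{\vec{\gamma},\vec{\delta}}\, \vec{t}^{\,\vec{\gamma}} (D^{\vec{\gamma}} m)(\vec{t}r)\, D^{\vec{\delta}}\phi(r),
\end{equation*}
where $\vec{t}^{\,\vec{\gamma}} = \prod_k t_k^{\gamma_k}$. Since $\phi$ is a fixed tensor product of smooth bumps supported in $[1,2]^n$, each factor $D^{\vec{\delta}}\phi$ is bounded and supported in $[1,2]^n$. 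Taking $L^2$ norms and using the triangle inequality, I reduce to estimating, for each fixed $\vec{\gamma}\leq\vec{\alpha}$,
\begin{equation*}
\int_{[1,2]^n} \babs{\vec{t}^{\,\vec{\gamma}} (D^{\vec{\gamma}} m)(\vec{t}r)}^2\, dr.
\end{equation*}

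The key step is the change of variables $\rho=\vec{t}r$, which maps $[1,2]^n$ onto $\prod_k [t_k,2t_k]$ with Jacobian $\prod_k t_k$. On this region $\vec{t}\leq\rho\leq 2\vec{t}$ componentwise, hence $\vec{t}^{\,\vec{\gamma}}\leq \rho^{\vec{\gamma}}\leq 2^{|\vec{\gamma}|}\vec{t}^{\,\vec{\gamma}}$, so we may replace $\vec{t}^{\,\vec{\gamma}}$ by $\rho^{\vec{\gamma}}$ up to a harmless constant and obtain
\begin{equation*}
\int_{[1,2]^n} \babs{\vec{t}^{\,\vec{\gamma}} (D^{\vec{\gamma}} m)(\vec{t}r)}^2 dr \les \int_{t_n}^{2t_n}\cdots\int_{t_1}^{2t_1} \babs{\rho^{\vec{\gamma}} D^{\vec{\gamma}} m(\rho)}^2 \frac{d\rho_1}{t_1}\cdots \frac{d\rho_n}{t_n},
\end{equation*}
which is finite uniformly in $\vec{t}$ by \eqref{eqn:dercon}. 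Summing over $\vec{\gamma}\leq\vec{\alpha}$ verifies \eqref{eqn:condition}, and Theorem \ref{thm:gtm} gives the $L^p(\mu_{\vec{d}})$ boundedness of $T_m$.

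For the "in particular" assertion, the pointwise bound $|D^{\vec{\beta}} m(\rho)|\leq C\rho^{-\vec{\beta}}$ gives $|\rho^{\vec{\beta}} D^{\vec{\beta}} m(\rho)|\leq C$, and since each inner integral $\int_{t_k}^{2t_k} \frac{d\rho_k}{t_k}$ equals $1$, \eqref{eqn:dercon} is trivially satisfied. I do not anticipate any real obstacle in this proof — it is essentially bookkeeping with multi-indices, and the only substantive step is the observation that on the dyadic rectangle $\prod_k[t_k,2t_k]$ the weights $\vec{t}^{\,\vec{\gamma}}$ and $\rho^{\vec{\gamma}}$ are comparable, which is what allows the scale-invariant form of \eqref{eqn:dercon} to match the dilated Sobolev condition.
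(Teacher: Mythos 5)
Your proof is correct and follows exactly the route the paper intends: the paper states only that Theorem \ref{thm:gtm} implies the corollary and gives no further details, and your argument supplies the missing bookkeeping (equivalence of the integer-order $L^2_{\vec\alpha}$ norm with the sum of $L^2$ norms of mixed derivatives, the Leibniz rule, the scaling $D^{\vec\gamma}_r[m(\vec t r)]=\vec t^{\,\vec\gamma}(D^{\vec\gamma}m)(\vec t r)$, and the change of variables $\rho=\vec t r$ with the comparability $\vec t^{\,\vec\gamma}\approx\rho^{\vec\gamma}$ on $\prod_k[t_k,2t_k]$). Nothing is missing; the "in particular" clause is likewise handled correctly.
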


We may also extend Corollary \ref{cor:max} for the $n$-parameter maximal operator $M_m f := \sup_{\ve{t} >0} |T_{m(\ve{t} \cdot)} f|$.
\begin{thm}\label{thm:maxmul}
Let $\ve{d} >1$ and $2 \leq p<\infty$. Suppose that $m$ is supported
in $[\frac{1}{2},2]^n$ and $m \in L^2_{\ve{\alpha}}(\R^n)$ for some
$\ve{\alpha}>\ve{\alpha}(\ve{d},p)$. Then 
\begin{equation*}
\norm{M_m f}_{L^p(\mu_{\ve{d}})} \leq C \norm{m}_{L^2_{\ve{\alpha}}(\R^n)}
\norm{f}_{L^{p}(\mu_{\ve{d}})}. 
\end{equation*}
\end{thm}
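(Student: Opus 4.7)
The plan is to derive Theorem~\ref{thm:maxmul} from a pointwise domination of $M_m f$ by a product square function, in exact analogy with the derivation of Corollary~\ref{cor:max} from \eqref{eqn:maxpoint} and Theorem~\ref{thm:square}. Introduce the product square function
\begin{equation*}
\G^{\ve{\alpha}}_{\otimes} f(\ve{r}) := \biggl( \int_{(0,\infty)^n} \bigl| R^{\alpha_1}_{1,t_1} \cdots R^{\alpha_n}_{n,t_n} f(\ve{r}) \bigr|^2 \prod_{k=1}^n \frac{dt_k}{t_k} \biggr)^{1/2},
\end{equation*}
where $R^{\alpha_k}_{k,t_k}$ denotes the one-variable Hankel multiplier $R^{\alpha_k}_{t_k}$ of Section~\ref{sec:Hankel} acting in the $k$-th variable only. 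Its $L^p(\mu_{\ve{d}})$ boundedness for $p\geq 2$ and $\ve{\alpha}>\ve{\alpha}(\ve{d},p)$ is supplied by the product square function estimate (Theorem~\ref{thm:squareprod}) of Section~\ref{sec:product}. Note that $\ve{\alpha}(\ve{d},p)\geq \tfrac{1}{2}$ componentwise, so the hypothesis of the theorem forces $\alpha_k>\tfrac{1}{2}$ for each $k$.

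The first step is to establish the pointwise estimate
\begin{equation*}
M_m f(\ve{r}) \leq C \norm{m}_{L^2_{\ve{\alpha}}(\R^n)} \, \G^{\ve{\alpha}}_{\otimes} f(\ve{r}), \qquad \ve{\alpha} > \tfrac{1}{2},
\end{equation*}
for any $m$ supported in $[\tfrac{1}{2},2]^n$. Since $m(\ve{\rho}) = m(\ve{\rho})\prod_k \phi_0(\rho_k)$ for a smooth cutoff $\phi_0$ equal to one on $[\tfrac{1}{2},2]$, $n$-fold Fourier inversion gives
\begin{equation*}
m(\ve{t}\ve{\rho}) = (2\pi)^{-n}\int_{\R^n} \ftn[m](\ve{\tau}) \prod_{k=1}^n e^{i\tau_k t_k \rho_k}\phi_0(t_k\rho_k) \, d\ve{\tau}.
\end{equation*}
Since the $\rho$-dependent integrand is a tensor product of one-variable symbols, the corresponding multivariate Hankel multiplier factors as a composition of one-variable ones. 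Applying Cauchy--Schwarz in $\ve{\tau}$ against the weight $\prod_k (1+|\tau_k|)^{-2\alpha_k}$ extracts $\norm{m}_{L^2_{\ve{\alpha}}(\R^n)}^2$ from one factor and leaves
\begin{equation*}
\int_{\R^n} \bigl| T^{(1)}_{e^{i\tau_1 t_1 \cdot}\phi_0(t_1\cdot)} \cdots T^{(n)}_{e^{i\tau_n t_n \cdot}\phi_0(t_n\cdot)} f(\ve{r}) \bigr|^2 \prod_{k=1}^n (1+|\tau_k|)^{-2\alpha_k} \, d\ve{\tau},
\end{equation*}
which I would bound uniformly in $\ve{t}$ by $\G^{\ve{\alpha}}_{\otimes} f(\ve{r})^2$ by iterating, one variable at a time, the one-variable pointwise argument underlying \eqref{eqn:maxpoint}. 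Taking $\sup_{\ve{t}}$ then yields the pointwise bound.

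Once the pointwise estimate is in place, raising both sides to the $p$-th power, integrating against $\mu_{\ve{d}}$, and invoking Theorem~\ref{thm:squareprod} under the hypothesis $\ve{\alpha}>\ve{\alpha}(\ve{d},p)$ completes the proof. The main obstacle is the uniform-in-$\ve{t}$ control of the above $\ve{\tau}$-integral by $\G^{\ve{\alpha}}_{\otimes} f(\ve{r})^2$: this is where all the analytic content of the one-variable argument for \eqref{eqn:maxpoint} must be transferred to the multivariate setting via successive application in each coordinate (treating the remaining coordinates as frozen parameters). Everything else---the Fourier-inversion factorization, the Cauchy--Schwarz, and the final $L^p(\mu_{\ve{d}})$ bound---is then essentially routine.
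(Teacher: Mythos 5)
Your overall strategy is exactly the paper's: prove a pointwise estimate $M_m f(\ve{r}) \lesssim \norm{m}_{L^2_{\ve{\alpha}}} \G^{\ve{\alpha}} f(\ve{r})$ for $\ve{\alpha}>\tfrac12$, then invoke Theorem \ref{thm:squareprod}. (Your $\G^{\ve{\alpha}}_\otimes$ is the $\G^{\ve{\alpha}}$ of \eqref{def:sq}, and your pointwise estimate is precisely \eqref{eqn:pmaxprod}.) The final step — raising to the $p$-th power, integrating, and using Theorem \ref{thm:squareprod} — is fine. The issue is your proof of the pointwise estimate itself.

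The paper proves \eqref{eqn:pmaxprod} via the product Riemann--Liouville lemma (Lemma \ref{lem:RiLi}), not Fourier inversion. The point of Riemann--Liouville is that it writes $m(\rho/\ve{t})$ as a superposition, over a parameter $u\in(0,2]^n$, of precisely the multipliers $\prod_k \tfrac{\rho_k}{u_k t_k}\bigl(1-\tfrac{\rho_k}{u_k t_k}\bigr)_+^{\alpha_k-1}$, with density $u^{\ve{\alpha}} D^{\ve{\alpha}}[m\chi](u)\,du/u$. These are exactly the symbols of $R^{\ve{\alpha}}_{\ve{t}u}$, so Cauchy--Schwarz in $u$ produces $\G^{\ve{\alpha}}f(\ve{r})$ on the nose, with the other factor controlled by $\norm{m\chi}_{L^2_{\ve{\alpha}}} \lesssim \norm{m}_{L^2_{\ve{\alpha}}}$ (using that $D^{\ve{\alpha}}[m\chi]$ is supported in $(-\infty,2]^n$). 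Your Fourier-inversion route instead produces modulations $e^{i\tau_k t_k\rho_k}\phi_0(t_k\rho_k)$. These are \emph{not} the $R^{\alpha_k}$ symbols, and after Cauchy--Schwarz in $\ve{\tau}$ you are left needing the estimate
\begin{equation*}
\int_{\R^n} \bigl| T^{(1)}_{e^{i\tau_1 t_1 \cdot}\phi_0(t_1\cdot)} \cdots T^{(n)}_{e^{i\tau_n t_n \cdot}\phi_0(t_n\cdot)} f(\ve{r}) \bigr|^2 \prod_{k=1}^n (1+|\tau_k|)^{-2\alpha_k}\, d\ve{\tau} \lesssim \G^{\ve{\alpha}} f(\ve{r})^2
\end{equation*}
uniformly in $\ve{t}$. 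This is a genuine gap, not a routine step. The left side is a weighted $L^2$ average over modulations at a \emph{fixed} scale $\ve{t}$, while $\G^{\ve{\alpha}}$ integrates $R^{\ve{\alpha}}_{\ve{t}'}f$ over all scales $\ve{t}'$; there is no obvious pointwise relationship between the modulated Littlewood--Paley pieces and the $R^{\alpha}_{t'}$ means. Moreover, the phrase "by iterating the one-variable pointwise argument underlying \eqref{eqn:maxpoint}" is circular: that one-variable argument is itself the Riemann--Liouville one, not a modulation argument, and it does not compose coordinatewise with a Fourier-inversion decomposition (for non-tensor-product $m$ the coordinates cannot be decoupled this way, which is exactly why the paper needs the \emph{product} Riemann--Liouville lemma). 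If you want to pursue a modulation-based proof you would need to prove the displayed inequality from scratch; the direct route is to reproduce the paper's argument: write $m(\rho)/(\rho_1\cdots\rho_n) = m(\rho)\chi(\rho)$, apply Lemma \ref{lem:RiLi} to $m\chi$, and Cauchy--Schwarz against $\G^{\ve{\alpha}}$.
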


See Section \ref{sec:product} for the proof of Theorem \ref{thm:gtm} and \ref{thm:maxmul}.

\subsection{Application to Bochner-Riesz type multipliers} \label{sec:Boch}
Let $m^\lambda(\rho) = (1-|\rho|^2)^\lambda_+$, where $|\rho|^2 = \rho_1^2 + \cdots + \rho_n^2$, and $T^\lambda$ be the operator defined by $\h [T^\lambda f] = m^\lambda \h f$. Let us temporarily assume that $\ve{d} \in \N^n$. Then the study of the usual Bochner-Riesz means for the Fourier transform acting on $\ve{d}$-radial functions reduces to the study of $T^\lambda$ since
\begin{equation*}
S^\lambda F(x_1,\cdots,x_n) = T^\lambda f (|x_1|,\cdots,|x_n|)
\end{equation*}
if $F$ is the $\ve{d}$-radial extension of $f$. Note that $T^\lambda$ cannot be bounded on $L^p(\mu_{\ve{d}})$ unless $|\frac{1}{p} -\frac{1}{2}| < \frac{1}{\norm{d}} \left(\lambda+\frac{1}{2}\right)$, where $\norm{d} = \sum_{k=1}^n d_k$.
\begin{cor}  \label{thm:boch}
Let $\ve{d}>1$ and $\lambda > \max_{1\leq k\leq n} \frac{\norm{d}}{2d_k} -\frac{1}{2}$. Then $T^\lambda$ is bounded on $L^p(\mu_{\ve{d}})$ if 
\begin{equation} \label{eqn:boch}
\abs{\frac{1}{p} -\frac{1}{2}} < \frac{1}{\norm{d}} \left(\lambda+\frac{1}{2}\right).
\end{equation}
\end{cor}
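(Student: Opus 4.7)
The plan is to derive Corollary~\ref{thm:boch} from Theorem~\ref{thm:gtm} applied to $m = m^\lambda$, so it suffices to exhibit some $\ve{\alpha} > \ve{\alpha}(\ve{d}, p)$ with
\begin{equation*}
\sup_{\ve{t}>0} \norm{m^\lambda(\ve{t}\cdot)\phi}_{L^2_{\ve{\alpha}}(\R^n)} < \infty.
\end{equation*}
The argument splits into an algebraic step (finding such an $\ve{\alpha}$) and an analytic step (verifying the uniform localized Sobolev bound).

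For the algebraic step, set $\theta = \abs{\frac{1}{p} - \frac{1}{2}}$ and partition $\{1, \dots, n\} = A \cup B$ with $A = \{k : d_k\theta \geq \frac{1}{2}\}$, so that $\sum_k \max(\frac{1}{2}, d_k\theta) = \theta\sum_{k \in A}d_k + |B|/2$. Hypothesis \eqref{eqn:boch} yields $\theta\sum_{k\in A} d_k < (\lambda+\frac{1}{2})\sum_{k\in A}d_k/\norm{d}$, while the condition $\lambda > \max_k \norm{d}/(2d_k) - \frac{1}{2}$ forces $d_k(\lambda+\frac{1}{2})/\norm{d} > \frac{1}{2}$ for every $k$, hence $|B|/2 < (\lambda+\frac{1}{2})\sum_{k\in B}d_k/\norm{d}$. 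Adding and using $\sum_k d_k = \norm{d}$ gives $\sum_k \max(\frac{1}{2}, d_k\theta) < \lambda + \frac{1}{2}$, so one may pick $\ve{\alpha}$ with each $\alpha_k$ slightly above $\max(\frac{1}{2}, d_k\theta)$ and still $\sum_k \alpha_k < \lambda + \frac{1}{2}$.

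For the analytic step I would exploit the pointwise inequality $\prod_k(1+|\xi_k|)^{2\alpha_k} \leq (1+|\xi|)^{2\sum_k \alpha_k}$, which produces the embedding $L^2_\beta(\R^n) \hookrightarrow L^2_{\ve{\alpha}}(\R^n)$ with $\beta = \sum_k \alpha_k$, reducing the problem to the isotropic estimate $\sup_{\ve{t}} \norm{m^\lambda(\ve{t}\cdot)\phi}_{L^2_\beta(\R^n)} < \infty$ for $\beta < \lambda + \frac{1}{2}$. For integer multi-indices $\ve{\gamma}$ with $|\ve{\gamma}| < \lambda + \frac{1}{2}$, a direct computation shows that $\partial^{\ve{\gamma}}[m^\lambda(\ve{t}\cdot)]$ expands as a sum of terms of the form $P_j(\rho, \ve{t})(1 - \sum_k t_k^2\rho_k^2)_+^{\lambda-j}$ with $j \leq |\ve{\gamma}|$, and after the substitution $\sigma = \ve{t}\rho$ the $L^2$ norm over $\supp\phi \subset [1,2]^n$ reduces to controlling an integral of $(1 - |\sigma|^2)_+^{2(\lambda - |\ve{\gamma}|)}$ over the box $\prod_k[t_k, 2t_k]$, which converges thanks to $|\ve{\gamma}| < \lambda + \frac{1}{2}$. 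Fractional $\beta$ is then handled by complex interpolation.

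The step I anticipate as the main obstacle is verifying that the surviving powers of $t_k$ coming from $P_j$ and from the Jacobian combine to give a bound truly independent of $\ve{t}$ in the critical regime $|\ve{t}| \in [\frac{1}{2}, 1)$, where the ellipsoid $\{|\ve{t}\rho| = 1\}$ genuinely crosses $\supp\phi$; outside this regime either $m^\lambda(\ve{t}\cdot)$ vanishes on $\supp\phi$ or is smooth there, so no problem arises. When $\ve{t}$ is highly anisotropic, this requires separating the $t_k$ comparable to $1/\sqrt{n}$ (the ``normal'' directions to the ellipsoid) from those that are small (essentially ``tangential''), and checking in each case that the box dimensions and Jacobian factors precisely absorb the polynomial weights.
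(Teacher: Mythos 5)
Your proposal takes essentially the same route as the paper: reduce to Theorem~\ref{thm:gtm} via the isotropic embedding $L^2_{\beta}(\R^n)\hookrightarrow L^2_{\ve{\alpha}}(\R^n)$ with $\beta=\sum_k\alpha_k$, and then verify the uniform localized $L^2_\beta$ Sobolev bound for $m^\lambda$ with $\beta<\lambda+\frac{1}{2}$ (which is exactly Lemma~\ref{lem:boch}). Your algebraic step is correct and, in fact, spells out more explicitly why $\sum_k\max(\frac{1}{2},d_k\theta)<\lambda+\frac{1}{2}$; the paper instead just writes down the admissible choice $\alpha_k=\frac{d_k}{\norm{d}}(\lambda+\frac{1}{2})-\epsilon(p)$, which automatically has the right component sum.

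The one place where the two arguments diverge is the proof of the Sobolev estimate. You propose differentiating $m^\lambda(\ve{t}\cdot)\phi$ directly and controlling the resulting singular integral of $(1-|\sigma|^2)_+^{2(\lambda-|\ve{\gamma}|)}$ over the box $\prod_k[t_k,2t_k]$. That can be made to work, but it requires you to carry the $\ve{t}$-dependent polynomial weights and Jacobian through the singular integral and to argue convergence near $|\sigma|=1$; this is precisely the ``main obstacle'' you flag at the end. The paper sidesteps this by a dyadic decomposition $m^\lambda=\sum_{l\ge 0}2^{-l\lambda}m^\lambda_l$ around the singularity $|\rho|=1$: on each piece $m^\lambda_l$ everything is smooth with $|D^{\ve{\gamma}}m^\lambda_l|\les 2^{l|\ve{\gamma}|}$ supported in a shell of width $2^{-l}$, and the only geometric input is the volume bound $\les 2^{-l}$ for the intersection of that shell with the box, obtained after ordering $t_1\ge\cdots\ge t_n$ and noting $t_1\ge c_n$. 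Summing the geometric series in $l$ gives exactly the threshold $\beta<\lambda+\frac{1}{2}$. This cleanly decouples the smoothness loss from the volume count and makes the uniformity in $\ve{t}$ transparent, including in the anisotropic regime you were worried about.
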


This follows from Theorem \ref{thm:gtm}. Since Theorem \ref{thm:gtm} applies to general multipliers, not necessarily radial, one does not expect a sharp result for $m^\lambda$ from Theorem \ref{thm:gtm} for $n>1$. However, we expect that one may improve the statement by taking advantage of $m^\lambda$ being radial. We hope to return to this issue in future work.

\begin{lem} \label{lem:boch} Let $\phi$ be a tensor product of $n$ smooth functions supported on $[1,2]$. If $0\leq \beta < \lambda + \frac{1}{2}$, then
\begin{equation*}
\sup_{\ve{t}>0} \norm{ m^\lambda (\ve{t} \cdot) \phi }_{L^2_\beta(\R^n)}  < \infty.
\end{equation*} 
\end{lem}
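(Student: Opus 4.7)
The plan is to reduce the multivariate bound to the classical one-dimensional fact that $\|\tau_+^\lambda\psi\|_{H^\beta(\R)}<\infty$ for $\psi\in C^\infty_c$ exactly when $\beta<\lambda+1/2$, by exploiting a nondegeneracy property of the defining function $h_{\ve{t}}(\rho):=1-\sum_k t_k^2\rho_k^2$ on the ellipsoidal level surface inside $[1,2]^n$. I read $L^2_\beta(\R^n)$ as the isotropic Sobolev space $H^\beta(\R^n)$ (which is what is needed for the intended use in Corollary \ref{thm:boch} via the elementary embedding $H^{\sum_k\alpha_k}\hookrightarrow L^2_{\ve{\alpha}}$).

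First I would trim the parameter range. Since $\phi$ is supported in $[1,2]^n$, testing the condition $\sum_k t_k^2\rho_k^2\leq 1$ at $\rho_k\equiv 1$ forces $\sum_k t_k^2\leq 1$, so only $\ve{t}$ in the unit ball contributes. If moreover $\sum_k t_k^2\leq c_0$ for small $c_0$, then $h_{\ve{t}}\geq 1/2$ on $\text{supp}\,\phi$, so $f_{\ve{t}}:=m^\lambda(\ve{t}\,\cdot)\phi$ is smooth with bounded $C^N$-norm and the estimate is immediate. It remains to handle the compact shell $\sum_k t_k^2\in[c_0,1]$.

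Next I would establish the key geometric uniformity: on the portion of $\{h_{\ve{t}}=0\}$ meeting $[1,2]^n$ one has $|\nabla h_{\ve{t}}|^2=4\sum_k t_k^4\rho_k^2\geq 1/n$, using the Cauchy--Schwarz bound $\sum_k t_k^4\rho_k^2\geq(\sum_k t_k^2\rho_k^2)^2/\sum_k\rho_k^2\geq 1/(4n)$. A coarea argument then yields $\int_{[1,2]^n}(h_{\ve{t}})_+^s\,d\rho\leq C\int_0^1\tau^s\,d\tau$ for every $s>-1$, with $C$ independent of $\ve{t}$. Applying Leibniz, $\partial^\gamma f_{\ve{t}}$ is a finite sum of terms of the form $C_\gamma\prod_k t_k^{2\gamma_k'}\cdot(h_{\ve{t}})_+^{\lambda-k}\cdot(\text{polynomial in }\rho)$ with $k\leq|\gamma|$; the prefactor $\prod_k t_k^{2\gamma_k'}\leq 1$ is harmless, and the coarea bound gives $\|\partial^\gamma f_{\ve{t}}\|_{L^2}\leq C$ uniformly whenever $|\gamma|<\lambda+1/2$. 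This settles the lemma for every integer $N<\lambda+1/2$.

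For fractional $\beta$, setting $N=\lfloor\beta\rfloor$, I would use the Slobodeckij characterization
\begin{equation*}
\|f\|_{H^\beta}^2\asymp\|f\|_{H^N}^2+\sum_{|\gamma|=N}\iint\frac{|\partial^\gamma f(x)-\partial^\gamma f(y)|^2}{|x-y|^{n+2(\beta-N)}}\,dx\,dy,
\end{equation*}
and reduce to bounding the Gagliardo seminorm of $\partial^\gamma f_{\ve{t}}$ of order $\beta-N\in[0,1)$. Covering $[1,2]^n$ by finitely many patches in which the gradient bound allows straightening $h_{\ve{t}}$ to its first coordinate via a $C^\infty$-diffeomorphism with derivatives bounded uniformly in $\ve{t}$, the dominant part of $\partial^\gamma f_{\ve{t}}$ becomes $\tau_+^{\lambda-N}\tilde\psi_{\ve{t}}(\tau,y)$ with smooth $\tilde\psi_{\ve{t}}$; the 1D bound $\|\tau_+^{\lambda-N}\eta\|_{H^{\beta-N}(\R)}\leq C_\eta$ (valid since $\beta-N<(\lambda-N)+1/2$) combined with Minkowski in $y$ and standard $H^{\beta-N}$-invariance under $C^\infty$-diffeomorphisms closes the estimate. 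The main obstacle throughout is keeping all constants uniform as $\ve{t}$ degenerates in $[c_0,1]^n$ (some $t_k$ small or $\sum_k t_k^2$ near $1$); both the coarea estimate and the straightening charts are kept under control by the Cauchy--Schwarz lower bound on $|\nabla h_{\ve{t}}|$ together with the trivial inequality $t_k\leq 1$.
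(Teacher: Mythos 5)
Your argument is correct in spirit but takes a genuinely different route from the paper. The paper first performs the standard dyadic decomposition $m^\lambda=\sum_{l\geq 0}2^{-l\lambda}m^\lambda_l$, so each piece $m^\lambda_l(\ve t\cdot)\phi$ is \emph{smooth} and supported in an annulus of thickness $\sim 2^{-l}$; the Sobolev bound $\|m^\lambda_l(\ve t\cdot)\phi\|_{L^2_\beta}\lesssim 2^{l(\beta-1/2)}$ then reduces, for integer $\beta$, to an elementary volume estimate for the intersection of a box with a thin annulus, and the fractional case is disposed of in one line by interpolation in the Sobolev scale, before summing the geometric series in $l$. You instead work directly with the positive part $(1-|\ve t\rho|^2)_+^\lambda$: a coarea estimate, fed by a gradient lower bound on the level surface, handles the $L^2$ norms of all Leibniz terms for integer derivatives, and the fractional case is attacked via the Slobodeckij/Gagliardo seminorm and a straightening change of variables reducing to the one--dimensional bound on $\tau_+^{\lambda-N}$. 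Both proofs ultimately rest on the same geometric fact, and your Cauchy--Schwarz bound $|\nabla h_{\ve t}|^2\geq 1/n$ on $\{h_{\ve t}=0\}\cap[1,2]^n$ is a clean global version of the paper's reduction (which orders $t_1\geq\cdots\geq t_n$ and observes $t_1\geq c_n$). What the dyadic route buys is that the fractional Sobolev order is handled essentially for free by interpolation; what your route saves is the decomposition and the resummation, at the cost of having to justify (a) the coarea bound uniformly over $\ve t$ in the compact shell and (b) the uniformity and Sobolev--invariance of the straightening diffeomorphisms, including the fact that the straightening direction must be allowed to change with $\ve t$. You flag both points but leave them at the level of assertions; they are believable given the uniform gradient bound, but a complete proof along your lines would need to carry these out, and in particular would need a localized covering of the parameter shell by regions where one fixed coordinate $\rho_k$ dominates $|\nabla h_{\ve t}|$ so that the implicit function theorem applies with that coordinate as the transverse variable. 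So there is no gap in the idea, but the fractional step is sketchier than the paper's.
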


Let us show how Corollary \ref{thm:boch} follows from Lemma \ref{lem:boch}. Assume \eqref{eqn:boch}. Then there is $\epsilon(p)>0$ such that if we define
\begin{equation*}
\alpha_k = \frac{d_k}{\norm{d}} \left(\lambda+\frac{1}{2}\right) -\epsilon(p)
\end{equation*}
for $1 \leq k \leq n$, then $\ve{\alpha} > \ve{\alpha}(\ve{d},p)$ and $\norm{\ve{\alpha}} := \sum_{j=1}^n \alpha_j < \lambda  + \frac{1}{2}$. By Theorem \ref{thm:gtm} together with Lemma \ref{lem:boch} and the trivial embedding $L^2_{\norm{\ve{\alpha}}} \hookrightarrow L^2_{\ve{\alpha}}$, we have Corollary \ref{thm:boch}.

\begin{proof} [Proof of Lemma \ref{lem:boch}]
Although the proof seems to be standard, we include it for completeness. We apply the standard dyadic decomposition for the Bochner-Riesz multipliers.
Take a smooth function $\chi$ supported on $\frac{1}{2} \leq x \leq 2$, such that $\sum_{l=0}^\infty \chi(2^l x) = 1$ if $0<x\leq 1$. Then one may write $m^\lambda (\rho) = \sum_{l=0}^\infty 2^{-l\lambda} m^\lambda_l(\rho),$ where 
\begin{equation*}
m^\lambda_l(\rho) = 2^{l\lambda} (1-|\rho|^2)^\lambda \chi(2^l (1-|\rho|^2)).
\end{equation*}

Then we have
\begin{equation*}
\sup_{\ve{t}>0} \norm{ m^\lambda (\ve{t} \cdot) \phi }_{L^2_\beta(\R^n)}  \leq \sum_{l=0}^\infty 2^{-l\lambda} \sup_{\ve{t}>0} \norm{ m^\lambda_l (\ve{t} \cdot) \phi }_{L^2_\beta(\R^n)}.
\end{equation*}
Since $m^\lambda_l (\ve{t} \cdot) \phi \equiv 0$ if $\abs{\ve{t}} >1$, we may assume that the supremum is taken over $|\ve{t}|\leq 1$. 
For $l\leq 2$, it is easy to show that 
\begin{equation*}
\sup_{\ve{t}>0} \norm{ m^\lambda_l (\ve{t} \cdot) \phi }_{L^2_\beta(\R^n)} < \infty
\end{equation*}
for any $\beta \geq 0$.

For $l>2$, we need to show that 
\begin{equation} \label{eqn:sob}
\sup_{\ve{t}>0} \norm{ m^\lambda_l (\ve{t} \cdot) \phi }_{L^2_\beta(\R^n)} \leq C 2^{l(\beta-\frac{1}{2})}.
\end{equation}
Here, we may further assume that $1/4 \leq  |\ve{t}| \leq  1$ since $m^\lambda_l$ is compactly supported away from the origin. Moreover, by interpolation, it is enough to show \eqref{eqn:sob} for integer $\beta \geq 0$.

By a direct calculation, for $\norm{\ve{\gamma}} \leq \beta$ and $|\ve{t}| \leq 1$,
\begin{equation*}
|D^{\ve{\gamma}} [m^\lambda_l(\ve{t} \cdot) \phi ]|(\rho) \les 2^{l\beta} \tilde{\chi}(2^l(1-|\ve{t} \cdot \rho|^2)) \tilde{\phi}(\rho),
\end{equation*}
where $\tilde{\chi}$ and $\tilde{\phi}$ are finite sums of derivatives of $\chi$ and $\phi$, respectively.

Thus, \eqref{eqn:sob} follows from 
\begin{equation} \label{eqn:sob2}
\sup_{1/4 \leq |\ve{t}| \leq 1} \int |\tilde{\chi}(2^l(1-|\ve{t} \cdot \rho|^2)) \tilde{\phi}(\rho)|^2 d\rho \les 2^{-l}.
\end{equation}

The integral on the left hand side of \eqref{eqn:sob2} is bounded by 
\begin{align*}
\int_{t_n}^{2t_n} \cdots \int_{t_1}^{2t_1} |\tilde{\chi}(2^l(1-|\rho|^2))|^2 \frac{d\rho_1}{t_1} \cdots \frac{d\rho_n}{t_n}.
\end{align*}

Thus, we are led to consider the volume of the intersection between the box $\prod_{j=1}^n [t_j,2t_j]$ and an annulus of radius and width comparable to $1$ and $2^{-l}$, respectively. Assume, without the loss of generality, that $t_1 \geq t_2 \geq \cdots \geq t_n$. Then $t_1 \geq c_n$ for some $c_n>0$ since $|\ve{t}|>1/4$. 

We claim that
\begin{equation*}
\int_{t_1}^{2t_1} |\tilde{\chi}(2^l(1-|\rho|^2))|^2 \frac{ d\rho_1}{t_1} \les 2^{-l}
\end{equation*}
provided that $t_1\geq c_n$, which would imply \eqref{eqn:sob2}. When evaluating the integral, we may assume that $|\rho'|^2 \leq 1-c_n^2 < 1$ since $\rho_1\geq c_n$, where $\rho = (\rho_1,\rho')$. The claim follows from the fact that the $\rho_1$ support of $\chi(2^l(1-|\rho|^2))$ for each fixed $\rho'$ with $|\rho'|^2 \leq 1-c_n^2$, is contained in an interval of size $O(2^{-l})$. This proves the claim, and thus \eqref{eqn:sob2}.
\end{proof}

\section{Product variants}\label{sec:product}
\subsection{Product square functions} \label{sec:proofgtm}
Let $\Phi^{(k)}(\rho_k) = \rho_k \phi_k(\rho_k)$ for $\phi_k$ as in Section \ref{sec:mul}, $\Phi_{\ve{t}} f = \Phi^{(n)}_{t_n} \cdots \Phi^{(1)}_{t_1} f$, and $R^{\ve{\alpha}}_{\ve{t}}f = R^{\alpha_n}_{t_n} \cdots R^{\alpha_1}_{t_1} f$, where $\Phi^{(k)}_{t_k}$and $R^{\alpha_k}_{t_k}$ act only on $k$-th variable. For $H$-valued functions $f$, we define $\G^{{\ve{\alpha}}}$ by
\begin{equation}\label{def:sq} 
\G^{{\ve{\alpha}}} f(r) = \biggl( \int_{(\Rp)^n} \abs{ R^{{\ve{\alpha}}}_{\ve{t}}  f (r)}_H^2 \frac{d\ve{t}}{\ve{t}} \biggr)^{1/2},
\end{equation}
where $\frac{d\ve{t}}{\ve{t}}$ is the measure $\prod_{k=1}^n \frac{dt_k}{t_k}$ on $(\Rp)^n$. $g_\Phi$ is similarly defined as in \eqref{def:sq}, with $\Phi_{\ve{t}}$ in place of $R^{{\ve{\alpha}}}_{\ve{t}}$.

We first note that for $1<p<\infty$, there is a constant $C>0$ such that
\begin{equation*}
C^{-1} \norm{f}_{L^p(\mu_{\ve{d}},H)} \leq \norm{g_\Phi f}_{L^p(\mu_{\ve{d}})} \leq C \norm{f}_{L^p(\mu_{\ve{d}},H)}.
\end{equation*}
The second inequality follows from the case $n=1$ (see Appendix) by an iteration argument (See \cite[Section 2]{FeSt}). The first inequality follows from the second inequality by the polarization identity and $\norm{g_\Phi f}_{L^2(\mu_{\ve{d}})} = C \norm{f}_{L^2(\mu_{\ve{d}},H)}$.

There are product versions of the pointwise estimates \eqref{eqn:pomhan} and \eqref{eqn:maxpoint}. For ${\ve{\alpha}}>1/2$, we have
\begin{align}\label{eqn:pomprod}
g[T_m f](r) &\leq C \sup_{\ve{t} >0 } \norm{m(\ve{t}\cdot) \phi}_{L^2_{\ve{\alpha}}(\R^n)} \G^{\ve{\alpha}} f(r), \\
\label{eqn:pmaxprod}
M_m f (r) &\leq C \norm{m}_{L^2_{\alpha}(\R)} \G^{\ve{\alpha}} f(r),
\end{align}
where we additionally assume that $m$ is supported in $[1/2,2]^n$ in \eqref{eqn:pmaxprod}. We defer the proof of the estimates in the following section.

Given the pointwise estimates, Theorem \ref{thm:gtm} and \ref{thm:maxmul} are consequences of the following theorem.
\begin{thm} \label{thm:squareprod} 
Let $\ve{d}>1$ and $2 \leq p< \infty$. Then
\begin{equation*}
\norm{ \mathcal{G}^{\ve{\alpha}} f}_{L^p(\mu_{\ve{d}})} \leq C \norm{f}_{L^p(\mu_{\ve{d}},H)}
\end{equation*}
if and only if ${\ve{\alpha}} > \ve{\alpha}(\ve{d},p)$.
\end{thm}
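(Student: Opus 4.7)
The plan is to prove sufficiency of $\ve{\alpha} > \ve{\alpha}(\ve{d},p)$ by induction on the number of variables $n$, exploiting the fact that Theorem \ref{thm:square} is valid for functions taking values in an \emph{arbitrary} separable Hilbert space. The base case $n=1$ is Theorem \ref{thm:square} itself. For the inductive step, set $\ve{\alpha}' := (\alpha_2,\ldots,\alpha_n)$, $\ve{d}' := (d_2,\ldots,d_n)$, $\ve{t}' := (t_2,\ldots,t_n)$, and introduce the auxiliary Hilbert space $\widetilde{H} := L^2((\R_+)^{n-1}, d\ve{t}'/\ve{t}'; H)$. Define the $\widetilde{H}$-valued function $G$ on $(\R_+)^n$ by $G(r)(\ve{t}') := R^{\ve{\alpha}'}_{\ve{t}'} f(r)$. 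Since operators acting in different $r_k$-variables commute, a routine Fubini application yields the pointwise identity
\[
\mathcal{G}^{\ve{\alpha}} f(r)^2 = \int_0^\infty \bnorm{R^{\alpha_1}_{t_1}\bigl[G(\cdot, r_2,\ldots,r_n)\bigr](r_1)}_{\widetilde{H}}^2 \frac{dt_1}{t_1};
\]
that is, the multivariate square function is precisely a single-variable square function in the $r_1$ variable, applied to a $\widetilde{H}$-valued function.

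Applying Theorem \ref{thm:square} in $r_1$ with target Hilbert space $\widetilde{H}$ and with $\alpha_1 > \alpha(d_1,p)$ gives, for each $(r_2,\ldots,r_n)$,
\[
\int_0^\infty |\mathcal{G}^{\ve{\alpha}} f(r)|^p\, d\mu_{d_1}(r_1) \leq C^p \int_0^\infty |G(r)|_{\widetilde{H}}^p\, d\mu_{d_1}(r_1).
\]
I would then recognize that $|G(r_1,\ldots,r_n)|_{\widetilde{H}} = \mathcal{G}^{\ve{\alpha}'}[f(r_1,\cdot)](r_2,\ldots,r_n)$ is the $(n-1)$-variable product square function applied to the $r_1$-slice of $f$. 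Integrating over the remaining variables, swapping the order of integration via Fubini, and invoking the inductive hypothesis for $(\ve{d}',p)$ for each fixed $r_1$ yields the desired $L^p(\mu_{\ve{d}})$ bound.

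For the necessity half I would test on scalar-valued tensor products $f(r) = f_1(r_1)\cdots f_n(r_n)$. The commuting factorization $R^{\ve{\alpha}}_{\ve{t}} f = \prod_k R^{\alpha_k}_{t_k} f_k$ produces the pointwise factorization $\mathcal{G}^{\ve{\alpha}} f(r) = \prod_k \mathcal{G}^{\alpha_k}_{d_k} f_k(r_k)$, so both sides of the putative bound factor as products over $k$. If $\alpha_k \leq \alpha(d_k,p)$ for some $k$, the single-variable necessity (noted just before Theorem \ref{thm:square}) supplies some $f_k$ witnessing failure, and the tensor product violates the multivariate estimate.

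The main (pseudo)obstacle is organizational rather than analytic: one must carefully unwrap definitions so that, after exploiting commutativity across the $r$-variables, $\mathcal{G}^{\ve{\alpha}}$ is recognized as an iteration of single-variable square functions taking values in successively enlarged $L^2$-spaces. The essential conceptual ingredient is that Theorem \ref{thm:square} is proved for an arbitrary separable Hilbert space $H$, which is precisely what allows the induction to close.
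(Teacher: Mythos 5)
Your proposal is correct and follows essentially the same route as the paper: necessity via tensor products and the pointwise factorization $\mathcal{G}^{\ve{\alpha}} f = \prod_k \mathcal{G}^{\alpha_k} f_k$, and sufficiency via the Fefferman--Stein iteration, realizing the multivariate square function as a single-variable one acting on a function valued in an enlarged Hilbert space and closing the induction using that Theorem \ref{thm:square} holds for arbitrary separable $H$. The only (immaterial) difference is bookkeeping: you peel off the first variable and apply the single-variable theorem before the inductive hypothesis, whereas the paper peels off the last variable and applies them in the opposite order.
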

\begin{proof}
For the necessary condition, it is enough to consider a function $f(r)=\prod_{1\leq k \leq n} f_k(r_k)$ such that $f_k \in L^p(\mu_{d_k},H)$. Then $\G^{\ve{\alpha}} f(r) = \prod_{k=1}^n \G^{{\alpha}_k} f_k(r_k)$, thus the necessary condition follows from Theorem \ref{thm:square}.

An iteration argument in \cite{FeSt} gives the sufficient condition, but we shall include the argument for the convenience of the reader. We shall use induction on $n$ with the case $n=1$ given by Theorem \ref{thm:square}. Suppose that the theorem is true for the dimension $n-1$. Let ${\ve{\alpha}} = (\ve{\alpha}',\alpha_n)\in \R^{n-1} \times \R$ such that ${\ve{\alpha}} > \ve{\alpha}(\ve{d},p)$. Set $F(r',r_n) = R^{\alpha_n}_{t_n}[f(r', \cdot)] (r_n)$. We regard $F$ as a $\tilde{H}$-valued function, where $\tilde{H} = L^2(\Rp, \frac{dt_n}{t_n}, H)$ is a Hilbert space. 

We have $|F(r',r_n)|_{\tilde{H}} = \G^{\alpha_n} [f(r',\cdot)](r_n)$ and $\G^{\ve{\alpha}} f(r) = \G^{\ve{\alpha}'}[F(\cdot,r_n)](r')$. Thus, $\norm{\G^{\ve{\alpha}} f}_{L^p(\mu_{\ve{d}})}^p$ is equal to
\begin{align*} 
&\int_0^\infty \int_{(\Rp)^{n-1}} |\G^{\ve{\alpha}'} [F(\cdot,r_n)](r')|^p d\mu_{{\ve{d}}'}(r') d\mu_{d_n}(r_n) \\
&\les \int_0^\infty \int_{(\Rp)^{n-1}} |F(r',r_n)|_{\tilde{H}}^p d\mu_{{\ve{d}}'}(r') d\mu_{d_n}(r_n) \\
&= \int_{(\Rp)^{n-1}} \int_0^\infty |\G^{\alpha_n} [f(r',\cdot)](r_n)|^p d\mu_{d_n}(r_n) d\mu_{{\ve{d}}'}(r') \\
&\les \int_{(\Rp)^{n-1}} \int_0^\infty |f(r',r_n)|_H^p d\mu_{d_n}(r_n) d\mu_{{\ve{d}}'}(r') = C \norm{f}_{L^p(\mu_{\ve{d}},H)}^p,
\end{align*}
where the first inequality follows from the induction hypothesis.
\end{proof}

\subsection{Pointwise estimates} \label{sec:RL}
In this section, we prove \eqref{eqn:pomprod} and \eqref{eqn:pmaxprod}. The Riemann-Liouville lemma on fractional differentiation played an important role for the pointwise estimate \eqref{eqn:pom} given in \cite{CarR}. We shall need a product version of the Riemann-Liouville lemma.

Let ${\ve{\alpha}} = (\alpha_1, \cdots, \alpha_n)$ and ${\ve{\alpha}} \geq 0$. We define the fractional differentiation $\fd$ for $f\in L^2(\R^n)$ by $$\ft\left[ D^{{\ve{\alpha}}} f \right](\xi) = \prod_{k=1}^n (-i\xi_k)^{\alpha_k} \ft{f}(\xi),$$
which coincides with the usual differentiation up to constant when ${\ve{\alpha}} \in \Z^n$. In addition, let $I^{\ve{\alpha}}$ be the fractional integral defined by 
\begin{equation*}
I^{\ve{\alpha}}f (x)=  \int_{\R^n} \prod_{k=1}^n \frac{(u_k-x_k)_+^{\alpha_k-1} }{\Gamma(\alpha_k)} f(u)du.
\end{equation*}

We shall work with the Sobolev space $L^2_{\ve{\alpha}}(\R^n)$ defined in Section \ref{sec:mul}. 
\begin{lem} [Riemann-Liouville] \label{lem:RiLi} Let ${\ve{\alpha}}>1/2$. Suppose that $f \in L_{\ve{\alpha}}^2(\R^n)$ and that $\supp f \subset \prod_{k=1}^n (-\infty,a_k]$. Then $\supp \fd f  \subset  \prod_{k=1}^n (-\infty,a_k]$ and 
\begin{equation*}
 f(x) = I^{\ve{\alpha}} [\fd f](x), \text{ a.e. }
\end{equation*}
\end{lem}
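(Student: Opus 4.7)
The plan is to reduce to the one-dimensional Riemann--Liouville lemma (as in \cite{CarR}) and iterate. Both $D^{\vec{\alpha}}$ and $I^{\vec{\alpha}}$ factor as compositions of single-variable operators,
\[
D^{\vec{\alpha}} = D_1^{\alpha_1}\cdots D_n^{\alpha_n}, \qquad I^{\vec{\alpha}} = I_1^{\alpha_1}\cdots I_n^{\alpha_n},
\]
in which $D_k^{\alpha_k}$ and $I_k^{\alpha_k}$ act only on the $k$-th coordinate; these single-variable operators commute across distinct indices $k$ since they are partial Fourier multipliers in disjoint coordinate directions.

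The first step is to record a Hilbert-valued version of the 1D lemma: for a separable Hilbert space $\mathcal{K}$ and $\alpha>1/2$, if $g\in L^2_\alpha(\R;\mathcal{K})$ has $\supp g \subset(-\infty,a]$, then $\supp D^\alpha g\subset(-\infty,a]$ and $g(x)=I^\alpha[D^\alpha g](x)$ almost everywhere. The scalar proof---based on Plancherel and the Paley--Wiener criterion that $\widehat{g}$ extends to an $H^2$-function on the upper half-plane, together with the holomorphy of $(-iz)^\alpha$ there---transfers verbatim once one expands $g$ along an orthonormal basis of $\mathcal{K}$.

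The second step is an induction on $n$. The tensor-product structure of the weight $\prod_k(1+|\xi_k|)^{2\alpha_k}$ yields, by Fubini on the Fourier side, the isometric identification
\[
L^2_{\vec{\alpha}}(\R^n) \cong L^2_{\alpha_n}\bigl(\R;\,L^2_{\vec{\alpha}'}(\R^{n-1})\bigr), \qquad \vec{\alpha}'=(\alpha_1,\dots,\alpha_{n-1}).
\]
Under this identification, $f$ is an $L^2_{\vec{\alpha}'}(\R^{n-1})$-valued function of $x_n$ alone. Applying the vector-valued 1D lemma in the last variable gives $f = I_n^{\alpha_n}[D_n^{\alpha_n}f]$ and $\supp(D_n^{\alpha_n}f)\subset \R^{n-1}\times(-\infty,a_n]$. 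For a.e.\ $x_n$, the slice $D_n^{\alpha_n}f(\cdot,x_n)$ lies in $L^2_{\vec{\alpha}'}(\R^{n-1})$ (by Fubini) and is still supported in $\prod_{k<n}(-\infty,a_k]$, so the inductive hypothesis yields $D_n^{\alpha_n}f = I_1^{\alpha_1}\cdots I_{n-1}^{\alpha_{n-1}}[D^{\vec{\alpha}}f]$ with $\supp(D^{\vec{\alpha}}f)\subset\prod_k(-\infty,a_k]$. Substituting back and using commutativity of the $I_k^{\alpha_k}$ delivers the identity $f = I^{\vec{\alpha}}[D^{\vec{\alpha}}f]$.

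The principal obstacle is the Fubini bookkeeping that transports Sobolev regularity to slices; once the tensor-product identification of $L^2_{\vec{\alpha}}(\R^n)$ is in place, the rest is essentially a combinatorial exercise in commuting single-variable operators. The other point requiring mild care is the Hilbert-valued upgrade of the classical 1D lemma, but this is routine because Plancherel and the Paley--Wiener criterion extend to $\mathcal{K}$-valued $L^2$ functions with no modification.
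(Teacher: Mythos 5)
Your proof is correct, but it follows a genuinely different route from the paper's. The paper makes a direct multi-parameter regularization: it replaces $D^{\vec{\alpha}}$ by the operator $D^{\vec\alpha}_\epsilon$ with multiplier $\prod_k(\epsilon-i\xi_k)^{\alpha_k}$, observes that $I^{\vec\alpha}_\epsilon$ (convolution with the product kernel $\prod_k\frac{(-x_k)_+^{\alpha_k-1}}{\Gamma(\alpha_k)}e^{\epsilon x_k}$) inverts it on $L^2$, derives support preservation from the factorization $D^{\alpha}_\epsilon = D^{[\alpha]+1}_\epsilon I^{[\alpha]+1-\alpha}_\epsilon$ in each variable, and gets the inversion formula by a Cauchy--Schwarz limit as $\epsilon\to 0$; the passage from $n=1$ to general $n$ is then simply to run the same estimates in all $n$ variables simultaneously. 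You instead iterate the $n=1$ case: upgrade it to Hilbert-valued functions, use the tensor identification $L^2_{\vec\alpha}(\R^n)\cong L^2_{\alpha_n}(\R;L^2_{\vec\alpha'}(\R^{n-1}))$, apply the one-variable lemma in $x_n$, and then invoke the inductive hypothesis on the slices $D_n^{\alpha_n}f(\cdot,x_n)$. Both routes work; yours is conceptually cleaner in that it treats the $n=1$ lemma as a black box, at the modest cost of the Hilbert-valued upgrade. Two points deserve a remark. First, your description of the $n=1$ proof (Paley--Wiener, $H^2$ of a half-plane) is not the $\epsilon$-regularization argument the paper sketches, and the claim that $(-iz)^\alpha\widehat g(z)$ lies in $H^2$ off the real axis needs some care because of the polynomial growth in $|z|$; this is immaterial for your iteration, since the paper's own $\epsilon$-regularization argument also lifts routinely to Hilbert-valued $L^2$. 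Second, in the inductive step the assertion that the slice $D_n^{\alpha_n}f(\cdot,x_n)$ is still supported in $\prod_{k<n}(-\infty,a_k]$ should be justified: since $D_n^{\alpha_n}$ acts only on $x_n$ it preserves the closed subspace $\mathcal K_0\subset L^2_{\vec\alpha'}(\R^{n-1})$ of functions supported there, so one can (and cleanest is to) apply the vector-valued $n=1$ lemma with $\mathcal K=\mathcal K_0$ from the start.
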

\begin{proof}
A proof of the statement for $n=1$ was given in \cite{CarR}. The proof continues to work for the case $n>1$ with a few minor changes. For the convenience of the reader, we shall include the proof for $n=1$ and indicate the changes for $n>1$. 

First consider the operators $D^{\alpha}_\epsilon$ associated with the Fourier multipliers $(\epsilon-i\xi)^{\alpha}$ for $\epsilon>0$. Then $D^{\alpha}_\epsilon f \to D^{\alpha} f$ in $L^2$ as $\epsilon \to 0$ by the dominated convergence theorem.

With an aid of Cauchy's theorem, one may verify that 
\begin{equation*}
g_\epsilon^{\alpha}(x) = \ift[(\epsilon-i\xi)^{-{\alpha}}](x) = \frac{1}{\Gamma(\alpha)} (-x)_+^{\alpha-1} e^{\epsilon x} \in L^1.
\end{equation*}
See \eqref{eqn:kappa} for a related calculation. Thus, the convolution operator $I^\alpha_\epsilon f = f*g_\epsilon^\alpha$ is the inverse of $D^\alpha_\epsilon$ and $I^\alpha_\epsilon f \in L^2$. 

For the support condition, observe that 
\begin{equation*}
D^{\alpha}_\epsilon f(x) = D_\epsilon^{[\alpha]+1} h(x),
\end{equation*}
where $h= I_{\epsilon}^{[\alpha]+1-\alpha} f$ and $[\alpha]$ is the least integer such that $[\alpha] \geq \alpha$. Then the support of $h$ is contained in $(-\infty,a]$. Moreover, $D^{[\alpha]+1}_\epsilon h$ is a linear combination of (non-fractional) derivatives of $h$, preserving the support of $h$.

Next, for a fixed $x\in (-\infty,a)$, one estimates $|f(x) - I^{\alpha} ( D^\alpha f)(x)|$ by
\begin{align*}
 &\frac{1}{\Gamma(\alpha)} \int_x^a (u-x)^{\alpha-1} \abs{ e^{-\epsilon (u-x)} D^\alpha_\epsilon f(t) - D^\alpha f(u) } du \\
&\leq \frac{1}{\Gamma(\alpha)} \norm{(\cdot-x)^{\alpha-1}}_{L^2(I_x)} \norm{e^{-\epsilon (\cdot-x)} D^\alpha_\epsilon f - D^\alpha f}_{L^2(I_x)},
\end{align*}
where $I_x = (x,a)$. The first norm is finite and the second norm tends to $0$ as $\epsilon \to 0$.

The proof easily extends to the case $n>1$. Indeed, one may extend $D^{\alpha}_\epsilon$ by $D^{\vec{\alpha}}_\epsilon$ which is associated with the Fourier multipliers $\prod_{k=1}^n (\epsilon-i\xi_k)^{\alpha_k}$ and make similar multi-parameter extensions.
\end{proof}

With Lemma \ref{lem:RiLi}, product versions of the pointwise estimates given in \cite{CarR} can be obtained without much additional work. We conclude this section with proofs of \eqref{eqn:pomprod} and \eqref{eqn:pmaxprod}.
\begin{proof}[Proof of \eqref{eqn:pomprod}]
We may assume that $\norm{m(\ve{t}\cdot)\phi}_{L^2_{\ve{\alpha}}(\R^n)}<\infty$ for each $\ve{t}$. Fix $\ve{t} \in (\Rp)^n$, and let $h(\rho) = m(\ve{t}\rho)\phi(\rho)$. Then by the Riemann-Liouville Lemma, the support of $D^{\vec{\alpha}} h$ is contained in $(-\infty,2]^n$. Moreover,
\begin{align*}
&\h [\Phi_{\ve{t}} [T_mf] ](\rho) = \prod_{k=1}^n (\rho_k/t_k) \phi_k(\rho_k/t_k) m(\rho) \h f(\rho) \\
&= C_{\ve{\alpha}} \int_{\R^n} \prod_{k=1}^n \frac{\rho_k}{t_k} \biggl(u_k-\frac{\rho_k}{t_k}\biggr)_+^{\alpha_k -1} \h f(\rho)  D^{\vec{\alpha}} h(u)du.\\
&= C_{\ve{\alpha}} \int_{[0,2]^n} \prod_{k=1}^n u_k^{\alpha_k} \frac{\rho_k}{u_k t_k} \biggl(1-\frac{\rho_k}{u_kt_k}\biggr)_+^{\alpha_k -1} \h f(\rho)  D^{\vec{\alpha}} h(u)du
\end{align*}
for a.e. $\rho\in (\Rp)^n$. Applying $\h$ to both sides of the equality,
\begin{align*}
|\Phi_{\ve{t}} [T_mf] (r)|_H &= C_{\ve{\alpha}} \abs{\int_{[0,2]^n} \prod_{k=1}^n u_k^{\alpha_k} R^{\ve{\alpha}}_{\ve{t}u} f(r) D^{\vec{\alpha}} h(u)du}_H. \\
&\les \sup_{{\ve{t}}>0} \norm{m({\ve{t}}\cdot)\phi}_{L^2_{\ve{\alpha}}(\R^n)} \biggl(\int_{[0,2]^n} |R^{\ve{\alpha}}_{\ve{t}u} f(r)|_H^2 du\biggr)^{1/2}.
\end{align*}

The proof is complete after taking the $L^2((\Rp)^n, \frac{d\ve{t}}{\ve{t}})$ norm followed by Fubini's theorem and a change of variable.
\end{proof}

\begin{proof}[Proof of \eqref{eqn:pmaxprod}]
First observe that one may write
\begin{equation*}
\frac{m(\rho)}{\rho_1\cdots \rho_n} = m(\rho) \chi(\rho)
\end{equation*}
for a smooth function $\chi$ supported in $[1/4,4]^n$ if $m$ supported in $[1/2,2]^n$.

We apply the product version of Riemann-Liouville lemma to the function $m\chi$ without the use of the square function $g$. Arguing as in the proof of \eqref{eqn:pomprod}, we obtain
\begin{align*}
|T_{m(\cdot/\ve{t})}f(r)| &= C_{\ve{\alpha}} \abs{\int_{(\Rp)^n} R^{\ve{\alpha}}_{\ve{t}u}f(r) u^{{\ve{\alpha}}+\ve{1}} D^{\vec{\alpha}} [m\chi](u)\frac{du}{u}} \\
&\les \mathcal{G^{\ve{\alpha}}}f(r) \biggl( \int_{(\Rp)^n} \abs{u^{{\ve{\alpha}}+\ve{1}} D^{\vec{\alpha}} [m\chi](u)}^2\frac{du}{u}\biggr)^{1/2} \\
&\les \mathcal{G^{\ve{\alpha}}}f(r) \norm{m\chi}_{L^2_{\ve{\alpha}}(\R^n)},
\end{align*}
where $\ve{1}=(1,\cdots,1)$, $\frac{du}{u}= \prod_{k=1}^n \frac{du_k}{u_k}$, and we have used the fact that the support of the function $D^{\vec{\alpha}} [m\chi]$ is contained in $(-\infty,2]^n$. Finally, we have $\norm{m\chi}_{L^2_{\ve{\alpha}}(\R^n)} \les \norm{m}_{L^2_{\ve{\alpha}}(\R^n)}$ since $\hat{\chi}$ is a Schwartz function. This completes the proof.
\end{proof}

\section{Reductions toward Theorem \ref{thm:squareend}}\label{sec:First}
\subsection{Littlewood-Paley theory} \label{sec:LiPa}
Split the multiplier of $R^\alpha$ as 
$$\rho(1-\rho)_+^{\alpha-1} = \rho \chi_0(\rho) + (1-\rho)_+^{\alpha-1} \chi(\rho),$$
where $\chi_0, \chi\in \s(R_+)$ and $\chi_0$ and $\chi$ are supported on $[0,3/4]$ and $[1/2,2]$, respectively. Then $g_{\Phi}$ with $\Phi(\rho)=\rho \chi_0(\rho)$ is a standard Littlewood-Paley function, which is bounded on $L^p(\mu_d)$ for $1<p<\infty$. Thus, we may assume that 
\begin{equation*}
\han[R^\alpha_t f](\rho) = \biggl(1-\frac{\rho}{t}\biggr)_+^{\alpha-1} \chi\biggl(\frac{\rho}{t}\biggr) \han f(\rho).
\end{equation*}

Using this reduction and Littlewood-Paley theory, we may localize the $t$-integral on the interval $[1,2]$. Choose a cut off function $\eta \in C_0^\infty(\R^+)$ supported on $(1/8,8)$ such that $\eta(\rho) = 1$ on $[1/4,4]$ and define the Littlewood-Paley projection $L_j$ by $\han[L_j f] (\rho) = \eta(2^{-j} \rho) \han{f}(\rho)$. We have
\begin{align*}
[\G^\alpha f (r)]^2 &= \int_0^\infty |R^\alpha_t  f (r) |^2_H \frac{dt}{t} = \sum_j \int_{2^j}^{2^{j+1}} |R^\alpha_t  f (r) |^2_H \frac{dt}{t} \\
&= \sum_j \int_{1}^{2} |R^\alpha_{2^jt}  f (r) |^2_H \frac{dt}{t} =\sum_j \int_{1}^{2} |R^\alpha_{2^jt}  [L_j f] (r) |^2_H \frac{dt}{t},
\end{align*}
where the last equality follows from $\eta(\rho) \chi(\rho/t) = \chi(\rho/t)$ for $t\in [1,2]$.
Thus by the Littlewood-Paley inequality (a discrete version of Theorem \ref{thm:gfuncH}), Theorem \ref{thm:squareend} follows from
\begin{equation} \label{eqn:square0}
\bnorm{\biggl( \sum_j \int_1^2 |R^\alpha_{2^jt}  f_j |^2_H \frac{dt}{t} \biggr)^{1/2} }_{L^p(\mu_d)} \les
\bnorm{\biggl( \sum_j |f_j|_H^2 \biggr)^{1/2} }_{L^{p,2}(\mu_d)}
\end{equation}
for $p = 2d/(d-2\alpha)$ and $1/2 < \alpha < d/2$.

\subsection{Dualization}
Let us denote by $L^2_t$ the Hilbert space $L^2([1,2], dt/t)$.
If $g_j$ is a function which takes values in $L^2_t(H^*)$, namely that $(g_{j})_t(r) := [g_j(r)](t) \in H^*$, then
\begin{equation*}
\int_0^\infty \int_1^2 \inn{R^\alpha_{2^jt}  f_j(r), (g_{j})_t(r)}\frac{dt}{t} d\mu_d(r)  = \int_0^\infty \inn{ f_j(s), \mathcal{R}^\alpha_j g_j(s) } d\mu_d(s),
\end{equation*}
where 
\begin{equation*}
\mathcal{R}^\alpha_j g(s) = \int_1^2 R^\alpha_{2^jt}  g_t(s) \frac{dt}{t}
\end{equation*}
for $L^2_t(H^*)$-valued functions $g$. Thus by duality, (\ref{eqn:square0}) is equivalent to 

\begin{equation} \label{eqn:square}
\bnorm{\biggl( \sum_j |\mathcal{R}^\alpha_j g_j |^2_{H} \biggr)^{1/2} }_{L^{p,2}(\mu_d)} \les
\bnorm{\biggl( \sum_j |g_j|_{L^2_t(H)}^2 \biggr)^{1/2} }_{L^{p}(\mu_d)}
\end{equation}
for $p = 2d/(d+2\alpha)$ and $1/2 < \alpha < d/2$.

\subsection{Decomposition}
We make a dyadic decomposition following \cite{GaSe}. For $m\in \Z$, we let $I_m = [2^m,2^{m+1})$, $I_m^* = [2^{m-1},2^{m+2})$, ${I_m^{**}} = [2^{m-2},2^{m+3})$, $L_{m} = (0,2^{m})$, and $R_{m}=[2^{m},\infty)$. Then we may write
\begin{align*}
\mathcal{R}^\alpha_j f &= \sum_m [ E^\alpha_{j,m} f + S^\alpha_{j,m} f+ V^\alpha_{j,m}f ] = \sum_m E^\alpha_{j,m-j} f + \sum_m S^\alpha_{j,m} f + \sum_m  V^\alpha_{j,m-j}f,
\end{align*}
where $E^\alpha_{j,m} f$, $S^\alpha_{j,m} f$ and $V^\alpha_{j,m} f$ are defined by $\mathcal{R}^\alpha_j(f \chi_{I_m})$ times the characteristic functions  $\chi_{L_{m-2}}$, $\chi_{I_m^{**}}$ and $\chi_{R_{m+3}}$, respectively.

We shall prove the following propositions in Section \ref{sec:proof}.
\begin{prop} \label{prop:HE}
Let $1/2<\alpha<d/2$ and $p = 2d/(d+2\alpha)$. Then there is a constant $\delta(p) > 0$ such that
\begin{equation*}
\norm{V^\alpha_{j,m-j}f}_{\lpl} \leq C 2^{-|m|\delta(p)} \norm{f\chi_{I_{m-j}}}_{L^{p,\infty}(\mu_d,L^2_t(H))},
\end{equation*}
where the constant $C$ does not depend on $j$ and $m$.
\end{prop}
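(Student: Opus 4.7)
The plan is to exploit the physical-space separation (by a factor of at least $8$) between the input support $I_{m-j}$ and the output support $R_{m-j+3}$, relative to the frequency scale $2^j$ of the multiplier of $R^\alpha_{2^j t}$, which introduces a dimensionless decay parameter of order $2^m$. First I would rescale to eliminate $j$: using the dilation identity $R^\alpha_{2^j t} = D_{2^j} R^\alpha_t D_{2^{-j}}$ (with $D_\lambda f(r) := f(\lambda r)$) together with the scaling $\norm{D_\lambda f}_{L^{p,q}(\mu_d)} = \lambda^{-d/p}\norm{f}_{L^{p,q}(\mu_d)}$, one checks that $I_{m-j}\mapsto I_m$, $R_{m-j+3}\mapsto R_{m+3}$, and both sides of the claimed inequality pick up the same factor $2^{jd/p}$. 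Hence it suffices to prove the bound for $j=0$ with input support $I_m$, output support $R_{m+3}$, and $t\sim 1$.

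Next I would analyze the scalar kernel of the reduced operator, which after writing $R^\alpha_t$ via Hankel inversion and substituting $\rho = t\sigma$ takes the form
\[
k_t(r,s) = t^d\int_0^\infty B_d(tr\sigma)\,B_d(ts\sigma)\,(1-\sigma)_+^{\alpha-1}\chi(\sigma)\,\sigma^{d-1}\,d\sigma,\quad r\geq 4s,\ t\sim 1.
\]
Using the standard decomposition $B_d(x) = x^{-(d-1)/2}(a_+(x)e^{ix}+a_-(x)e^{-ix})$ for $x\gtrsim 1$ (with symbols $a_\pm$ of order $0$) and the Taylor expansion $B_d(x) = c_d + O(x^2)$ for $x\lesssim 1$, the analysis splits by the sign of $m$. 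For $m\geq 0$, both $tr\sigma\sim r$ and $ts\sigma\sim s$ are large, the product of oscillatory Bessel factors generates phases $\pm(r\pm s)\sigma$ of magnitude $\gtrsim 2^m$, and integration by parts in $\sigma$ against the boundary-singular factor $(1-\sigma)_+^{\alpha-1}\chi(\sigma)$—whose pointwise H\"older exponent $\alpha-1$ is upgraded to $\alpha-1/2$ by the $L^2_t$-averaging in $t$—delivers a kernel bound of the form $|k_t(r,s)|\lesssim (rs)^{-(d-1)/2}(r-s)^{-N}$. For $m<0$, where $s\sim 2^m\ll 1$ and $B_d(ts\sigma)\approx c_d$, the kernel reduces to a pure Bochner-Riesz-type kernel in $r$ times a near-constant factor, and the required decay $2^{-|m|\delta}$ comes from the small volume $\mu_d(I_m)\sim 2^{md}$ of the input support combined with the $L^{p,\infty}$ assumption via $|\int_{I_m}g(s)d\mu_d(s)|\lesssim 2^{md/p'}\norm{g}_{L^{p,\infty}}$.

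To pass from kernel bounds to the Lorentz-space estimate, I would use the vector-valued Cauchy--Schwarz reduction
\[
|V^\alpha_m g(r)|_H\leq \chi_{R_{m+3}}(r)\int_0^\infty \tilde K_m(r,s)\,|g(s)|_{L^2_t(H)}\,d\mu_d(s),\quad \tilde K_m(r,s) := \Big(\int_1^2|k_t(r,s)|^2\tfrac{dt}{t}\Big)^{1/2},
\]
supplemented, for the $m<0$ tail $r\gg 1$ where $\tilde K_m$ alone has slow Bochner-Riesz-type decay, by the sharper observation that $k_t(r,0)$ oscillates in $t$ with frequency $r$, so the full operator $\int_1^2 a(t)k_t(r,0)dt/t$ is controlled by the Fourier transform of $a/t\,\chi_{[1,2]}$ at $\pm r$, which lies in $L^2(r\,dr)$ by Plancherel; this yields the missing cancellation at the endpoint via real interpolation between $L^2\to L^2$ (trivial) and scalar $L^{p_0}\to L^{p_0}$ bounds for $p_0$ away from the endpoint. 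A Schur test on $\tilde K_m$ handles the $m\geq 0$ regime and the bounded-$r$ part for $m<0$.

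The main obstacle will be obtaining $2^{-|m|\delta}$ with a uniform $\delta = \delta(p)>0$ at the endpoint exponent $p = 2d/(d+2\alpha)$, where no further interpolation is available on $p$. For $m>0$ the $C^{\alpha-1}$-regularity of the Bochner-Riesz factor limits the order of integration by parts, and the $L^2_t$-averaging must be leveraged to the full extent to produce the endpoint gain. For $m<0$ the output variable $r\in[2^{m+3},\infty)$ spans both the small- and large-argument Bessel regimes, so the kernel analysis must be split accordingly; moreover, the Bochner-Riesz tail decay $r^{-(d-1)/2-\alpha}$ is too slow to lie in $L^{p,2}(\mu_d)$ at the endpoint, so the argument must exploit the oscillation in $t$ (through the $L^2_t$-norm on the input side) to extract the extra half-power needed to close the estimate.
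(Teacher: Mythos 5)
Your high-level plan shares some correct ideas with the paper's proof (the rescaling to $j=0$, the Bessel asymptotic expansion, and the role of Plancherel for the $L^2_t$-averaging), but the core mechanism you propose does not close the estimate, and the actual proof is structured quite differently.

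\textbf{The Schur test fails for $m\geq 0$.} Your claimed kernel bound $|k_t(r,s)|\lesssim (rs)^{-(d-1)/2}(r-s)^{-N}$ with $N$ arbitrary cannot be correct: the factor $(1-\sigma)^{\alpha-1}_+\chi(\sigma)$ has only H\"older regularity $\alpha-1$, so the Fourier-side decay of the kernel is limited. After $L^2_t$-averaging one gains another half power (this is exactly what the paper's Proposition \ref{prop:Wf} encodes: the main term decays like $(1+|x|)^{-\alpha}$ times a convolution with $\ift[f^\alpha]$, and the error like $(1+|x|)^{-\alpha-1}$), but one never reaches arbitrary $N$. With the correct decay $r^{-\alpha}$ for the main term, the Schur integral $\int_{2^{m+3}}^\infty (rs)^{-(d-1)/2}r^{-\alpha}\,d\mu_d(r)$ requires $\alpha>(d+1)/2$ to converge, which is incompatible with $\alpha<d/2$. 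So a pointwise kernel bound combined with a Schur test cannot handle the oscillatory main term for $m\geq 0$; the paper instead exploits the $L^2$-orthogonality carried by $\ift[f^\alpha]$ via Plancherel, which captures mean-square cancellation rather than pointwise size.

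\textbf{The interpolation step for the Lorentz endpoint does not reach the conclusion.} You propose to obtain the $L^{p,2}$-output and $L^{p,\infty}$-input by real interpolation between $L^2\to L^2$ and $L^{p_0}\to L^{p_0}$ with $p_0$ away from the endpoint. Since $p=2d/(d+2\alpha)<2$, interpolating these two endpoints can only produce exponents strictly between $p_0$ and $2$; to land at $p$ one would need $p_0<p$, for which the bound would have to be established separately and would be at least as hard. Moreover, the claimed estimate $L^{p,\infty}\to L^{p,2}$ is strictly stronger than $L^p\to L^p$ (since $L^{p,2}\subset L^p\subset L^{p,\infty}$ for $p<2$), so neither the Schur test nor straightforward interpolation produces it. The paper instead gets the Lorentz exponents directly: after Proposition \ref{prop:Wf} and the change of variable $r\to r\pm s$, the $L^{p,2}(\mu_d)$-norm in the output variable is bounded by a quantity independent of $r$ (via a Lorentz H\"older plus Plancherel, where $\alpha=d(\tfrac1p-\tfrac12)$ is used exactly once to place $(1+|\cdot|)^{-\alpha}$ in $L^{(1/p-1/2)^{-1},\infty}(\nu_d)$), leaving the $s$-dependence only through $(1+s)^{-(d-1)/2}$; then O'Neil's Lorentz H\"older inequality on the remaining $s$-integral produces the $L^{p,\infty}$-input and the gain $\norm{\chi_{I_m}(1+\cdot)^{-(d-1)/2}}_{L^{p',1}(\mu_d)}\lesssim \min(2^{md/p'},2^{-m(d(1/p-1/2)-1/2)})$, which is precisely the $2^{-|m|\delta(p)}$ factor in both signs of $m$, with no case split needed.

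In short, what is missing in your proposal is the structural observation that the kernel estimate (the paper's Propositions \ref{prop:kernel0} and \ref{prop:Wf}) presents the oscillation through a translation-invariant convolution of $\ift[f^\alpha]$ with a rapidly decaying kernel, so that Plancherel can be applied in the output variable \emph{uniformly in $s$}, reducing everything to a single Lorentz H\"older step in $s$. The Schur/interpolation machinery is not powerful enough at the endpoint.
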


\begin{prop} \label{prop:E}
Let $\alpha>1/2$ and $1\leq p \leq 2$. Then there is a constant $\delta > 0$ such that
\begin{equation*}
\norm{E^\alpha_{j,m-j}f}_{\lp} \leq C 2^{-|m|\delta} \norm{f\chi_{I_{m-j}}}_{\lpt},
\end{equation*}
where the constant $C$ does not depend on $j$ and $m$.
In fact, one may take $\delta=\min( \alpha-1/2,d)$.
\end{prop}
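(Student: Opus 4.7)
The plan is to reduce to $j=0$ by scaling, pass to a scalar integral kernel via Cauchy--Schwarz in $t$, and then split into cases based on the sign of $m$.

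First, using the Hankel dilation identity $\han(f(\cdot/s))(\sigma) = s^d \han f(s\sigma)$, one verifies directly that $R^\alpha_{2^j t}[g_t(2^j \cdot)](r) = (R^\alpha_t g_t)(2^j r)$. Setting $f_t(r) = g_t(2^j r)$ makes $f_t$ supported in $I_{m-j}$ iff $g_t$ is supported in $I_m$, and the substitution $u = 2^j r$ shows both sides of the claimed inequality scale by the common factor $2^{-jd/p}$. This reduces the claim to the case $j=0$: one needs
\begin{equation*}
\norm{E^\alpha_{0,m} g}_{L^p(\mu_d, H)} \leq C 2^{-|m|\delta} \norm{g\chi_{I_m}}_{L^p(\mu_d, L^2_t(H))}.
\end{equation*}

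Second, the operator $R^\alpha_t$ has the integral-kernel representation $R^\alpha_t h(r) = \int K_t(r, \rho)\, h(\rho)\, d\mu_d(\rho)$ with
$K_t(r, \rho) = \int_0^\infty B_d(r\sigma) B_d(\rho\sigma) (1-\sigma/t)_+^{\alpha-1} \chi(\sigma/t)\, \sigma^{d-1} d\sigma$. Cauchy--Schwarz in $t \in [1,2]$ then yields
\begin{equation*}
|E^\alpha_{0,m} g(r)|_H \leq \chi_{L_{m-2}}(r) \int_{I_m} \tilde K(r, \rho)\, |g(\rho)|_{L^2_t(H)}\, d\mu_d(\rho),
\end{equation*}
where $\tilde K(r, \rho) = (\int_1^2 |K_t(r, \rho)|^2 dt/t)^{1/2}$, reducing the claim to a scalar $L^p(\mu_d)\to L^p(\mu_d)$ bound with decay in $|m|$ for the integral operator with kernel $\tilde K \cdot \chi_{L_{m-2}\times I_m}$.

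For $m \leq 0$ both $L_{m-2}$ and $I_m$ lie in a bounded interval near the origin, so $r\sigma$ and $\rho\sigma$ stay in the non-oscillatory regime of $B_d$, and the hypothesis $\alpha > 1/2$ makes $(1-\sigma/t)_+^{\alpha-1}$ locally integrable in $\sigma$. Thus $\tilde K = O(1)$ uniformly, and H\"older's inequality combined with $\mu_d(L_{m-2}), \mu_d(I_m) \sim 2^{md}$ gives $\norm{E^\alpha_{0,m} g}_{L^p} \les 2^{md} \norm{g\chi_{I_m}}_{L^p(L^2_t)} = 2^{-|m|d}\norm{g\chi_{I_m}}_{L^p(L^2_t)}$, matching $\delta = d$. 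For $m > 0$, I would use the Bessel asymptotic $B_d(x) \sim c_d x^{-(d-1)/2}\cos(x-\phi_d)$ for large $x$ together with the classical $|\xi|^{-\alpha}$ Fourier decay of $(1-\sigma/t)_+^{\alpha-1}\chi(\sigma/t)$ to derive the pointwise estimate $|\tilde K(r,\rho)| \les \min(1, r^{-(d-1)/2})\, \rho^{-(d-1)/2-\alpha}$ on $L_{m-2}\times I_m$, and then combine an $L^1\to L^1$ Schur bound with an $L^2\to L^2$ Plancherel/Hilbert--Schmidt bound (exploiting the $L^2_t$ averaging) and interpolate to produce the rate $\delta = \alpha - 1/2$ for $1 \leq p \leq 2$.

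The main obstacle is the sharp rate $\alpha - 1/2$ in the $m > 0$ regime: a naive Schur test on the above pointwise kernel bound yields only decay at rate $\alpha - 1$, which is ineffective for $\alpha \in (1/2, 1]$. The half-derivative gap is closed by the structural observation that $\mathcal{R}^\alpha_0 g = \han^{-1}[M]$ with $M(\sigma) = \int_1^2 (1-\sigma/t)_+^{\alpha-1}\chi(\sigma/t)\, \han g_t(\sigma)\, dt/t$ supported in $\sigma \in [1/2, 4]$; the $L^2_t$ averaging effectively smooths the $(1-\sigma/t)_+^{\alpha-1}$ singularity by half a derivative in $\sigma$, and passing this Sobolev regularity gain through Plancherel produces the extra $2^{-m/2}$ factor of spatial decay required.
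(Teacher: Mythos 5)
Your scaling reduction to $j=0$ and your treatment of $m\leq 0$ (bounded Bessel regime plus H\"older) are both fine and essentially match the paper's argument, yielding the rate $\delta=d$. The problem is the $m>0$ case, and you have in fact diagnosed it yourself: once you apply Cauchy--Schwarz in $t$ at the outset, you replace the genuinely vector-valued operator by the scalar kernel $\tilde K(r,\rho)=(\int_1^2|K_t(r,\rho)|^2\,dt/t)^{1/2}$, and this irreversibly discards the oscillatory cancellation in $t$ that produces the extra half power. Concretely, on $L_{m-2}\times I_m$ one has $\tilde K(r,\rho)\sim\min(1,r^{-(d-1)/2})\,2^{-m(d-1)/2}\,2^{-m\alpha}$, and for this deterministic kernel both the Schur ($L^1\to L^1$) bound and the Hilbert--Schmidt ($L^2\to L^2$) bound give the rate $2^{-m(\alpha-1)}$; interpolation cannot improve this. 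So the reduction to $\tilde K$ is not merely inconvenient, it is provably insufficient for $\alpha\in(1/2,1]$.

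The closing paragraph in which you observe that $\mathcal{R}^\alpha_0 g=\han^{-1}[M]$ and appeal to a ``half-derivative smoothing from $L^2_t$-averaging passed through Plancherel'' points in roughly the right direction but is not a proof, and it is incompatible with the $\tilde K$ reduction you set up in the previous paragraph: to use this structure you must \emph{not} take Cauchy--Schwarz in $t$ first. The paper's actual mechanism is precise and worth contrasting. One keeps the $L^2_t(H)$-valued kernel and uses the asymptotic $\kappa(u)=c\,e^{iu}(iu)^{-\alpha}+O(|u|^{-\alpha-1})$, so that the main part of $\mathcal{K}[f](u)=\int_1^2 t\kappa(tu)f(t)\,\frac{dt}{t}$ is $(iu)^{-\alpha}\,\ift[f^\alpha](u)$ with $f^\alpha(t)=t^{-\alpha}f(t)$. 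This is Proposition \ref{prop:Wf}: $W[f](x)$ has a main term $(1+|x|)^{-\alpha}\,|\ift[f^\alpha]|_H*\omega_N(x)$, where the amplitude $|\ift[f^\alpha]|_H*\omega_N$ is controlled in $L^2(dx)$ by $|f|_{L^2_t(H)}$ via Plancherel --- this $L^2$-in-$x$ gain is exactly what your $\tilde K$ destroys. Cauchy--Schwarz is then applied in the \emph{spatial} integral over $s\in I_m^*$ (length $\sim 2^m$), so that $\int_{I_m^*}(1+|s|)^{-\alpha}(\cdots)\,ds$ splits into $(\int_{I_m^*}(1+|s|)^{-2\alpha}ds)^{1/2}\sim 2^{-m(\alpha-1/2)}$ times the $L^2_s$ norm of the amplitude, the latter handled by Minkowski, Young, and Plancherel to return to $\norm{f\chi_{I_m}}_{\lpt}$. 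To repair your proof you would need to abandon the early $t$-Cauchy--Schwarz, isolate the oscillatory main term of $\kappa$, and run Cauchy--Schwarz in $s$ instead; as written, the $m>0$ case is not established.
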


We shall use vector notations $\ve{f} = \{ f_j \}_j$ and $\ve{S}^\alpha_{m} \ve{f} = \{ S^\alpha_{j,m} f_j \}_j$.
\begin{prop} \label{prop:S}
Let $\alpha>1/2$. For each $1 < p \leq 2$, 
\begin{align*}
\bbnorm{\ve{S}^\alpha_{m} \ve{f} }_{L^p( \mu_d,l^2(H))} \leq C \bbnorm{\ve{f} \chi_{I_m}}_{L^p(\mu_d,l^2(L_t(H)))}
\end{align*}
with a constant $C$ which does not depend on $m$.
\end{prop}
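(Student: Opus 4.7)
The plan is to reduce the problem to the case $m=0$ by the Hankel scaling, handle $p=2$ by Plancherel, and handle $1<p<2$ by vector-valued Calder\'on--Zygmund theory. The Hankel transform satisfies $\han[f(\lambda\cdot)](\rho)=\lambda^{-d}\han f(\rho/\lambda)$, which yields the commutation $R^\alpha_{2^j t}[f(2^m\cdot)](r)=(R^\alpha_{2^{j+m}t}\tilde f)(2^m r)$ for $\tilde f(r)=f(2^m r)$; combined with the fact that the $L^p(\mu_d)$ norm scales by the factor $2^{md/p}$, this reduces Proposition~\ref{prop:S} (after relabeling $k=j+m$) to proving
\begin{equation*}
\bbnorm{\bigl(\textstyle\sum_{k\in\Z}|T_k g_k|_H^2\bigr)^{1/2}}_{L^p(\mu_d)}\les \bbnorm{\bigl(\textstyle\sum_{k\in\Z}|g_k\chi_{I_0}|_{L^2_t(H)}^2\bigr)^{1/2}}_{L^p(\mu_d)},
\end{equation*}
where $T_k g:=\chi_{I_0^{**}}\mathcal{R}^\alpha_k(g\chi_{I_0})$. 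A crucial simplification is that all inputs and outputs are now supported on the fixed bounded interval $[1/4,8)$, on which $\mu_d$ is comparable to Lebesgue measure.

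For $p=2$, the Hankel multiplier symbol $\frac{\rho}{2^k t}(1-\frac{\rho}{2^k t})_+^{\alpha-1}\chi(\rho/(2^k t))$ is uniformly bounded in $k$ and $t$, so Plancherel gives $\norm{R^\alpha_{2^k t}h}_{L^2(\mu_d,H)}\les \norm{h}_{L^2(\mu_d,H)}$ uniformly. Combining this with Cauchy--Schwarz in $t\in[1,2]$ applied to $\mathcal{R}^\alpha_k g=\int_1^2 R^\alpha_{2^k t}g_t\,dt/t$ yields $\norm{T_k g_k}_{L^2(\mu_d,H)}^2\les\norm{g_k\chi_{I_0}}_{L^2(\mu_d,L^2_t(H))}^2$; summing squared norms in $k$ proves the $p=2$ case.

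For $1<p<2$, view the operator $\mathbf{T}:\{g_k\}_k\mapsto\{T_k g_k\}_k$ as a vector-valued Calder\'on--Zygmund operator from $L^p(\mu_d;l^2(L^2_t(H)))$ to $L^p(\mu_d;l^2(H))$, with block-diagonal kernel $\mathbf{K}(r,s)=(K_k(r,s))_{k\in\Z}$; each $K_k(r,s):L^2_t(H)\to H$ has operator norm bounded by the $L^2_t(dt/t)$-norm of the scalar Hankel kernel $K_k^{(t)}(r,s)$ of $R^\alpha_{2^k t}$. Using the Bessel asymptotic $B_d(x)=x^{-(d-1)/2}(a_+ e^{ix}+a_- e^{-ix})+O(x^{-(d+1)/2})$ for large $x$, integration by parts in $\rho$ in the Hankel representation of $K_k^{(t)}$, and the Sobolev regularity $\alpha>1/2$ of the radial symbol, one establishes that $K_k^{(t)}(r,s)$ decays away from $|r-s|\sim 2^{-k}$; averaging in $t$ and summing in $l^2$ over $k$ then verify the vector-valued H\"ormander condition for $\mathbf{K}$ on the compact interval $[1/4,8)$. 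Together with the $L^2$ bound from the previous step, this yields weak type $(1,1)$ and, by interpolation, the desired $L^p$ bound for $1<p\leq 2$. The principal technical obstacle is verifying the H\"ormander estimate uniformly over \emph{all} $k\in\Z$: the high-frequency regime $k\gg 0$ relies on the oscillation of $B_d$ to extract decay at scale $2^{-k}$, while in the low-frequency regime $k\ll 0$ the kernel is smooth but one must control the $l^2$-sum in $k$ so that no logarithmic blow-up occurs at the compact scale $1$.
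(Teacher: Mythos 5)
Your overall architecture --- rescale to $m=0$, Plancherel for $p=2$, then a vector-valued Calder\'on--Zygmund / weak $(1,1)$ argument exploiting the block-diagonal structure in $j$ --- matches the paper's proof (the paper does not rescale, but uses $r\sim s\sim 2^m$ to normalize $\mu_d$, which amounts to the same thing). The gap is in the one step that carries all the difficulty. Your plan is to bound the operator norm $|K_k(r,s)|_{\mathcal{L}(L^2_t(H),H)}$ pointwise by the $L^2_t$-norm of the scalar kernels and then verify a H\"ormander condition for that pointwise bound; this cannot work for $1/2<\alpha\leq 1$. The symbol $(1-\rho)_+^{\alpha-1}\chi(\rho)$ has inverse Fourier transform $\kappa(u)=c\,e^{iu}(iu)^{-\alpha}+O(|u|^{-\alpha-1})$, so after localizing to $r,s\in I_0^{**}$ the sharp pointwise operator-norm bound on the kernel of $T_k$ is of size $2^{k}(1+2^{k}|r-s|)^{-\alpha}$. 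Integrating this over $\{2|s-\bar s|<|r-s|<8\}$ gives $\approx 2^{k(1-\alpha)}$ when $\alpha<1$ (and $\log(1/|s-\bar s|)$ when $\alpha=1$), so the operator-norm H\"ormander integral is not bounded uniformly in $k$, let alone summable over $k$. Integration by parts cannot improve this: the decay rate $|u|^{-\alpha}$ is forced by the singularity of $(1-\rho)_+^{\alpha-1}$ at $\rho=1$.

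The fix, which is the content of Proposition \ref{prop:Wf} and of the claim \eqref{eqn:claimS}, is to refuse to apply Cauchy--Schwarz in $t$ pointwise in $(r,s)$. One keeps $|K(r,s)[f]|_H\les\sum_{\pm,\pm}W[f](\pm r\pm s)\cdot[(1+r)(1+s)]^{-(d-1)/2}$ with
\begin{equation*}
W[f](x)\les (1+|x|)^{-\alpha}\,|\ift[f^\alpha]|_H*\omega_N(x)+(1+|x|)^{-\alpha-1}|f|_{L^2_t(H)}.
\end{equation*}
The error term has integrable decay; the main term's numerator is a function of $x$ whose $L^2(\R)$ norm is $\les|f|_{L^2_t(H)}$ by Plancherel in $t$. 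Cauchy--Schwarz is then applied in the $x$-integration over $\{|x|\geq 2^k|J_\nu|/2\}$ against the weight $(1+|x|)^{-(\alpha-\epsilon)}$, and it is precisely $2(\alpha-\epsilon)>1$, i.e.\ $\alpha>1/2$, that yields the decay factor $(2^k|J_\nu|)^{-\epsilon}$ with $\epsilon=(\alpha-1/2)/2$. Your sketch invokes ``the Sobolev regularity $\alpha>1/2$'' but attaches it to a pointwise kernel decay estimate, where it does nothing; it must enter through this $L^2$-in-$x$ argument performed after the spatial integration. The complementary bound $2^k|J_\nu|$ for small $2^k|J_\nu|$ comes from the cancellation of the bad parts against $K(2^kr,2^ks)-K(2^kr,2^ks_\nu)$, which your H\"ormander framing does encode, and the remaining elements of your outline (the $p=2$ case, the scaling reduction, the block-diagonal structure, summing $\min(2^k|J_\nu|,(2^k|J_\nu|)^{-\epsilon})$ over $k$) are sound.
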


\begin{proof}[Proof of (\ref{eqn:square}) given Proposition \ref{prop:HE}, \ref{prop:E} and \ref{prop:S}]
The proof to be given is just a minor modification of the proof given in \cite{GaSe}, but we include it for the convenience of the reader. We show (\ref{eqn:square}) for $\sum_m  V^\alpha_{j,m-j}$ first. 
\begin{align*}
&\bnorm{\biggl( \sum_j \babs{\sum_m V^\alpha_{j,m-j} f_j }^2_{H} \biggr)^{1/2} }_{L^{p,2}(\mu_d)}
\leq \sum_m \pare{ \sum_j \norm{V^\alpha_{j,m-j} f_j}_{\lpl}^p}{p} \\
&\les \sum_m 2^{-|m|\delta(p)}  \pare{ \int_0^\infty \sum_j \chi_{I_{m-j}}(r) |f_j(r)|^p_{L^2_t(H)}d\mu_d(r)}{p},
\end{align*}
where we have used Minkowski's inequality to pull out the sum in $m$ and that $l^p(\Z) \subset l^2(\Z)$. To be more precise, \cite[Lemma 2.1]{GaSe} was used in order to deal with the Lorentz space $L^{p,2}$. Next, the trivial bound 
$$ |f_j(r)|^p_{L^2_t(H)} \leq \biggl(\sum_j |f_j(r)|^2_{L^2_t(H)} \biggr)^{p/2}, $$
will finish the proof making use of the disjointness of $\chi_{I_{m-j}}$ and the
summability of $2^{-|m|\delta(p)}$. The proof is similar for $\sum_m 
E^\alpha_{j,m-j}$ except that we may show the stronger $L^p(\mu_d)$ bounds.

For the case $\sum_m S^\alpha_{j,m}$, we shall use the fact that ${I_m^{**}}$ overlap only finitely many times to get
\begin{align*}
\pare{\sum_j \babs{ \sum_m \chi_{{I_m^{**}}} S^\alpha_{j,m} f_j }^2 }{2} 
&\les  \pare{\sum_m \chi_{{I_m^{**}}}  \sum_j \abs{ S^\alpha_{j,m} f_j }^2 }{2} \\
&\les  \sum_m \chi_{{I_m^{**}}}  \pare{ \sum_j \abs{ S^\alpha_{j,m} f_j }^2 }{2}.
\end{align*}

Thus, by Proposition \ref{prop:S},
\begin{align*}
&\bnorm{\biggl( \sum_j \babs{\sum_m S^\alpha_{j,m} f_j }^2_{H} \biggr)^{1/2} }_{L^{p}(\mu_d)} 
\les \pare{ \sum_m \bnorm{\pare{ \sum_j \abs{ S^\alpha_{j,m} f_j }_H^2 }{2}}_{L^p(\mu_d)}^p }{p} \\
&\les \pare{ \sum_m \bnorm{ \chi_{I_m}  \pare{ \sum_j \abs{f_j }_{L^2_t(H)}^2 }{2}}_{L^p(\mu_d)}^p }{p} 
=  \bnorm{\pare{ \sum_j \abs{f_j }_{L^2_t(H)}^2 }{2}}_{L^p(\mu_d)}.
\end{align*}
\end{proof}

\section{Kernel estimate}
\subsection{Estimate I}
The goal of this section is to obtain an estimate for the kernel of the operator $\mathcal{R}^\alpha_j $. Note that $\mathcal{R}^\alpha_j f(r)$ can be written as
\begin{align*}
&\int_0^\infty 2^{jd}\int_0^\infty \int_1^2  \biggl(1-\frac{\rho}{t}\biggr)_+^{\alpha-1} \chi\biggl(\frac{\rho}{t}\biggr) f_t(s) \frac{dt}{t} B_d(2^j r\rho) B_d(2^js\rho) d\mu_d(\rho)  d\mu_d(s) \\
&:= \int_0^\infty 2^{jd}K(2^jr,2^js)[f(s)] d\mu_d(s),
\end{align*}
where $K(r,s)$ is a bounded linear operator from $L^2_t(H):= L^2([1,2], dt/t, H)$ to $H$.

For $f\in L^2_t(H)$, define the operator $\mathcal{K}$ by
\begin{equation*}
\mathcal{K}[f](u) = \int_1^2 t\kappa(tu) f(t) \frac{dt}{t},
\end{equation*}
where $\fto \kappa (\rho) = (1-\rho)^{\alpha-1}_+ \chi (\rho)$. Then $K(r,s)[f]$ can be written as 
\begin{align*}
 K(r,s)[f] &= \int_0^\infty \int_1^2 \biggl(1-\frac{\rho}{t}\biggr)_+^{\alpha-1} \chi\biggl(\frac{\rho}{t}\biggr) f(t) \frac{dt}{t} B_d(r\rho) B_d(s\rho) d\mu_d(\rho)\\
 &= \int_0^\infty \fto[\mathcal{K}[f]](\rho) B_d(r\rho) B_d(s\rho) d\mu_d(\rho).
\end{align*}

We shall borrow the kernel estimate from \cite{GaSe} for the characterization of Hankel multipliers to obtain a rather precise estimate for the kernel $K(r,s)$. Here and in what follows, we set $\omega_N(u) = (1+|u|)^{-N}$. 
\begin{prop}\label{prop:kernel0}
Let $f \in L^2_t(H)$. Then for $N\in \N$,
\begin{equation} \label{eqn:kernel}
\abs{K(r,s)[f]}_H \les_N \sum_{\pm,\pm}\frac{W[f](\pm r \pm s)}{[ (1+r)(1+s)]^{(d-1)/2}},
\end{equation}
where $W[f](x) = |\mathcal{K}[f]|_H* \omega_N(x).$
\end{prop}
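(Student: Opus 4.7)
The plan is to exploit the oscillatory asymptotic expansion of the Bessel function together with the compact Fourier support of $\mathcal K[f]$. By the standard stationary-phase expansion of $J_{(d-2)/2}$, one may write
\begin{equation*}
B_d(x)=\sum_{\pm}e^{\pm ix}a_\pm(x),
\end{equation*}
where (after a smooth modification near the origin, which contributes only Schwartz-class remainders satisfying even better bounds) $a_\pm$ are smooth on $[0,\infty)$ with symbol estimates $|a_\pm^{(k)}(x)|\les_k (1+x)^{-(d-1)/2-k}$. Next, observe that $\fto[\mathcal K[f]]$ is supported in a fixed compact subset of $(0,\infty)$: since $\fto\kappa(\rho)=(1-\rho)^{\alpha-1}_+\chi(\rho)$ is supported in $[1/2,1]$, the representation
\begin{equation*}
\fto[\mathcal K[f]](\rho)=\int_1^2 t^{-1}\fto\kappa(\rho/t)f(t)\,dt
\end{equation*}
forces $\supp\fto[\mathcal K[f]]\subset[c,C]\subset(0,\infty)$. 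Fix a smooth cutoff $\tilde\chi$ supported in a slightly larger compact subset of $(0,\infty)$ which is identically $1$ on $[c,C]$.

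Substituting the Bessel expansion into both $B_d(r\rho)$ and $B_d(s\rho)$ splits $K(r,s)[f]$ into four oscillatory pieces
\begin{equation*}
K_{\epsilon,\delta}(r,s)[f]=\int_0^\infty e^{i(\epsilon r+\delta s)\rho}\,\fto[\mathcal K[f]](\rho)\,a_\epsilon(r\rho)\,a_\delta(s\rho)\,\tilde\chi(\rho)\,\rho^{d-1}\,d\rho,
\end{equation*}
indexed by $\epsilon,\delta\in\{\pm1\}$. By Fourier inversion, each such integral equals the convolution $\mathcal K[f]*\Psi_{\epsilon,\delta}^{r,s}$ evaluated at $\epsilon r+\delta s$, where $\Psi_{\epsilon,\delta}^{r,s}$ is (up to a constant) the Fourier transform of the amplitude $a_\epsilon(r\rho)a_\delta(s\rho)\tilde\chi(\rho)\rho^{d-1}$.

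The key estimate is that on $\supp\tilde\chi$ we have $\rho\asymp 1$, so the symbol bounds yield
\begin{equation*}
\abs*{\partial_\rho^k[a_\epsilon(r\rho)]}\les_k r^k(1+r\rho)^{-(d-1)/2-k}\les_k (1+r)^{-(d-1)/2},
\end{equation*}
and similarly in $s$. Therefore $a_\epsilon(r\rho)a_\delta(s\rho)\tilde\chi(\rho)\rho^{d-1}$ is a smooth compactly supported function whose $C^N$ norm in $\rho$ is dominated by $[(1+r)(1+s)]^{-(d-1)/2}$; integration by parts in the oscillatory integral then gives the pointwise bound $|\Psi_{\epsilon,\delta}^{r,s}(y)|\les_N[(1+r)(1+s)]^{-(d-1)/2}\omega_N(y)$.

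Taking $H$-norms and applying Minkowski's inequality produces
\begin{equation*}
|K_{\epsilon,\delta}(r,s)[f]|_H\les_N [(1+r)(1+s)]^{-(d-1)/2}(|\mathcal K[f]|_H*\omega_N)(\epsilon r+\delta s),
\end{equation*}
and summation over the four sign combinations yields \eqref{eqn:kernel}. The main obstacle is the bookkeeping needed to extract the uniform $[(1+r)(1+s)]^{-(d-1)/2}$ gain from the Bessel factors: this hinges on the fact that the compact support of $\fto[\mathcal K[f]]$ (inherited from the $\chi$-cutoff built into $\fto\kappa$) confines $\rho$ to a region where $1+r\rho\asymp 1+r$, so that powers of $r$ arising from differentiation are absorbed by powers of $(1+r\rho)^{-1}$ rather than destroying the decay.
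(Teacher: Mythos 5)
Your argument is correct, and it reconstructs the proof behind the kernel estimate that the paper simply quotes as Proposition~3.1 of Garrig\'{o}s--Seeger \cite{GaSe}: decompose $B_d(x)=\sum_{\pm}e^{\pm ix}a_{\pm}(x)$ into oscillations with symbol amplitudes, use the fact that $\fto[\mathcal K[f]]$ is confined to a fixed compact subset of $(0,\infty)$ (inherited from the cutoff $\chi$) so that $1+r\rho\asymp 1+r$ and $1+s\rho\asymp 1+s$ on the domain of integration, and then integrate by parts in $\rho$ to produce the weight $\omega_N$. That is precisely the mechanism \cite{GaSe} uses, though they phrase the Bessel asymptotics as a truncated series plus remainder \eqref{eqn:asymptotic} rather than in symbol form; the symbol form is a harmless repackaging. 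Two small bookkeeping remarks: (i) with the paper's Fourier conventions the oscillatory piece $K_{\epsilon,\delta}(r,s)[f]$ equals a constant times $[\mathcal K[f]*\Psi^{r,s}_{\epsilon,\delta}](-\epsilon r-\delta s)$ rather than at $\epsilon r+\delta s$, which is immaterial once you sum over all four sign patterns $\pm r\pm s$; (ii) the ``smooth modification near the origin'' should be made explicit (e.g.\ absorb the compactly supported piece $B_d\eta$ into $a_{\pm}$ by writing $B_d\eta=\tfrac12 e^{ix}(B_d\eta e^{-ix})+\tfrac12 e^{-ix}(B_d\eta e^{ix})$), but this is routine and does not affect the bound.
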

\begin{proof}
This follows from the kernel estimate in \cite[Proposition 3.1]{GaSe}; If $m= \fto k$ is integrable and compactly supported in $(0,\infty)$, then
\begin{equation}\label{eqn:kerGS}
\abs{\int_0^\infty \fto[k](\rho) B_d(\rho r)B_d(\rho s) d\mu_d(\rho)} \leq 
C_N\sum_{\pm,\pm}\frac{|k|*\omega_N(\pm r \pm s)}{[ (1+r)(1+s)]^{(d-1)/2}},
\end{equation}
where $C_N$ does not depend on $k,r,s$. 
\end{proof}

In the proof of \eqref{eqn:kerGS}, the following asymptotic formula is used.
\begin{equation} \label{eqn:asymptotic}
B_d(x) = \sum_{\pm} \sum_{\nu=0}^M c^\pm_{\nu,d} e^{\pm ix} x^{-\nu-\frac{d-1}{2}} + x^{-M} E_{M,d}(x)
\end{equation}
for $x\geq 1$ where $E_{M,d}$ have bounded derivatives. The terms $\pm r \pm s$ show up naturally by the translation arising from the oscillation term $e^{\pm ix}$. See \cite{GaSe} for details.

\subsection{Estimate II}
In what follows, we shall write $f^\alpha(t) = t^{-\alpha} f(t)$. The main result of this section is the following.
\begin{prop}\label{prop:Wf}
Let $f \in L^2_t(H)$. Then for all sufficiently large $N$,
\begin{equation}\label{eqn:Wf} 
W[f](x) \les \frac{1}{(1+|x|)^\alpha} |\ift[f^\alpha]|_H* \omega_N (x) + \frac{1}{(1+|x|)^{\alpha+1}} |f|_{L^2_t(H)}.
\end{equation}
In addition, there is a uniform estimate
\begin{equation}\label{eqn:Wfu}
W[f](x) \les |f|_{L^2_t(H)}.
\end{equation}
\end{prop}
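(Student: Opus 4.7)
The plan is to prove a pointwise bound on $|\mathcal{K}[f](u)|_H$ having the same shape as \eqref{eqn:Wf} (with $x$ replaced by $u$), and then pass to $W[f]$ by a convolution-with-$\omega_N$ argument. The structural input driving everything is a symbol-type oscillatory expansion of $\kappa$ at infinity.

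\emph{Step 1: Asymptotic decomposition of $\kappa$.} Since $\fto\kappa(\rho) = (1-\rho)_+^{\alpha-1}\chi(\rho)$ is compactly supported and smooth away from its sole singularity at $\rho = 1$, I write $\fto\kappa = \phi_{\mathrm{sing}} + \phi_{\mathrm{smooth}}$, with $\phi_{\mathrm{sing}}$ localized near $\rho = 1$ and $\phi_{\mathrm{smooth}} \in C_c^\infty(\R)$. Then $\kappa_\sharp := \ift\phi_{\mathrm{smooth}} \in \s(\R)$. For $\kappa_1 := \ift\phi_{\mathrm{sing}}$, the substitution $s = 1 - \rho$ yields
\begin{equation*}
\kappa_1(v) = \frac{e^{-iv}}{2\pi}\int_0^\infty s^{\alpha - 1} G(s) e^{ivs}\,ds
\end{equation*}
for some $G \in C_c^\infty([0,\infty))$ with $G(0) = \chi(1)$. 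Standard asymptotic analysis (rescaling $s \mapsto r/v$, Taylor-expanding $G$ at $0$, and differentiating under the integral to iterate) produces a representation $\kappa_1(v) = e^{-iv}\Psi(v)$ where $\Psi \in C^\infty(\R)$ obeys the symbol-of-order-$(-\alpha)$ bounds $|\Psi^{(k)}(v)| \lesssim_k (1+|v|)^{-\alpha - k}$ for every $k \geq 0$.

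\emph{Step 2: Pointwise bound on $\mathcal{K}[f]$.} Splitting $\mathcal{K}[f] = I_1 + I_\sharp$ accordingly, with $I_\sharp(u) = \int_1^2 \kappa_\sharp(tu)f(t)\,dt$, the Schwartz decay of $\kappa_\sharp$ and Cauchy--Schwarz give $|I_\sharp(u)|_H \lesssim_N (1+|u|)^{-N}|f|_{L^2_t(H)}$ for every $N$. For the main term $I_1(u) = \int_1^2 e^{-itu}\Psi(tu)f(t)\,dt$ on the range $|u| \geq 1$, introduce
\begin{equation*}
q(t,u) := |u|^\alpha t^\alpha\Psi(tu)\psi(t), \qquad \psi \in C_c^\infty(\R), \ \psi \equiv 1 \text{ on } [1,2].
\end{equation*}
The symbol bounds on $\Psi$ imply that $\{q(\cdot,u) : |u| \geq 1\}$ is bounded in $C_c^\infty(\R)$, so $\fto[q(\cdot,u)](\xi)$ is rapidly decreasing in $\xi$ uniformly in $u$. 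Writing $f(t) = t^\alpha f^\alpha(t)$ on $[1,2]$ and applying Fourier inversion in $t$,
\begin{equation*}
I_1(u) = |u|^{-\alpha}\int_\R \fto[q(\cdot,u)](\xi)\,\ift[f^\alpha](u + \xi)\,d\xi,
\end{equation*}
which yields $|I_1(u)|_H \lesssim |u|^{-\alpha}(|\ift[f^\alpha]|_H * \omega_N)(u)$. Combining with the trivial bound $|\mathcal{K}[f](u)|_H \lesssim |f|_{L^2_t(H)}$ for $|u| \leq 1$ produces
\begin{equation*}
|\mathcal{K}[f](u)|_H \lesssim (1+|u|)^{-\alpha}(|\ift[f^\alpha]|_H * \omega_N)(u) + (1+|u|)^{-\alpha - 1}|f|_{L^2_t(H)}.
\end{equation*}

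\emph{Step 3: Passing to $W[f]$.} I convolve the pointwise bound with $\omega_N$. For the first term, Peetre's inequality $(1+|u|)^{-\alpha} \leq C(1+|x|)^{-\alpha}(1+|x-u|)^\alpha$ extracts $(1+|x|)^{-\alpha}$ from the outer convolution, after which the two nested $\omega_N$-convolutions collapse via $\omega_N * \omega_{N - \alpha} \lesssim \omega_{N - \alpha}$ (valid for $N - \alpha > 1$); relabeling $N$ recovers the first term of \eqref{eqn:Wf}. The second term is handled by the standard estimate $(1+|\cdot|)^{-\alpha - 1} * \omega_N \lesssim (1+|\cdot|)^{-\alpha - 1}$ for $N > 1$. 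The uniform bound \eqref{eqn:Wfu} is immediate from $\|\mathcal{K}[f]\|_{L^\infty(\R,H)} \leq \|\fto\kappa\|_{L^1}|f|_{L^1_t(H)} \lesssim |f|_{L^2_t(H)}$ together with $\|\omega_N\|_{L^1} < \infty$. The principal obstacle lies in Step 1: establishing the \emph{full} symbol bounds on $\Psi$ rather than merely $|\Psi(v)| \lesssim (1+|v|)^{-\alpha}$, since the uniform $C_c^\infty$ control of $q(\cdot, u)$ demanded by Step 2 hinges on all of these derivative estimates.
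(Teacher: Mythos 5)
Your proof is correct, but the route differs meaningfully from the paper's. The paper reduces matters to the pointwise bound \eqref{eqn:Vf} on $\mathcal{K}[f]$ and then applies the single-term asymptotic \eqref{eqn:kappa}: $\kappa(u) = \frac{\Gamma(\alpha)}{2\pi}e^{iu}(iu)^{-\alpha} + O(|u|^{-(\alpha+1)})$. Because the leading term already factors as $(iu)^{-\alpha}\cdot t^{-\alpha}\cdot e^{itu}$, substituting into $\mathcal{K}[f](u)=\int_1^2\kappa(tu)f(t)\,dt$ gives $|u|^{-\alpha}\,|\ift[f^\alpha](u)|_H$ directly, and the error term is absorbed into $|u|^{-(\alpha+1)}|f|_{L^2_t(H)}$ via Cauchy--Schwarz. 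The passage to the smoothed $|\ift[f^\alpha]|_H*\omega_N$ form is left implicit there (it follows either because $\ift[f^\alpha]$ has Fourier transform supported in $[1,2]$, so $|\ift[f^\alpha](u)|_H \les (|\ift[f^\alpha]|_H*\omega_N)(u)$, or simply because one can pass straight to $W[f]=|\mathcal{K}[f]|_H*\omega_N$ and Peetre-ify there, which is all \eqref{eqn:Wf} requires).

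You instead prove full symbol-of-order-$(-\alpha)$ bounds for $\Psi$ and run a Fourier-inversion/amplitude-localization argument: the uniform $C_c^\infty$ control on $q(\cdot,u)=|u|^\alpha t^\alpha\Psi(tu)\psi(t)$ gives $|\fto[q(\cdot,u)](\xi)|\les\omega_N(\xi)$ uniformly in $|u|\geq 1$, which yields the smoothed $(|\ift[f^\alpha]|_H*\omega_N)(u)$ bound at the $\mathcal{K}$-level directly, with no appeal to band-limitedness. This is more systematic and robust (it would survive replacing $\kappa$ by any kernel with the same symbol-type structure), at the cost of verifying all derivative bounds. Note that the ``principal obstacle'' you flag is actually routine: $\Psi^{(k)}(v)$ is (up to a constant) the same Watson-lemma integral $\int_0^\infty s^{\alpha+k-1}G(s)e^{ivs}\,ds$ with $\alpha$ shifted to $\alpha+k$, so $|\Psi^{(k)}(v)|\les(1+|v|)^{-\alpha-k}$ follows at once. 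Steps 2 and 3, including the Peetre/convolution bookkeeping and the uniform bound \eqref{eqn:Wfu}, are all correct.
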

\begin{proof} 
Since $W[f]$ is a convolution of $|\mathcal{K}[f]|_H$ and $\omega_N$, where $N$ may be chosen as large as we want, it suffices to prove 
\begin{align} \label{eqn:Vf} 
|\mathcal{K}[f](u)|_H &\les \frac{1}{(1+|u|)^\alpha} |\ift[f^\alpha]|_H* \omega_N (u) + \frac{1}{(1+|u|)^{\alpha+1}} |f|_{L^2_t(H)}, \\
\label{eqn:Vfu}
|\mathcal{K}[f](u)|_H &\les |f|_{L^2_t(H)}.
\end{align}
For instance, the inequality $(1+|x-u|)^{-1} \leq (1+|u|)(1+|x|)^{-1}$ may be used to obtain \eqref{eqn:Wf} from \eqref{eqn:Vf}. 

(\ref{eqn:Vfu}) follows from Cauchy-Schwarz inequality since $\kappa \in L^\infty(\R)$. 

For \eqref{eqn:Vf}, we need a standard asymptotic formula for $\kappa$. 
\begin{equation} \label{eqn:kappa}
\kappa(u) = \frac{\Gamma(\alpha)}{2\pi} e^{iu} (iu)^{-\alpha} + O(|u|^{-(\alpha+1)}) \text{ as }  u\to \pm \infty.
\end{equation}
We shall sketch a proof of (\ref{eqn:kappa}) at the end of this section. 

Let us assume $|u|\geq  C$ for some large constant $C$. By (\ref{eqn:kappa}),
\begin{align*}
|\mathcal{K}[f](u)|_H &\les \abs{\int_1^2 e^{itu}(itu)^{-\alpha} f(t) dt}_H + \abs{\int_1^2 O(|tu|^{-(\alpha+1)}) f(t) dt}_H \\
&\les |u|^{-\alpha} |\ift[f^\alpha] (u)|_H +  |u|^{-(\alpha+1)} |f|_{L^2_t(H)}.
\end{align*}
This estimate combined with (\ref{eqn:Wfu}) implies (\ref{eqn:Vf}).
\end{proof}

\begin{proof}[Proof of (\ref{eqn:kappa}), sketch] 
Consider the case $u \gg 1$.
By Fourier inversion formula and a change of variable, we may write
$$\kappa (u) = \frac{e^{iu}}{2\pi} \int_0^\infty \rho^{\alpha-1} \tilde \chi(\rho) e^{-iu\rho} d\rho, $$
where $\tilde{\chi}$ is a compactly supported smooth function which is $1$ near $0$. Introducing the factor $e^{-\rho}$, we split the integral as
\begin{align*}
I+II = \int_0^\infty \rho^{\alpha-1} e^{-(1+iu)\rho} d\rho + \int_0^\infty \rho^{\alpha-1}  (\tilde\chi(\rho)e^\rho-1) e^{-\rho} e^{-iu\rho} d\rho.
\end{align*}

For the main term, an application of Cauchy's theorem justifies the ``change of variable" $\rho \to (1+iu)^{-1}\rho$ to get 
$$ I= \frac{\Gamma(\alpha)}{(1+iu)^\alpha}. $$
For the error term, we have $|II| \les_N (1+|u|)^{-N}$ by integration by parts.

Finally, Taylor expansion gives the asymptotic formula. The case $u \to -\infty $ follows from the case $u \to \infty$ since $\kappa(u) = \overline{\kappa(-u)}$.
\end{proof}

\section{Proof of the main propositions} \label{sec:proof}
Throughout the section, we shall omit the summation notation $\sum_{\pm,\pm}$ in the kernel estimate \eqref{eqn:kernel}.
\subsection{Proof of Proposition \ref{prop:HE}}
By a scaling argument, we may assume that $j=0$. Indeed, we have 
$ V^\alpha_{j,m-j} f (r) = V^\alpha_{0,m} [f(2^{-j} \cdot)](2^jr).$

By \eqref{eqn:kernel},  
$$|V^\alpha_{0,m}f(r)|_H \les \int_{I_m} \frac{\chi_{R_{m+3}}(r)W[f(s)](\pm r \pm s)  }{(1+r)^{(d-1)/2}} \frac{d\mu_d(s)}{(1+s)^{(d-1)/2}}.$$
Then by Minkowski's inequality, $\norm{V^\alpha_{0,m}f}_{L^{p,2}(\mu_d,H)}$ is bounded by
\begin{equation}\label{eqn:eqnH1}
C \int_{I_m} \norm{\frac{\chi_{R_{m+3}}W[f(s)](\pm \cdot \pm s)  }{(1+\cdot)^{(d-1)/2}}}_{L^{p,2}(\mu_d)} \frac{d\mu_d(s)}{(1+s)^{(d-1)/2}}.
\end{equation}

Then the norm inside of the integral is bounded by
\begin{align}\label{eqn:inter}
\norm{\frac{\chi_{R_{m+2}}W[f(s)](\pm \cdot)  }{(1+\cdot)^{(d-1)/2}}}_{L^{p,2}(\mu_d)} \les
\norm{\frac{W[f(s)](\cdot)}{(1+|\cdot|)^{(d-1)/2}}}_{L^{p,2}(\nu_d)}
\end{align}
by a change of variable $r \to r \pm s$ and $(r\pm s) \sim r$, where $\nu_d(x) = (1+|x|)^{d-1}$ is a measure on $\R$.

We claim that \eqref{eqn:inter} is bounded by $C|f(s)|_{L^2_t(H)}$, which would imply
\begin{align*}
\norm{V^\alpha_{0,m}f}_{L^{p,2}(\mu_d,H)} &\les\int_{I_m} |f(s)|_{L^2_t(H)} \frac{d\mu_d(s)}{(1+s)^{(d-1)/2}} \\
&\les \norm{f \chi_{I_m}}_{L^{p,\infty}(\mu_d,L^2_t(H))} \norm{ \chi_{I_m} (1+\cdot)^{-(d-1)/2}}_{L^{p',1}(\mu_d)}
\end{align*}
by a variant of H\"older's inequality in Lorentz spaces (see \cite{Oneil}). The proof is complete if we observe that
$$\norm{ \chi_{I_m} (1+\cdot)^{-(d-1)/2}}_{L^{p',1}(\mu_d)} 
\leq \min( 2^{md/p'}, 2^{-m(d(1/p-1/2) -1/2)} ).$$
Here we have used the assumption that $1<p<2d/(d+1)$.

We turn to the proof of the claim. We separately estimate the main term and the error term given by Proposition \ref{prop:Wf}. For the error term, we control the $L^{p,2}$ norm by $L^p$ norm to obtain
\begin{align*}
\pare{ \int_{\R} (1+|x|)^{-p[\alpha+1 - (d-1)(1/p-1/2)]} dx}{p} |f(s)|_{L^2_t(H)} \les |f(s)|_{L^2_t(H)}
\end{align*}

For the main term, we apply H\"older's inequality and Plancherel's identity.
\begin{align*}
&\norm{\frac{|\ift[f^\alpha(s)]|_H*\omega_N (1+|\cdot|)^{-\frac{d-1}{2}} }{(1+|\cdot|)^{\alpha}}}_{L^{p,2}(\nu_d)} \\
&\les\norm{|\ift[f^\alpha(s)]|_H*\omega_N}_{L^2(\R)} \norm{ (1+|\cdot|)^{-\alpha}}_{L^{(\frac{1}{p}-\frac{1}{2})^{-1},\infty}(\nu_d)}\\
&\les \norm{\omega_N}_{L^1(\R)}\pare {\int_1^2 |f_t(s)|_H^2  t^{-2\alpha} dt}{2} \les|f(s)|_{L^2_t(H)}.
\end{align*}
For the second inequality, that $\alpha = d(\frac{1}{p}-\frac{1}{2})$ was required.

\subsection{Proof of Proposition \ref{prop:E}}
By scaling, we may assume that $j=0$.
\subsubsection{The case $m\leq 0$}
By \eqref{eqn:kernel} and the change of variable $s \to s \pm r$, 
\begin{align*}
|E^\alpha_{0,m}f(r)|_H 
& \les 2^{m(d-1)} \chi_{L_{m-2}}(r) \int_{I_m} W[f\chi_{I_m}(s)](\pm r \pm s)  ds \\
& \les 2^{m(d-1)} \chi_{L_{m-2}}(r) \int_{I_m^*} W[f\chi_{I_m}(s\pm r )](\pm s)  ds.
\end{align*}
By Minkowski's inequality,
$\norm{E^\alpha_{0,m}f}_{L^p(\mu_d,H)}$ is bounded by
\begin{align*}
&C2^{m(d-1)} 2^{m(d-1)/p} \int_{I_m^*}  \pare{ \int_{ L_{m-2}} W[f\chi_{I_m}(s\pm r)](\pm s)^p dr}{p}  ds \\
&\les 2^{m(d-1)} \int_{I_m^*}  \pare{ \int_{I_m^{**}} W[f\chi_{I_m}(r)](\pm s)^p r^{d-1} dr}{p}  ds,
\end{align*}
since $I_m^* \pm L_{m-2} \subset I_m^{**}.$
Applying the uniform estimate (\ref{eqn:Wfu}), we get
\begin{equation*}
\norm{E^\alpha_{0,m}f}_{L^p(\mu_d,H)} \les 2^{md} \norm{f\chi_{I_m}}_\lpt.
\end{equation*}

\subsubsection{The case $m\geq 0$}
By \eqref{eqn:kernel} and the change of variable $s \to s \pm r$, 
\begin{equation*}
|E^\alpha_{0,m}f(r)|_H  \les 2^{m(d-1)/2} \chi_{L_{m-2}(r)} (1+r)^{-(d-1)/2} \int_{I_m} W[f\chi_{I_m}(s\pm r)]( \pm s)  ds.
\end{equation*}
Then we take $L^p(\mu_d)$ norm, and then apply Minkowski's inequality. Using $r \leq 2^m$, $\norm{E^\alpha_{0,m}f}_{L^p(\mu_d,H)}$ is bounded by
\begin{equation} \label{eqn:norm2}
\begin{split}
&C2^{m(d-1)/p} \int_{I_m^*}  \pare{ \int_{ L_{m-2}} W[f\chi_{I_m}(s\pm r)](\pm s)^p dr}{p}  ds \\
&\les \int_{I_m^*}  \pare{ \int_{I_m^{**}} W[f\chi_{I_m}(r)](\pm s)^p r^{d-1} dr}{p}  ds.
\end{split}
\end{equation}

We estimate the main term and the error term separately, using (\ref{eqn:Wf}).  (\ref{eqn:norm2}) for the error term is bounded by a constant times
\begin{equation*}
\int_{I_m^*}  \pare{ \int_{I_m^{**}} |f\chi_{I_m}(r)|_{L^2_t(H)}^p r^{d-1} dr}{p}  \frac{ds}{(1+|s|)^{\alpha+1}} \leq 2^{-m\alpha}\norm{f\chi_{I_m}}_\lpt
\end{equation*}
which has the desired decay term $2^{-m\alpha}$.

The estimate for the main term requires the assumption $\alpha>1/2$. (\ref{eqn:norm2}) for the main term is bounded by a constant times
\begin{equation}\label{eqn:norm3}
\begin{split}
&\int_{I_m^*}  \pare{ \int_{I_m^{**}} [|\ift[[f\chi_{I_m}(r)]^\alpha]|_H* \omega_N (\pm s)]^p r^{d-1} dr}{p}  \frac{ds}{(1+|s|)^{\alpha}} \\
&\leq 2^{-m(\alpha-\frac{1}{2})}\biggl( \int_{I_m^*}  \biggl( \int_{I_m^{**}} [|\ift[[f\chi_{I_m}(r)]^\alpha]|_H* \omega_N (\pm s)]^p r^{d-1} dr \biggr)^{\frac{2}{p}} ds \biggr)^{1/2},
\end{split}
\end{equation}
where we applied Cauchy-Schwarz inequality for the $s$-integral.

Then by Minkowski's inequality, Young's inequality, and Plancherel's identity, (\ref{eqn:norm3}) is bounded by 
\begin{align*}
&2^{-m(\alpha-\frac{1}{2})} \biggl( \int_{0}^\infty \norm{\ift[[f\chi_{I_m}(r)]^\alpha]}_{L^2(\R,H)}^p r^{d-1} dr \biggr)^{1/p} \\
&\les2^{-m(\alpha-\frac{1}{2})} \norm{f\chi_{I_m}}_\lpt.
\end{align*}

\subsection{Proof of Proposition \ref{prop:S}}
We may assume that the function $\ve{f}$ is supported on $I_m$. To show that the estimate holds for $p=2$, it is enough to show that 
$$\norm{T^\alpha_{j,m}f_j}_{L^2( \mu_d,H)} \les \norm{f_j}_{L^2(\mu_d,L_t(H))}, $$
uniformly in $j$. But this easily follows from Plancherel's identity and Cauchy-Schwarz inequality since $\norm{\han[R^\alpha_{2^jt}]}_{L^\infty} \leq 1$.

Thus it suffices to prove a weak type inequality for $p=1$, namely that the inequality
\begin{equation} \label{eqn:weak1}
\mu_d \left( \{  r\in {I_m^{**}} : |\ve{S}^\alpha_{m}\ve{f}|_{l^2(H)} > \lambda  \} \right) \leq \frac{C}{\lambda} \bbnorm{\ve{f}}_{L^1(\mu_d,B)}
\end{equation}
holds for $\lambda>0$ and $B=l^2(L_t(H))$ by a vector valued version of the Marcinkiewicz interpolation theorem. 

We follow the usual strategy for proving weak type inequalities. We apply the Calder\'{o}n-Zygmund decomposition, Proposition \ref{prop:cz}, to get $\ve{f} = \ve{g} + \ve{b} = \ve{g} + \sum_\nu \ve{b_\nu}$, where $\ve{b}_\nu = \{ b_{\nu,j} \}_j$ is supported on $J_\nu \subset I_m$ and has cancellation. Let us denote by $J_\nu^*$ the interval with the same center as $J_\nu$ and twice its length and by $\Omega$ the union of $J_\nu^*$. 

Then (\ref{eqn:weak1}) for $\ve{g}$ can be shown as usual by the $L^2(\mu_d, B)$ boundedness of $\ve{S}^\alpha_m$. In addition, (\ref{eqn:weak1}) for $\ve{b}$ reduces to 
\begin{equation} \label{eqn:weak}
\mu_d \left( \{  r\in {I_m^{**}}\setminus \Omega : |\ve{S}^\alpha_{m}\ve{b}|_{l^2(H)} > \lambda  \} \right) \leq \frac{C}{\lambda} \bbnorm{\ve{f}}_{L^1(\mu_d,B)}.
\end{equation}
The left hand side of (\ref{eqn:weak}) is bounded by
\begin{equation} \label{eqn:weak3}
\lambda^{-1} \int_{{I_m^{**}}\setminus \Omega} |\ve{S}^\alpha_{m}\ve{b}(r)|_{l^1(H)} d\mu_d(r) \leq \lambda^{-1} \sum_\nu \sum_j \int_{{I_m^{**}}\setminus J_\nu^*} |S^\alpha_{j,m} b_{\nu,j}(r)|_{H} d\mu_d(r).
\end{equation}

Let us denote the integral on the right hand side of \eqref{eqn:weak3} by $\mathcal{I}_{j,\nu}$. We claim that there is $\epsilon>0$ such that
\begin{equation} \label{eqn:claimS}
\mathcal{I}_{j,\nu} \les \min (2^j|J_\nu|, [2^j |J_\nu|]^{-\epsilon}) \norm{b_{\nu,j}}_{L^1(\mu_d,L^2_t(H))}.
\end{equation}
Then (\ref{eqn:claimS}) implies
\begin{align*}
\sum_j \mathcal{I}_{j,\nu} \les \sum_j \min (2^j|J_\nu|, [2^j |J_\nu|]^{-\epsilon}) \bbnorm{\ve{b_\nu}}_{L^1(\mu_d,B)} \les\bbnorm{\ve{b_\nu}}_{L^1(\mu_d,B)}.
\end{align*}
Then (\ref{eqn:weak3}) is bounded by
\begin{equation*}
C \lambda^{-1} \sum_\nu \bbnorm{\ve{b_\nu}}_{L^1(\mu_d,B)}
\les \sum_\nu \mu_d (J_\nu) \les \lambda^{-1} \bbnorm{\ve{f}}_{L^1(\mu_d,B)},
\end{equation*}
as desired by Proposition \ref{prop:cz}.

\begin{proof}[Proof of the claim (\ref{eqn:claimS})]
By the kernel estimate \eqref{eqn:kernel},
\begin{align*}
|S^\alpha_{j,m} b_{\nu,j}(r)|_{H} &\les \chi_{{I_m^{**}}}(r) \int_{J_\nu} 
\frac{2^{jd} W[b_{\nu,j}(s)] (2^j(\pm r \pm s)) d\mu_d(s)}{[(1+2^jr)(1+2^js)]^{(d-1)/2}} \\
&\les \frac{\chi_{{I_m^{**}}}(r)}{2^{m(d-1)}}  \int_{J_\nu} 
2^j W[b_{\nu,j}(s)] (2^j(\pm r \pm s)) d\mu(s)
\end{align*}
using $r\sim s \sim 2^m$. Then 
\begin{equation}\label{eqn:mon}
\begin{split}
\mathcal{I}_{j,\nu} &\les \int_{J_\nu} \int_{{I_m^{**}}\setminus J_\nu^*} 2^j W[b_{\nu,j}(s)] (2^j(\pm r \pm s)) dr d\mu_d(s) \\
&\les \int_{J_\nu} \int_{|x|\geq |J_\nu|/2} 2^j W[b_{\nu,j}(s)] (2^jx) dx d\mu_d(s)\\
&\les \int_{J_\nu} \int_{|x|\geq 2^j|J_\nu|/2} W[b_{\nu,j}(s)] (x) dx d\mu_d(s).\\
&\les \int_{J_\nu} \int_{|x|\geq 2^j|J_\nu|/2} \frac{|\ift[b^\alpha_{\nu,j} (s)]|_H*\omega_N(x)}{(1+|x|)^\alpha} + \frac{|b_{\nu,j}(s)|_{L^2_t(H)}}{(1+|x|)^{\alpha+1} } dx d\mu_d(s).
\end{split}
\end{equation}
For the second inequality, we used the fact that $$|\pm r \pm s| \geq |r-s| \geq |J_\nu|/2,$$ whenever $r\in {I_m^{**}}\setminus J_\nu^*$ and $s\in J_\nu$. For the last inequality, we used Proposition \ref{prop:Wf}.

Choose $\epsilon = (\alpha-1/2)/2$. Then the main term of (\ref{eqn:mon}) is bounded by 
\begin{equation} \label{eqn:argue}
\begin{split}
&C (2^j |J_\nu|)^{-\epsilon} \int_{J_\nu} \int_{\R}  \frac{|\ift[b^\alpha_{\nu,j} (s)]|_H*\omega_N(x)}{(1+|x|)^{\alpha-\epsilon}} dxd\mu_d(s)   \\
\les& (2^j |J_\nu|)^{-\epsilon} \int_{J_\nu}  \norm{|\ift[b^\alpha_{\nu,j} (s)]|_H*\omega_N}_{L^2(\R)} d\mu_d(s) \\
\les& (2^j |J_\nu|)^{-\epsilon} \int_{J_\nu}  \norm{b_{\nu,j} (s)}_{L^2_t(H)} d\mu_d(s) =  (2^j |J_\nu|)^{-\epsilon}   \norm{b_{\nu,j}}_{L^1(\mu_d,L^2_t(H))}.
\end{split}
\end{equation}
where we have applied Cauchy-Schwarz, Young's inequality, and Plancherel's identity. The estimation of the error term of (\ref{eqn:mon}) is straightforward.

Next we seek an inequality which is good when $2^j|J_\nu|$ is small. Let $s_\nu$ be the center of the interval $J_\nu$. Using the cancellation of $b_{\nu,j}$, we may write
\begin{equation*}
S^\alpha_{j,m} b_{\nu,j}(r) = \chi_{{I_m^{**}}}(r) \int_{J_\nu} 2^{jd}
[K(2^jr,2^js)-K(2^jr,2^js_\nu)][b_{\nu,j}(s)] d\mu_d (s).
\end{equation*}

Let $f$ be a $L^2_t(H)$-valued function. Then $[K(2^jr,2^js)-K(2^jr,2^js_\nu)][f]$ can be written as
\begin{equation*}
\begin{split}
&\int \fto[\mathcal{K}[f]](\rho)  B_d(2^jr\rho) [B_d(2^js\rho)-B_d(2^js_\nu\rho)] d\mu_d(\rho) \\
=& 2^j(s-s_\nu) \int_0^1 \left[\int \fto[\mathcal{K}[f]](\rho)  B_d(2^jr\rho)  B_d'(2^js(\tau)) \rho^d d\rho \right] d\tau, \\
\end{split}
\end{equation*}
where $s(\tau) = s_\nu + \tau(s-s_\nu)$. 

We remark that $B_d'$ has the same asymptotic expansion as $B_d$ given in  \eqref{eqn:asymptotic}, thus the expression inside of the bracket follows the same estimate for $K(2^jr,2^j s(\tau))[f]$. Since $r\sim s(\tau) \sim 2^m$ for $\tau \in [0,1]$, an argument similar to the proof of the kernel estimate shows 
\begin{align*}
&2^{jd}|[K(2^jr,2^js)-K(2^jr,2^js_\nu)][f]|_H  \\
\les &2^j |J_\nu| \int_0^1 \frac{2^{jd}W[f](2^j (\pm r \pm s(\tau)))}{[(1+2^jr)(1+2^j s(\tau))]^{(d-1)/2}} d\tau \\
\les &\frac{2^j |J_\nu|}{2^{m(d-1)}} \int_0^1 2^jW[f](2^j (\pm r \pm s(\tau))) d\tau.
\end{align*}

Thus, 
\begin{equation}\label{eqn:mon2}
\begin{split}
\mathcal{I}_{j,\nu} &\les 2^j |J_\nu| \int_0^1 \int_{J_\nu} \int_{{I_m^{**}}\setminus J_\nu^*} 2^jW[b_{\nu,j}(s)](2^j (\pm r \pm s(\tau))) dr d\mu(s)  d\tau \\
&\les 2^j |J_\nu|\int_{J_\nu} \int_{\R} 2^j W[b_{\nu,j}(s)] (2^jx) dx d
\mu_d(s) \\
&\les 2^j |J_\nu| \norm{b_{\nu,j}}_{L^1(\mu_d,L^2_t(H))},
\end{split}
\end{equation}
where the last inequality follows by a change of variable and arguing as in (\ref {eqn:argue}).
\end{proof}

\section*{Appendix: A note on $g$-function}
\label{sec:appendix}
In this section, we shall give a proof of $L^p$ bounds of the square function $g_{\Phi}$ by Calder\'{o}n-Zygmund theory in the vector valued setting. We shall obtain an analogue of the gradient condition for the Hankel convolution operator $T^K f := K \con f$. The materials to be discussed are quite standard and well-known, but Lemma \ref{lem:gradient} does not seem to appear in the literature.

\subsection*{Calder\'{o}n-Zygmund decomposition}
Let $B$ be a Banach space, and $L^p(\mu_d,B)$ be the Bochner space, i.e.
$$\norm{f}_{L^p(\mu_d,B)}^p = \int_0^\infty |f(r)|_B^p d\mu_d(r),$$
for strongly measurable functions $f:\R_+ \to B$. Then there is a
Calder\'{o}n-Zygmund decomposition for the functions $f\in
L^1(\mu_d,B)$.
\begin{prop} \label{prop:cz} Let $f\in L^1(\mu_d,B)$ and $\lambda >0$. Then
there are dyadic intervals $J_{\nu}$ with disjoint interiors and a decomposition
$$f = g+b = g+\sum_{\nu} b_{\nu}$$ such that the following properties hold.
\ben
\item $|g(s)|_B \leq  C\lambda$ a.e. $s$ and $\norm{g}_{L^1(\mu_d,B)} \leq
\norm{f}_{L^1(\mu_d,B)}$.
\item $b_{\nu}$ is supported on $J_{\nu}$ and $\int b_{\nu}(s) d\mu_d(s) = 0$.
\item $\int |b_{\nu}(s)|_B d\mu_d(s) \les \lambda \mu_d(J_{\nu})$.
\item $\sum_{\nu} \mu_d(J_{\nu}) \les \lambda^{-1} \norm{f}_{L^1(\mu_d,B)}$.
\een
\end{prop}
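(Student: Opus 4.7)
The plan is to adapt the classical Calder\'on-Zygmund stopping-time argument to the doubling measure space $(\Rp,\mu_d)$ in the Banach-valued setting. The first step is to fix the natural dyadic intervals $J_{j,k} = [k2^{-j},(k+1)2^{-j})$ for $j\in \Z$ and $k\in \N\cup\{0\}$, and to verify that $\mu_d$ is doubling on this dyadic system: each dyadic $J$ and its parent $\hat J$ satisfy $\mu_d(\hat J) \leq C_d \mu_d(J)$. This is a direct computation from $\mu_d(J_{j,k}) = d^{-1}[(k+1)^d - k^d]\,2^{-jd}$, separating the case $k=0$ (where the ratio is exactly $2^d$) from $k\geq 1$ (where the ratio is uniformly bounded).

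Next, I would form the $B$-valued dyadic maximal function $M_d f(r) = \sup_{J\ni r} \mu_d(J)^{-1}\int_J |f|_B\,d\mu_d$, set $\Omega = \{r : M_d f(r) > \lambda\}$, and extract the collection $\{J_\nu\}$ of maximal dyadic intervals on which the mean of $|f|_B$ exceeds $\lambda$; these have pairwise disjoint interiors and cover $\Omega$. Define $g(r)=f(r)$ for $r\notin \Omega$ and $g(r) = \mu_d(J_\nu)^{-1}\int_{J_\nu} f\,d\mu_d$ on each $J_\nu$, and let $b_\nu = (f-g)\chi_{J_\nu}$. Property (ii) is built into the construction; (iii) follows by comparing the mean of $|f|_B$ over $J_\nu$ with the mean $\leq \lambda$ over its parent via doubling, since $\int_{J_\nu}|b_\nu|_B\,d\mu_d \leq 2\int_{J_\nu}|f|_B\,d\mu_d \leq 2C_d \lambda\,\mu_d(J_\nu)$; and (iv) is immediate from Chebyshev applied to the selection criterion $\mu_d(J_\nu)^{-1}\int_{J_\nu}|f|_B\,d\mu_d > \lambda$.

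Property (i) is slightly more delicate. On $\Omega$ the pointwise bound $|g(r)|_B \leq C\lambda$ again comes from parent-doubling. Off $\Omega$ I would invoke a vector-valued Lebesgue differentiation theorem: for strongly measurable $f\in L^1(\mu_d,B)$, one has $\mu_d(J)^{-1}\int_J f\,d\mu_d \to f(r)$ a.e.\ as $J\ni r$ shrinks along the dyadic system, which forces $|f(r)|_B \leq M_d f(r) \leq \lambda$ a.e.\ on $\Omega^c$. I expect this differentiation step to be the main technical point: it reduces to the scalar case because strong measurability realizes $f$ as an a.e.\ limit of countably-valued simple functions, so one may transfer the scalar Lebesgue differentiation theorem term-by-term with errors absorbed using $\varepsilon$-approximation and the doubling of $\mu_d$. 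The $L^1$ bound $\norm{g}_{L^1(\mu_d,B)} \leq \norm{f}_{L^1(\mu_d,B)}$ is then an immediate triangle inequality since the local averages do not increase the $L^1$ norm.
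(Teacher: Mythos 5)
Your proof is correct, but it takes a genuinely different route from the paper's. The paper does not build a dyadic system adapted to $\mu_d$ at all: it splits $f=\sum_j f\chi_{I_j}$ over the annuli $I_j=[2^j,2^{j+1})$, on each of which the weight $r^{d-1}$ is comparable to the constant $2^{j(d-1)}$, sets $F_j(r)=2^{-j(d-1)}f(r)r^{d-1}\chi_{I_j}(r)$, applies the classical Lebesgue-measure Calder\'on--Zygmund decomposition to each $B$-valued $F_j$, and pulls the decomposition back through the weight before summing in $j$. That reduction buys two things: it inherits the maximal-function, differentiation, and stopping-time machinery wholesale from the unweighted case, and it automatically produces intervals $J_\nu$ contained in single annuli $I_j$ --- a feature the paper quietly relies on later (in the proof of Proposition \ref{prop:S} the $J_\nu$ are taken inside $I_m$ and one uses $s\sim 2^m$ on $J_\nu$). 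Your direct stopping-time argument on the doubling space $(\Rp,\mu_d)$ is more self-contained and generalizes to any doubling measure, and your verification of parent-doubling for $\mu_d$ on the standard dyadic grid is the right computation; but note that your selected maximal cubes need not lie in $\supp f$, so to recover the localized form used downstream one would restrict the stopping time to dyadic subintervals of $I_m$. Two small simplifications are available in your argument: for property (i) off $\Omega$ you only need differentiation of the scalar function $|f|_B\in L^1(\mu_d)$, not the full vector-valued Lebesgue differentiation theorem; and since your averages are taken along a dyadic filtration, a.e.\ convergence is a martingale convergence statement and does not require the doubling property at all.
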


A proof may be found in \cite{GaSe}, but we shall give a sketch of the proof. Split $f = \sum_j f_j$ where $f_j = f
\chi_{I_j}$ and $I_j = [2^j,2^{j+1})$. Define $F_j(r)$ by the equation
\begin{equation} \label{eqn:CZ}
2^{j(d-1)} F_j(r) = f_j(r) r^{d-1}
\end{equation}
and perform the usual Calder\'{o}n-Zygmund
decomposition for the $B$-valued function $F_j$ by $F_j = G_j + B_j$. Then we
obtain $f_j = g_j + b_j$, where $g_j$ and $b_j$ are given by equations similar to \eqref{eqn:CZ}, and then sum in $j$.

\subsection*{The square function $g_{\Phi}$}
Consider the Hankel convolution operator
\begin{equation*}
T^Kf(r) = K\con f(r)=\int_0^\infty \tau^s K(r) f(s) d\mu_d(s),
\end{equation*}
where $\tau^s$ is the generalized translation
given by $\han[\tau^s f](\rho) = B_d(s\rho) \han{f}(\rho)$. See \eqref{eqn:gent1} and \eqref{eqn:gent2} for an explicit formula for $\tau^s$.

As in the Euclidean case, see e.g. \cite{RRT}, one may extend this to the vector-valued setting. Let $A$ and $B$ Banach spaces, and $K:\R_+ \to \mathcal{L}(A,B)$ be an operator valued
kernel, where $\mathcal{L}(A,B)$ is the space of bounded linear operators from $A$ to
$B$. 

\begin{lem}[Calder\'{o}n-Zygmund] Suppose that $T^K$ is a bounded operator from
$L^r(\mu_d,A)$ to $L^r(\mu_d,B)$ for some $r$, $1\leq r\leq \infty$. In addition,
suppose that
\begin{equation} \label{eqn:horh}
\int_{|r-s| > 2|s-\bar{s}|} |\tau^s K(r) - \tau^{\bar{s}} K(r)|_{\mathcal{L}(A,B)}
d\mu_d(r) \leq C.
\end{equation}
Then $T^K$ is bounded from $L^p(\mu_d,A)$ to $L^p(\mu_d,B)$ for $1<p<\infty$,
and there is a weak-type inequality
\begin{equation*}
 \mu_d (\{ r\in \R_+: |T^K f(r)|_B > \lambda\} )\leq \frac{C}{\lambda}
\norm{f}_{L^1(\mu_d,A)}. 
\end{equation*}
\end{lem}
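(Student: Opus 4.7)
The proof will follow the standard Calder\'on--Zygmund template, adapted to the Hankel convolution and operator-valued kernels. Given $f\in L^1(\mu_d,A)$ and $\lambda>0$, apply Proposition \ref{prop:cz} at level $\lambda$ to decompose $f=g+\sum_\nu b_\nu$. Let $J_\nu^\sharp$ be the interval concentric with $J_\nu$ of length $5|J_\nu|$, arranged so that whenever $r\notin J_\nu^\sharp$ and $s\in J_\nu$ the Hörmander condition \eqref{eqn:horh} can be invoked with $\bar{s}=s_\nu$ (the center of $J_\nu$), since $|r-s_\nu|>5|J_\nu|/2$ and $|s-s_\nu|\leq|J_\nu|/2$ force $|r-s|>2|s-s_\nu|$. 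Put $\Omega=\bigcup_\nu J_\nu^\sharp$; property (iv) gives $\mu_d(\Omega)\lesssim\lambda^{-1}\|f\|_{L^1(\mu_d,A)}$.

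The good part is handled by properties (i)--(ii), which yield $\|g\|_{L^r(\mu_d,A)}^r\lesssim\lambda^{r-1}\|f\|_{L^1(\mu_d,A)}$ (interpreted suitably at $r=\infty$), so the strong $(r,r)$ hypothesis and Chebyshev produce $\mu_d(\{|T^Kg|_B>\lambda/2\})\lesssim\lambda^{-1}\|f\|_{L^1(\mu_d,A)}$. For the bad part off $\Omega$, the vanishing integral of $b_\nu$ lets one write
\begin{equation*}
T^Kb_\nu(r)=\int_{J_\nu}[\tau^sK(r)-\tau^{s_\nu}K(r)]\,b_\nu(s)\,d\mu_d(s);
\end{equation*}
Fubini, \eqref{eqn:horh}, and property (iii) then bound $\int_{\R_+\setminus\Omega}|T^Kb|_B\,d\mu_d$ by a constant times $\|f\|_{L^1(\mu_d,A)}$, and another application of Chebyshev completes the weak-type $(1,1)$ inequality.

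The strong $L^p$ bounds split into two ranges. For $1<p\leq r$, Marcinkiewicz interpolation (in its vector-valued form) between the weak $(1,1)$ bound just obtained and the hypothesized strong $(r,r)$ bound yields the conclusion. For $r\leq p<\infty$, I would appeal to duality together with the symmetry of the Hankel translation: the inversion formula
\begin{equation*}
\tau^sK(r)=\int_0^\infty B_d(r\rho)B_d(s\rho)\,\han K(\rho)\,d\mu_d(\rho)
\end{equation*}
makes $\tau^sK(r)=\tau^rK(s)$ manifest, so the adjoint $T^{K^*}$, whose kernel (as an integral operator on functions of $s$) is $\tau^sK(r)^*\in\mathcal L(B^*,A^*)$, satisfies by operator-norm duality the exact same condition \eqref{eqn:horh} after the label swap $r\leftrightarrow s$. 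Applying the previous paragraph to $T^{K^*}$ and interpolating with its strong $(r',r')$ bound yields strong $(p',p')$ bounds for $T^{K^*}$ on $1<p'\leq r'$, which give the desired strong $(p,p)$ bounds for $T^K$ on $r\leq p<\infty$ after dualizing back.

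The main technical point is the cancellation estimate for the bad part: one must calibrate the enlargement factor in $J_\nu^\sharp$ to the constant $2$ appearing in \eqref{eqn:horh}, which is pure bookkeeping. The only genuinely Hankel-specific ingredient is the kernel symmetry $\tau^sK(r)=\tau^rK(s)$, which is what lets the one-sided condition \eqref{eqn:horh} drive the duality step and thereby cover the full range $1<p<\infty$.
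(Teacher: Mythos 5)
The paper does not actually supply a proof of this lemma; it merely cites Proposition~\ref{prop:cz} and refers to the general Calder\'on--Zygmund theory on spaces of homogeneous type. Your argument is a correct filling-in of exactly that standard route: Calder\'on--Zygmund decomposition of $f$ via Proposition~\ref{prop:cz}, good-part handling via the hypothesized $L^r$ bound, bad-part handling via cancellation and the H\"ormander condition \eqref{eqn:horh}, Marcinkiewicz interpolation for $1<p\le r$, and duality for $r\le p<\infty$. The one genuinely Hankel-specific point you identify --- that the symmetry $\tau^s K(r)=\tau^r K(s)$, visible from $\tau^s K(r)=\int_0^\infty B_d(r\rho)B_d(s\rho)\han K(\rho)\,d\mu_d(\rho)$, makes the adjoint kernel $[\tau^s K(r)]^*$ satisfy the same condition \eqref{eqn:horh} with the same constant (since $\lvert T^*\rvert_{\mathcal{L}(B^*,A^*)}=\lvert T\rvert_{\mathcal{L}(A,B)}$) --- is indeed what lets the one-sided hypothesis cover the full range, and is the correct observation. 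Two small points worth flagging for completeness: one implicitly uses that $\mu_d$ is doubling on $\Rp$ (so $\mu_d(J_\nu^\sharp)\lesssim\mu_d(J_\nu)$), and the final dualization step requires that $L^{p'}(\mu_d,B^*)$ is norming for $L^p(\mu_d,B)$, which is immediate here since $A,B$ are Hilbert spaces (or more generally have the Radon--Nikodym property). Neither affects correctness in the context at hand.
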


This can be shown by using Proposition \ref{prop:cz}, or by the general theory for spaces of homogeneous type. One may ask an analogue of the gradient condition for the H\"{o}rmander condition \eqref{eqn:horh}. 
\begin{lem}\label{lem:gradient} The following condition
\begin{equation}\label{eqn:gradh}
|K'(r)|_{\mathcal{L}(A,B)} \leq C r^{-(d+1)}
\end{equation}
implies (\ref{eqn:horh}).
\end{lem}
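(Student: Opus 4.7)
Set $h := \abs{s - \bar s}$. The plan is to apply the mean value theorem in the translation parameter $\sigma$, reduce the H\"ormander-type integral \eqref{eqn:horh} to an estimate for the Hankel translate of $u \mapsto u^{-(d+1)}$, and then handle the latter by a geometric inclusion argument.

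Starting from the integral representation $\tau^\sigma K(r) = c_d \int_0^\pi K(u(\sigma,\theta,r)) \sin^{d-2}\theta\, d\theta$ with $u(\sigma,\theta,r) = \sqrt{r^2 + \sigma^2 - 2r\sigma\cos\theta}$, one differentiates in $\sigma$ under the integral. Using $\abs{\partial_\sigma u} = \abs{\sigma - r\cos\theta}/u \leq 1$ (which follows from $u^2 = (\sigma - r\cos\theta)^2 + r^2\sin^2\theta$) together with the hypothesis \eqref{eqn:gradh}, one obtains the pointwise bound
\begin{equation*}
\babs{\frac{d}{d\sigma} \tau^\sigma K(r)}_{\mathcal{L}(A,B)} \leq C\, \tau^\sigma[u^{-(d+1)}](r).
\end{equation*}
Writing $\tau^s K(r) - \tau^{\bar s} K(r) = \int_{\bar s}^s \frac{d}{d\sigma}\tau^\sigma K(r)\, d\sigma$ and applying Fubini, the H\"ormander condition reduces to the uniform bound
\begin{equation*}
\int_{\abs{r-s}>2h} \tau^\sigma[u^{-(d+1)}](r)\, d\mu_d(r) \leq C/h, \qquad \sigma \text{ between } \bar s \text{ and } s,
\end{equation*}
since the outer $\sigma$-integration over an interval of length $h$ then contributes only a factor of $h$.

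For this key estimate, first take $d \geq 2$ an integer. Then $\tau^\sigma[u^{-(d+1)}](r)$ coincides with the spherical average $\mathrm{avg}_{\omega \in S^{d-1}} \abs{x - \sigma\omega}^{-(d+1)}$ at any $x \in \R^d$ with $\abs{x}=r$, and $d\mu_d(r)$ identifies with Lebesgue measure on $\R^d$ for radial integrands. Swapping the $r$-integration with the $\omega$-average reduces the task to the uniform-in-$\omega$ bound
\begin{equation*}
\int_{\{x:\abs{\abs{x}-s}>2h\}} \abs{x - \sigma\omega}^{-(d+1)} dx \leq C/h,
\end{equation*}
which follows from the geometric inclusion $\{x:\abs{\abs{x}-s}>2h\} \subset \{x:\abs{x-\sigma\omega}>h\}$ (valid since $\abs{x-\sigma\omega} \geq \abs{\abs{x}-\sigma} \geq \abs{\abs{x}-s}-h > h$) combined with the elementary calculation $\int_{\abs{y}>h} \abs{y}^{-(d+1)} dy = C/h$. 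For non-integer $d > 1$ the same inclusion can be implemented directly on the integral representation with the $\sin^{d-2}\theta$ weight.

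The main obstacle is this last step. The region $\{r:\abs{r-s}>2h\}$ corresponds, under the radial identification, to a spherical shell complement in $\R^d$ rather than to a Euclidean ball complement at $\sigma\omega$, so a naive rotation-invariance argument is unavailable; indeed, any crude bound such as $\abs{\tau^s K(r)-\tau^{\bar s} K(r)} \les h \abs{r-s}^{-(d+1)}$ would lose a factor of $(s/h)^{d-1}$ in the regime $h \ll s$ and fail. The resolution is to notice that the spherical shell complement still forces the Euclidean distance from $\sigma\omega$ to be at least $h$ for every $\omega$, which is exactly the scale compatible with the endpoint decay $u^{-(d+1)}$.
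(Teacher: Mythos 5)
Your proposal is correct and follows essentially the same route as the paper: a mean value theorem in the translation parameter, using $\abs{\partial_\sigma(r,\sigma)_\theta}\leq 1$ to pass the gradient hypothesis through the Hankel translation, followed by a support estimate for the translate of $\chi_{\{\abs{\cdot-s}>2h\}}$. The paper reaches the same support bound a bit more cleanly for all real $d>1$ by invoking the self-adjointness identity $\int\tau^\sigma f\cdot g\,d\mu_d=\int f\cdot\tau^\sigma g\,d\mu_d$ together with $\tau^\sigma\chi\leq\chi_{[h,\infty)}$ (from the representation \eqref{eqn:gent2}), which is exactly the content of your geometric inclusion in the integer-$d$ picture.
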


Before we turn to the proof of this fact, we give an application
for Littlewood-Paley square functions. Note that the $g$-function defined in Section \ref{sec:Hankel} can be regarded as a vector-valued convolution operator
\begin{equation*}
 g_\Phi f(r) = \han [\Phi (\cdot/t) ] \con f (r),
\end{equation*}
where we regard $\han [\Phi (\cdot/t) ]$ as an operator valued kernel taking values in $\mathcal{L}(H, \tilde{H})$ for $H$-valued functions $f$, where $\tilde{H} = L^2( \Rp, \frac{dt}{t},H)$.

\begin{thm} \label{thm:gfuncH} Let $\Phi \in \s(\Rp)$ with $\Phi(0)=0$, and $\tilde{H}$ and $g_\Phi$ be as described above. Then for $1<p<\infty$,
$$C_p^{-1} \norm{f}_{L^p(\mu_d, H)} \leq \norm{g_\Phi f}_{L^p(\mu_d,\tilde{H})} \leq
C_p \norm{f}_{L^p(\mu_d, H)}.$$
\end{thm}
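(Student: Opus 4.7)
The plan is to realize $g_\Phi$ as a vector-valued Hankel convolution, feed it into the Calder\'{o}n--Zygmund machinery stated in this appendix, and then recover the matching lower bound via a Calder\'{o}n reproducing formula and $L^p$--$L^{p'}$ duality.

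For the upper bound, I would write $g_\Phi f = T^K f$, where the operator-valued kernel $K(r)\in \mathcal{L}(H,\tilde H)$ is scalar multiplication by $K_t(r):= \han[\Phi(\cdot/t)](r)$. A change of variables in the defining Hankel integral yields the scaling identity $K_t(r)=t^d \Psi(tr)$ with $\Psi:=\han[\Phi]\in \s(\Rp)$. The $L^2$-bound is then immediate from Plancherel for $\han$:
\begin{equation*}
\norm{g_\Phi f}_{L^2(\mu_d)}^2 = \int_0^\infty \biggl(\int_0^\infty |\Phi(\rho/t)|^2 \frac{dt}{t}\biggr)|\han f(\rho)|_H^2 \, d\mu_d(\rho) = c_\Phi \norm{f}_{L^2(\mu_d,H)}^2,
\end{equation*}
where $c_\Phi := \int_0^\infty |\Phi(s)|^2 \, ds/s$ is finite thanks to $\Phi(0)=0$ and $\Phi \in \s(\Rp)$.

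Next I would invoke Lemma \ref{lem:gradient}. Differentiating the scaling identity gives $K_t'(r)=t^{d+1}\Psi'(tr)$, so after the substitution $u=tr$,
\begin{equation*}
|K'(r)|_{\mathcal{L}(H,\tilde H)}^2 = \int_0^\infty t^{2(d+1)}|\Psi'(tr)|^2 \frac{dt}{t} = r^{-2(d+1)} \int_0^\infty u^{2d+1} |\Psi'(u)|^2 \, du.
\end{equation*}
The last integral is finite: $\Psi$ is rapidly decreasing at infinity, while near the origin $B_d$ (and hence $\Psi$) is smooth and even, so $\Psi'$ is bounded and the hypothesis $d>1$ takes care of $u^{2d+1}$ at $u=0$. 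Lemma \ref{lem:gradient} then supplies the H\"ormander condition \eqref{eqn:horh}, and the vector-valued Calder\'{o}n--Zygmund lemma delivers $\norm{g_\Phi f}_{L^p(\mu_d,\tilde H)}\leq C_p \norm{f}_{L^p(\mu_d,H)}$ for $1<p<\infty$.

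For the lower bound, I would exploit the Calder\'{o}n reproducing identity
\begin{equation*}
c_\Phi \inn{f,g}_{L^2(\mu_d,H)} = \int_0^\infty \inn{\Phi_t f,\Phi_t g}_{L^2(\mu_d,H)} \frac{dt}{t},
\end{equation*}
which is again Plancherel applied to the Hankel multiplier $\int_0^\infty \Phi_t^*\Phi_t \, dt/t = c_\Phi I$ whose symbol is the constant $c_\Phi$. Cauchy--Schwarz in $t$ followed by H\"older in $r$ yields
\begin{equation*}
|\inn{f,g}_{L^2(\mu_d,H)}| \leq c_\Phi^{-1}\norm{g_\Phi f}_{L^p(\mu_d,\tilde H)}\norm{g_\Phi g}_{L^{p'}(\mu_d,\tilde H)}.
\end{equation*}
Applying the upper bound already proved, now at exponent $p'$, and taking the supremum over $g$ with $\norm{g}_{L^{p'}(\mu_d,H)}\leq 1$, closes the argument by duality.

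The main obstacle is the gradient estimate, i.e.\ verifying $\int_0^\infty u^{2d+1}|\Psi'(u)|^2 du <\infty$ with a constant depending only on $\Phi$. Both the hypothesis $d>1$ and the smoothness/evenness of $B_d$ near the origin are essential here, and this is the only place where the structural features of the Hankel setting enter in a non-cosmetic way. Everything else --- Plancherel, the reproducing identity, duality, and the standard $L^2\cap L^p$ density passage needed to interpret the formulas on general $f,g$ --- is routine.
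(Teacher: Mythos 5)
Your argument is correct and follows essentially the same route as the paper's: the $L^2$ identity via Plancherel for $\han$, the gradient bound $|K'(r)|_{\mathcal{L}(H,\tilde H)}\les r^{-(d+1)}$ obtained from the scaling $K_t(r)=t^d\han[\Phi](tr)$, Lemma \ref{lem:gradient} to supply the H\"ormander condition, and the vector-valued Calder\'on--Zygmund lemma for the upper bound; the paper's ``polarization identity'' step is the same duality-via-reproducing-formula argument you spell out for the lower bound. The only small imprecision is your remark that $d>1$ is ``essential'' for the convergence of $\int_0^\infty u^{2d+1}|\Psi'(u)|^2\,du$; in fact convergence at $u=0$ only needs $d>-1$, since $\Psi'$ is bounded there --- the hypothesis $d>1$ enters the paper for other reasons.
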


\begin{proof}
We prove the second inequality. The first inequality follows from the
second inequality via the polarization identity. We may assume that $H=l^2$ and
$f=\{f_j\}_j$, since, for instance, we may write a $H$-valued function $f$ as the sum
$f(x)=\sum_j f_j(x) e_j$ for an orthonormal basis $\{e_j\}_j$, then using
Parseval's identity. 

First we consider the case $p=2$. By Plancherel's identity,
\begin{align*}
\norm{g(f)}_{L^2(\mu_d,\tilde{H})}^2 &=  \int_0^\infty \sum_j \norm{\han[\Phi(\cdot/t)]
\con f_j}_{L^2(\mu_d)}^2 \frac{dt}{t} \\
&= \biggl( \int_0^\infty |\Phi (t)|^2 \frac{dt}{t} \biggr) \sum_j
\norm{f_j}_{L^2(\mu_d)}^2 = C \norm{f}_{L^2(\mu_d,H)}^2.
\end{align*}

Next we verify \eqref{eqn:gradh}. Let $K=\han [\Phi(\cdot/t)]=t^d \han[\Phi](t\cdot).$ Since $\Phi \in S(R_+)$,
$$|K'(r)|_{\mathcal{L}(H,\tilde{H})} = |K'(r)|_{L^2(\Rp,\frac{dt}{t})} = \biggl( \int_0^\infty
|t^{d+1} \han[\Phi]'(tr)|^2 \frac{dt}{t} \biggr)^{1/2} = \frac{C}{r^{d+1}}.$$
\end{proof}

For the proof of Lemma \ref{lem:gradient}, we need explicit
formulae for the generalized translation. In what follows, we shall ignore
multiplicative constants, and write $A=B$ if $A=CB$ for a constant $C$ depends
only on $d$. One has
\begin{equation} \label{eqn:gent1}
\tau^s f(r) = \int_{0}^{\pi} f( (r,s)_{\theta} ) d\nu(\theta),
\end{equation}
where $(r,s)_{\theta} = (r^2 + s^2 - 2rs\cos \theta )^{1/2}$ and $d\nu(\theta)$
is a probability measure on $[0,\pi]$. One may also write 
\begin{equation} \label{eqn:gent2}
\tau^s f(r) = \int_{|r-s|}^{r+s} f(t) dW_{r,s}(t),
\end{equation}
where $dW_{r,s}(t)$ is a probability measure on $[|r-s|,r+s]$. See \cite{GoSt}.

\begin{proof}[Proof of Lemma \ref{lem:gradient}]
This observation is a combination of estimates from \cite{GoSt}, where an
analogue of the H\"{o}rmader-Mihlin multiplier theorem for Hankel multipliers is
proved. We shall denote by $|\cdot|$ the operator norm $|\cdot|_{\mathcal{L}(A,B)}$. By (\ref{eqn:gent1}),
\begin{align*}
&\int_{|r-s| \geq 2|s-\bar{s}|} |\tau^s K(r) - \tau^{\bar{s}} K(r)| d\mu_d(r)\\
&= \int_{|r-s| \geq 2|s-\bar{s}|} \abs{ \int_0^\pi K( (r,s)_\theta ) - K(
(r,\bar{s})_\theta ) d\nu(\theta) } d\mu_d(r) \\
&= \int_{|r-s| \geq 2|s-\bar{s}|} \abs{ \int_0^\pi \int_0^1 \frac{d}{dt} [K( (r,
ts+(1-t)\bar{s} )_\theta )] dt d\nu(\theta)  } d\mu_d(r).
\end{align*}

Let $\Phi (t) = (r, ts+(1-t)\bar{s} )_\theta$. Then $|\Phi'(t)| \leq |s-\bar{s}|$ (See \cite{GoSt}). Therefore, the last integral is bounded by
\begin{equation}\label{eqn:lep}
\begin{split}
&|s-\bar{s}| \int_0^1 \int_{|r-s| \geq 2|s-\bar{s}|} \int_0^\pi |K'|( (r,
ts+(1-t)\bar{s} )_\theta ) d\nu(\theta)   d\mu_d(r) dt \\
&\leq |s-\bar{s}| \int_0^1 \int_0^\infty \tau^{ts+(1-t)\bar{s}} |K'|(r) \chi(r)
d\mu_d(r) dt, 
\end{split}
\end{equation}
where $\chi(r)$ is the characteristic function of the set $\{r: |r-s| \geq
2|s-\bar{s}| \}$. 

Next, we use the identity 
$\int \tau^s f(r) g(r) d\mu_d(r) = \int f(r) \tau^s g(r) d\mu_d(r)$ and then
analyse $\tau^{ts+(1-t)\bar{s}} \chi(r)$. It follows by considering
(\ref{eqn:gent2}) that $$\tau^{ts+(1-t)\bar{s}} \chi(r) \leq
\chi_{[|s-\bar{s}|,\infty)}(r),$$ for any $t\in [0,1]$ as was observed in \cite{GoSt}. Thus,
\eqref{eqn:gradh} implies that \eqref{eqn:lep} is bounded by 
\begin{align*}
&|s-\bar{s}| \int_{|s-\bar{s}|}^\infty |K'(r)| d\mu_d(r) \\
&\les |s-\bar{s}| \int_{|s-\bar{s}|}^\infty r^{-(d+1)} r^{d-1} dr  \leq C.
\end{align*}
\end{proof}

\subsection*{Acknowledgements}
The author would like to thank his advisor Andreas Seeger for suggesting the problems, his guidance, and many helpful discussions. The author would like to thank B{\l}a{\.z}ej Wr{\'o}bel for bringing the multivariate Bochner-Riesz type multipliers in Section \ref{sec:Boch} to his attention.

\end{document}